\newtheorem{lemma}{Lemma}
\newtheorem{theorem}[lemma]{Theorem}
\newtheorem{corollary}[lemma]{Corollary}
\newtheorem{example}[lemma]{Example}
\newtheorem{definition}[lemma]{Definition}
\newtheorem{remark}[lemma]{Remark}
\newcommand{\casos}[4]{
\left\{
\begin{array}{ll}
#1 & \mbox{ #2 }\\
#3 & \mbox{ #4 }\\
\end{array}
\right.
}
\newcommand{\trescasos}[6]{
\left\{
\begin{array}{ll}
#1 & \mbox{ #2 }\\
#3 & \mbox{ #4 }\\
#5 & \mbox{ #6 }\\
\end{array}
\right.
}
\newcommand{\quatrecasos}[8]{
\left\{
\begin{array}{ll}
#1 & \mbox{ #2 }\\
#3 & \mbox{ #4 }\\
#5 & \mbox{ #6 }\\
#7 & \mbox{ #8 }\\
\end{array}
\right.
}
\renewcommand\leq{\leqslant}
\renewcommand\geq{\geqslant}
\renewcommand\preceq{\preccurlyeq}
\newcommand\pleq{\preceq}
\title{Numerical Semigroups and Codes}
\date{{\small Chapter 5 of Algebraic Geometry Modeling in Information Theory, \\E. Martinez-Moro (ed.), World Scientific, 2013.}}
\author{Maria Bras-Amor\'os}
\begin{document}

\maketitle

\begin{abstract}
A numerical semigroup is a subset of ${\mathbb N}$ containing 0, closed under addition and with finite complement in ${\mathbb N}$. An important example of numerical semigroup is given by the Weierstrass semigroup at one point of a curve.
In the theory of algebraic geometry codes, Weierstrass semigroups are crucial
for defining bounds on the minimum distance as well as for defining improvements on the dimension of codes. We present these applications and some
theoretical problems related to classification, characterization and counting of
numerical semigroups.
\end{abstract}

\tableofcontents

\section*{Introduction}

Numerical semigroups are probably one of the most simple mathematical objects. However they are involved in very hard (and some very old) problems.
They can also be found in several applied fields such as 
error-correcting codes, cryptography, or combinatorial structures for privacy applications.

In the present chapter we present numerical semigroups with some of 
the related classical problems and we explore their 
importance in the field of algebraic-geometry codes.

The material is divided into two parts. In the first part 
we give a brief introduction to Weierstrass semigroups as 
the paradigmatic example of numerical semigroups, we present some classical problems related to general numerical semigroups, we deal with 
some problems on classification and characterization of numerical semigroups
which have an application to coding theory, and we finally present a conjecture on counting numerical semigroups by their genus.

In the second part we present
one-point algebraic-geometry codes and we 
focus on the applications that numerical semigroups
have for defining bounds on the minimum distance as well as for defining improvements on the dimension of these codes. 
Based on the decoding algorithm for one-point codes
one can deduce sufficient conditions for decoding, and
from these conditions one can define minimal sets of parity checks 
(and so codes with improved correction capability)
either for correcting any kind of error or 
at least for guaranteeing the correction of the so-called generic errors.
The decoding conditions are related to the associated Weierstrass semigroups and so
the improvements can be defined in terms of semigroups.

\section{Numerical semigroups}

\newcommand\alert[1]{{\em #1}\index{#1}}

\subsection{Paradigmatic example: Weierstrass semigroups on algebraic curves}

\nocite{Pretzel,HoLiPe:agc,Stichtenoth:AFFaC,Fulton,Torres:notes,Giulietti:notes}

\subsubsection{Algebraic curves}


Consider a field $K$ and a bivariate polynomial $f(x,y)\in K[x,y]$.
If $\bar K$ is the algebraic closure of $K$, the (plane)
\alert{affine curve} associated to $f$ is the set of points in $\bar K^2$
at which $f$ vanishes. 
%
Now given a {\it homogeneous} polynomial $F(X,Y,Z)\in K[X,Y,Z]$
the (plane)
\alert{projective curve} associated to $F$ is the set of points in 
${\mathbb P}^2(\bar K)$
at which $F$ vanishes. 
We use the notation \alert{${\mathcal X}_F$} to denote it.

From the affine curve 
defined by the polynomial $f(x,y)$ 
of degree $d$ 
we can obtain a projective curve 
defined by the \alert{homogenization} of $f$, that is, $f^*(X,Y,Z)=Z^df(\frac{X}{Z},\frac{Y}{Z})$.
Conversely, a projective curve defined by a homogeneous polynomial 
$F(X,Y,Z)$ defines three affine curves with 
\alert{dehomogenized}
polynomials $F(x,y,1)$, $F(1,u,v)$, $F(w,1,z)$.
The points $(a,b)\in \bar K^2$ of the affine curve defined by
$f(x,y)$ correspond to the 
points $(a:b:1)\in {\mathbb P}^2(\bar K)$ of ${\mathcal X}_{f^*}$.
Conversely, the points $(X:Y:Z)$ 
with $Z\neq 0$ (resp. $X\neq 0$, $Y\neq 0$) 
of a projective curve ${\mathcal X}_F$
correspond to the points of the 
affine curve defined by $F(x,y,1)$ 
(resp. $F(1,u,v)$, $F(w,1,z)$)
and so they are called affine points of $F(x,y,1)$.
The points with $Z=0$ are said to be \alert{at infinity}. 

In the case $K={\mathbb F}_q$, any point of ${\mathcal X}_F$ is in 
${\mathbb P}^2({\mathbb F}_{q^m})$
for some $m$.
If $L/K$ is a field extension we define the \alert{$L$-rational points} of ${\mathcal X}$ as the points in the set ${\mathcal X}_F(L)={\mathcal X}_F\cap L^2$.

We will assume that $F$ is irreducible in any field extension of $K$ (i.e. \alert{absolutely irreducible}). 
Otherwise the curve is a proper union of two curves.


If two polynomials in $K(X,Y,Z)$ differ by a multiple of $F$, when evaluating them at a point of ${\mathcal X}_F$ we obtain the same value.
Thus it makes sense to consider $K(X,Y,Z)/(F)$.
Since $F$ is irreducible, $K(X,Y,Z)/(F)$ 
is an integral domain and we can construct its field of fractions $Q_F$.
For evaluating one such fraction at a projective point we want the result not to depend on the representative of the projective point. Hence, we require the numerator and the denominator to have one representative each, which is a homogeneous polynomial and both having the same degree. 
The \alert{function field} of ${\mathcal X}_F$,
denoted \alert{$K(\mathcal X_F)$}, is the set of elements of $Q_F$ admitting one such representation.
Its elements are the
\alert{rational functions} of ${\mathcal X}_F$. 
We say that a rational function $f\in K({\mathcal X}_F)$
is \alert{regular in a point} $P$ if 
there exists a representation of it as a fraction 
$\frac{G(X,Y,Z)}{H(X,Y,Z)}$
with $H(P)\neq 0$. In this case we
define $f(P)=\frac{G(P)}{H(P)}$.
The ring of all rational functions regular in $P$ is denoted 
\alert{${\mathcal O}_P$}. Again it is an integral domain and this time its field of fractions is $K({\mathcal X_F})$.

Let $P\in {\mathcal X}_F$ be a point.
If all the partial derivatives $F_X, F_Y, F_Z$ vanish at $P$
then $P$ is said to be a \alert{singular point}.
Otherwise it is said to be a \alert{simple point}.
Curves without singular points are called \alert{non-singular}, \alert{regular} or \alert{smooth} curves.

From now on we will assume that $F$ is absolutely irreducible and that 
${\mathcal X}_F$ is smooth.

The \alert{genus} of a smooth plane 
curve ${\mathcal X}_F$  may be defined as
$$g=\frac{(\deg(F)-1)(\deg(F)-2)}{2}.$$ For general curves the genus 
is defined using differentials on a curve which is out of the purposes of this survey.

\subsubsection{Weierstrass semigroup}

\begin{theorem}
Consider a point $P$ in the projective curve ${\mathcal X}_F$.
There exists 
$t\in{\mathcal O}_P$
such that for any non-zero $f\in K({\mathcal X_F})$ there exists 
a unique integer $v_P(f)$ with $$f=t^{v_P(f)}u$$ for some $u\in{\mathcal O}_P$ with $u(P)\neq 0$.
The value $v_P(f)$ depends only on ${\mathcal X}_F$, $P$.

If $G(X,Y,Z)$ and $H(X,Y,Z)$ are two homogeneous polynomials 
of degree $1$ such that $G(P)=0$, $H(P)\neq 0$, 
and $G$ is not a constant multiple of $F_X(P)X+F_Y(P)Y+F_Z(P)Z$,
then we can take $t$ to be the class in ${\mathcal O}_P$ 
of $\frac{G(X,Y,Z)}{H(X,Y,Z)}$.
\end{theorem}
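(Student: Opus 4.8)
The plan is to recognize the asserted decomposition as the statement that $\mathcal{O}_P$ is a discrete valuation ring whose field of fractions is $K(\mathcal{X}_F)$, with $v_P$ its associated valuation. First I would pass to an affine chart containing $P$; say $Z\neq 0$, so that $\mathcal{O}_P$ is the localization of the coordinate ring $K[x,y]/(f)$, with $f=F(x,y,1)$, at the maximal ideal $\mathfrak{m}_P$ of classes of functions vanishing at $P$. Here the units of $\mathcal{O}_P$ are precisely the $u$ with $u(P)\neq 0$, and $\mathfrak{m}_P$ is the unique maximal ideal. The whole first assertion then reduces to showing that $\mathfrak{m}_P$ is principal, say generated by some $t$, together with $\bigcap_n \mathfrak{m}_P^n=0$.

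The heart of the argument, and the step I expect to be the main obstacle, is to prove that $\mathcal{O}_P$ is a discrete valuation ring; this is exactly where smoothness of $P$ enters. Since $\mathcal{X}_F$ is a curve, $\mathcal{O}_P$ is a one-dimensional Noetherian local domain. I would invoke the Jacobian criterion: because $P$ is a simple point, the partials $F_X(P),F_Y(P),F_Z(P)$ do not all vanish, and this forces the cotangent space $\mathfrak{m}_P/\mathfrak{m}_P^2$ to be one-dimensional over the residue field. Indeed, the cotangent space of the ambient plane at $P$ is spanned by the classes of $x-a$ and $y-b$, and passing to the curve kills exactly the one-dimensional relation given by the linear part of $f$. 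Hence $\mathcal{O}_P$ is a regular local ring of dimension one, and a one-dimensional regular Noetherian local domain is a DVR; by Nakayama's lemma any $t\in\mathfrak{m}_P\setminus\mathfrak{m}_P^2$ generates $\mathfrak{m}_P$.

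Granting that $\mathcal{O}_P$ is a DVR with uniformizer $t$, the decomposition follows formally. Every nonzero $a\in\mathcal{O}_P$ has a well-defined $n\geq 0$ with $a\in\mathfrak{m}_P^n\setminus\mathfrak{m}_P^{n+1}$ (finite by Krull's intersection theorem), and then $a=t^nu$ with $u$ a unit, since $\mathfrak{m}_P^n=(t^n)$. For a general nonzero $f\in K(\mathcal{X}_F)$, writing $f=a/b$ and setting $v_P(f)=v_P(a)-v_P(b)$ yields $f=t^{v_P(f)}u$ with $u$ a unit, and uniqueness of the exponent follows from $\bigcap_n\mathfrak{m}_P^n=0$. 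That $v_P$ depends only on $\mathcal{X}_F$ and $P$, not on $t$, is then immediate: any two uniformizers differ by a unit, so they assign the same exponent to each $f$.

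Finally, for the concrete choice $t=G/H$: since $G(P)=0$ and $H(P)\neq 0$, its class lies in $\mathfrak{m}_P$, so by the regularity argument it suffices to check that $t\notin\mathfrak{m}_P^2$. I would identify $\mathfrak{m}_P/\mathfrak{m}_P^2$ with the space of linear forms vanishing at $P$ modulo the tangent form $F_X(P)X+F_Y(P)Y+F_Z(P)Z$ (which itself lies in $\mathfrak{m}_P^2$, as one sees from Euler's relation and the fact that the tangent line meets the smooth curve with multiplicity at least two). The image of $G/H$ in the cotangent space is then nonzero precisely when $G$ is \emph{not} a constant multiple of this tangent form. The transversality hypothesis is exactly this condition, so $t$ generates $\mathfrak{m}_P$ and is a valid uniformizer, completing the proof.
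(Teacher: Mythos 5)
Your proposal is correct, but there is nothing in the paper to compare it against: the paper states this theorem without proof, treating it as classical background on smooth curves and deferring implicitly to its references (Fulton, Stichtenoth, Pretzel, H\o holdt--van Lint--Pellikaan). What you have written is essentially the standard proof found in those sources: identify $\mathcal{O}_P$ with the localization of the affine coordinate ring at $\mathfrak{m}_P$, use the Jacobian criterion to see that smoothness makes $\mathfrak{m}_P/\mathfrak{m}_P^2$ one-dimensional, conclude that $\mathcal{O}_P$ is a one-dimensional regular local ring, hence a discrete valuation ring, and then read off the factorization $f=t^{v_P(f)}u$ and its uniqueness from Nakayama and the Krull intersection theorem. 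This is the right argument, and it makes explicit exactly where the standing smoothness hypothesis of the paper is used, which the paper's bare statement does not. Two points you gloss over deserve a sentence each if this were to be written out in full: first, the paper defines $\mathcal{O}_P$ projectively (as fractions $G/H$ of homogeneous polynomials of equal degree), so the identification with the affine localization, and the equivalence of the projective simple-point condition with nonvanishing of the affine linear part of $f$, both need the Euler relation $\deg(F)\cdot F=XF_X+YF_Y+ZF_Z$ evaluated at $P$; second, your claim that the tangent form $F_X(P)X+F_Y(P)Y+F_Z(P)Z$ maps into $\mathfrak{m}_P^2$ is most cleanly seen by the same Euler computation, which shows its dehomogenization equals $f_x(P)(x-a)+f_y(P)(y-b)$, i.e.\ the linear part of $f$ at $P$, and this lies in $\mathfrak{m}_P^2$ because $f$ itself is zero in $\mathcal{O}_P$; the appeal to intersection multiplicity is a geometric paraphrase rather than a proof. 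With those details filled in, your argument is complete and characteristic-independent.
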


An element such as $t$ is called a \alert{local parameter}.
If there is no confusion we will write 
$\frac{G(X,Y,Z)}{H(X,Y,Z)}$ for its class in ${\mathcal O}_P$.
The value $v_P(f)$ is called the \alert{valuation} of $f$ at $P$.
The point $P$ is said to be a \alert{zero} of multiplicity $m$ if $v_P(f)=m>0$ 
and a \alert{pole} of multiplicity $-m$ if $v_P(f)=m<0$.
The valuation satisfies that 
$v_P(f)\geq 0$ if and only if $f\in{\mathcal O}_P$
and that in this case $v_P(f)> 0$ if and only if $f(P)=0$.
\begin{lemma}
\label{lemma:propsv}
\begin{enumerate}
\item $v_P(f)=\infty$ if and only if $f=0$
\item $v_P(\lambda f)=v_P(f)$ for all non-zero $\lambda\in K$
\item $v_P(f g)=v_P(f)+v_P(g)$
\item $v_P(f + g)\geq \min\{v_P(f),v_P(g)\}$ and equality holds if 
$v_P(f)\neq v_P(g)$
\item If $v_P(f)= v_P(g)\geq 0$ then there exists $\lambda\in K$ such that
$v_P(f-\lambda g)>v_P(f)$.
\end{enumerate}
\end{lemma}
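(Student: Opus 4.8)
The plan is to reduce everything to the canonical representation $f=t^{v_P(f)}u$ with $u\in\mathcal O_P$ and $u(P)\neq 0$ supplied by the Theorem, together with its uniqueness, the ring structure of $\mathcal O_P$, and the two stated equivalences ($v_P(f)\geq 0$ iff $f\in\mathcal O_P$; and, in that case, $v_P(f)>0$ iff $f(P)=0$). Item~(1) is merely the convention $v_P(0)=\infty$: every nonzero $f$ has a finite integer valuation by the Theorem, so the value $\infty$ is reserved for $f=0$. I would also record at the outset that $v_P(t)=1$, since $t=t^1\cdot 1$ with $1(P)=1\neq 0$; in particular $t(P)=0$ and $v_P(t^m)=m$, a fact I will use repeatedly.

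For items (2) and (3) I would multiply canonical forms and invoke uniqueness. Writing $f=t^{v_P(f)}u$, for a nonzero constant $\lambda\in K$ one gets $\lambda f=t^{v_P(f)}(\lambda u)$ with $\lambda u\in\mathcal O_P$ and $(\lambda u)(P)=\lambda u(P)\neq 0$, so uniqueness forces $v_P(\lambda f)=v_P(f)$. Similarly, with $g=t^{v_P(g)}w$ one has $fg=t^{v_P(f)+v_P(g)}(uw)$; since $\mathcal O_P$ is a ring, $uw\in\mathcal O_P$, and $(uw)(P)=u(P)w(P)\neq 0$, so again by uniqueness $v_P(fg)=v_P(f)+v_P(g)$.

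Item (4) follows by factoring out the smaller power of $t$. Assuming without loss of generality $m:=v_P(f)\leq n:=v_P(g)$, I would write $f+g=t^m\bigl(u+t^{n-m}w\bigr)$. Because $n-m\geq 0$ and $t\in\mathcal O_P$, the factor $h:=u+t^{n-m}w$ lies in $\mathcal O_P$, so either $f+g=0$ (and the inequality is trivial) or $v_P(f+g)=m+v_P(h)\geq m$ using (3) together with $v_P(h)\geq 0$. When $m<n$, evaluation at $P$ gives $h(P)=u(P)+0=u(P)\neq 0$ (here $t(P)=0$ is essential), whence $v_P(h)=0$ and the bound sharpens to the equality $v_P(f+g)=m=\min\{v_P(f),v_P(g)\}$.

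Finally, for item (5), with $m:=v_P(f)=v_P(g)\geq 0$ the quotient $f/g=u/w$ lies in $\mathcal O_P$ and does not vanish at $P$; the natural candidate is $\lambda:=(f/g)(P)=u(P)/w(P)$, well defined since $w(P)\neq 0$. Then $f-\lambda g=g\,(f/g-\lambda)=t^m w\,(f/g-\lambda)$, and $f/g-\lambda\in\mathcal O_P$ vanishes at $P$, so $v_P(f/g-\lambda)>0$ and hence, by (3) and $v_P(w)=0$, we get $v_P(f-\lambda g)=m+v_P(f/g-\lambda)>m$. The main obstacle is exactly the requirement $\lambda\in K$: the construction only guarantees $\lambda$ in the residue field of $\mathcal O_P$ (the quotient of $\mathcal O_P$ by the functions vanishing at $P$), which equals $K$ precisely when $P$ is a $K$-rational point. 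I would therefore either invoke rationality of $P$ (the case relevant to the coding applications) or read the ``$K$'' in the statement as this residue field; verifying that $u(P)/w(P)$ genuinely lands in the intended field is the one delicate point, everything else being bookkeeping with the normal form.
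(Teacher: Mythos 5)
You should first note that the paper provides no proof of this lemma at all: it is stated bare, as a list of standard properties of the valuation furnished by the preceding theorem, so there is nothing in the text to compare your argument against. Your proof is correct and is precisely the argument the paper implicitly takes for granted: items (2) and (3) follow from multiplying canonical forms $f=t^{v_P(f)}u$, $g=t^{v_P(g)}w$ and invoking uniqueness of the exponent; item (4) from factoring out the smaller power of $t$ and evaluating the regular factor at $P$, where $t(P)=0$ yields the equality case; and item (5) from the choice $\lambda=(f/g)(P)$. Your caveat on item (5) is a genuine catch rather than a defect of your argument: in the paper's setup rational functions have coefficients in $K$ while points live in ${\mathbb P}^2(\bar K)$, so $(f/g)(P)$ lies a priori only in the residue field at $P$, and the statement as printed can fail at a point that is not $K$-rational (already for $v_P(f)=v_P(g)=0$ one would need $\lambda=f(P)/g(P)\in K$, which need not hold). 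Since every use the paper makes of this material --- the Weierstrass semigroup at a rational point, one-point codes evaluated at rational points --- takes $P$ to be $K$-rational, your resolution (assume $P$ rational, or read $K$ as the residue field at $P$) is exactly the right repair of an hypothesis the paper leaves implicit. Two minor remarks: your argument for (5) never actually needs $v_P(f)=v_P(g)\geq 0$, since the factor $t^m$ cancels in $f/g$ in any case; and in (3)--(4) the degenerate cases $f=0$, $g=0$, $f+g=0$ are covered by the convention $v_P(0)=\infty$, as you note for (4).
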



Let \alert{$L(m P)$} be the set of rational functions having only poles at 
$P$ and with pole order at most $m$. It is a $K$-vector space and so we can define \alert{$l(mP)$}$=\dim_K(L(m P))$.
One can prove that $l(m P)$ is either $l((m-1) P)$ or $l((m-1) P)+1$.
There exists a rational function $f\in K({\mathcal X}_F)$
having only one pole at $P$
with $v_P(f)=-m$ if and only if $l(m P)=l((m-1) P)+1$.

Let $A=\bigcup_{m\geq 0}{L(m P)}$, 
that is, $A$ is 
the ring of rational functions having poles only at $P$. 
Define $\Lambda=\{-v_P(f): f\in A\setminus\{0\}\}.$
It is obvious that $\Lambda\subseteq{\mathbb N}_0$,
where ${\mathbb N}_0$ denotes the set of all non-negative integers.

\begin{lemma}
The set $\Lambda\subseteq{\mathbb N}_0$ satisfies
\begin{enumerate}
\item $0\in \Lambda$
\item $m+m'\in \Lambda$ whenever $m,m'\in \Lambda$
\item ${\mathbb N}_0\setminus\Lambda$ has a finite number of elements
\end{enumerate}
\end{lemma}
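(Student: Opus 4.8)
The plan is to verify the three semigroup properties for $\Lambda=\{-v_P(f): f\in A\setminus\{0\}\}$ directly from the valuation properties collected in Lemma~\ref{lemma:propsv} and from the dimension-counting fact relating $l(mP)$ to $l((m-1)P)$. The set-up to keep in mind is that $A$ is the ring of rational functions regular everywhere except possibly at $P$, so an element $f\in A$ has $v_Q(f)\geq 0$ for every point $Q\neq P$, and $\Lambda$ records the possible pole orders $-v_P(f)$ at $P$.

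First I would dispatch property (1): the constant function $1$ lies in $A$ and is nonzero, and since $v_P(1)=v_P(1\cdot 1)=2v_P(1)$ by property (3) of Lemma~\ref{lemma:propsv}, we get $v_P(1)=0$, hence $0\in\Lambda$. Next, for property (2), suppose $m,m'\in\Lambda$, so there exist nonzero $f,g\in A$ with $-v_P(f)=m$ and $-v_P(g)=m'$. The product $fg$ again has poles only at $P$ (a pole of $fg$ at some $Q$ would force a pole of $f$ or $g$ there), so $fg\in A$, and it is nonzero since $A$ is an integral domain. By property (3) of Lemma~\ref{lemma:propsv}, $-v_P(fg)=-v_P(f)-v_P(g)=m+m'$, so $m+m'\in\Lambda$.

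The real content is property (3), the finiteness of the gaps ${\mathbb N}_0\setminus\Lambda$, and this is where I expect the main obstacle to lie, since it cannot come from the formal valuation axioms alone. The hook is the statement quoted just before this lemma: $l(mP)$ equals either $l((m-1)P)$ or $l((m-1)P)+1$, and the increment of $1$ occurs exactly when there is a function in $A$ with pole order precisely $m$ at $P$, i.e.\ exactly when $m\in\Lambda$. Thus $|\Lambda\cap\{0,1,\dots,m\}| = l(mP)$, and each nongap between $0$ and $m$ contributes a jump of one to the dimension. To bound the number of gaps I would invoke the Riemann part of the Riemann--Roch theorem, namely that $l(mP)\geq m+1-g$ for all $m$, where $g$ is the genus; equivalently $l(mP)=m+1-g$ for all sufficiently large $m$. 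Since the number of gaps up to $m$ is exactly $m+1-l(mP)$, this quantity is bounded above by $g$ and stabilizes at $g$ once $m$ is large, so ${\mathbb N}_0\setminus\Lambda$ is finite (indeed of cardinality $g$).

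The delicate point is that the finiteness genuinely requires an input beyond the local algebra of the valuation: one needs to know that the pole order at the single point $P$ can be made arbitrarily large while the growth of $l(mP)$ is controlled linearly by $m-g+1$. I would therefore make explicit that I am assuming the Riemann inequality $l(mP)\geq m+1-g$ (a consequence of Riemann--Roch, which the survey situates in the general theory of differentials it declines to develop). If one prefers to avoid quoting Riemann--Roch, an alternative route is to exhibit a single nonconstant $f\in A$ with $-v_P(f)=n>0$; then all multiples $kn$ lie in $\Lambda$ by property (2), and a more careful combinatorial argument (a Chicken McNugget / Frobenius-style estimate combined with the bounded increments of $l(mP)$) yields cofiniteness. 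I expect the cleanest and most honest presentation for this survey is the Riemann--Roch route, flagging it as the one external ingredient.
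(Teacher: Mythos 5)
Your proof is correct and follows essentially the same route as the paper: constants give $0\in\Lambda$, multiplicativity of the valuation gives closure under addition, and the Riemann--Roch dimension count ($l(mP)=m+1-g$ for large $m$, with the jump $l(mP)=l((m-1)P)+1$ characterizing non-gaps) gives finiteness of the complement. Your use of the Riemann inequality $l(mP)\geq m+1-g$ for all $m$ is only a cosmetic variation on the paper's use of the equality for $m\geq 2g-1$; both yield that the number of gaps is exactly the genus.
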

\begin{proof}
\begin{enumerate}
\item Constant functions $f=a$
have no poles and satisfy $v_P(a)=0$ for all $P\in{\mathcal X}_F$. 
Hence, $0\in \Lambda$. 
\item 
If $m,m'\in \Lambda$ then there exist $f,g\in A$
with $v_P(f)=-m$, $v_P(g)=-m'$. Now,
by Lemma~\ref{lemma:propsv},
$v_P(fg)=-(m+m')$
and so $m+m'\in \Lambda$.
\item
The well-known Riemann-Roch theorem implies that $l(m P)=m+1-g$ if $m\geq 2g-1$.
On one hand this means that $m\in\Lambda$ for all $m\geq 2g$, 
and on the other hand, this means that $l(m P)=l((m-1) P)$ only for $g$ different values of $m$. So, the number of elements in ${\mathbb N}_0$
which are not in $\Lambda$ is equal to the genus.
\end{enumerate}
\end{proof}
The three properties of a subset of ${\mathbb N}_0$ in the previous
lemma will constitute the definition of a \alert{numerical semigroup}.
The particular numerical semigroup of the lemma is 
called the
\alert{Weierstrass semigroup} at $P$ and the elements in 
${\mathbb N}_0\setminus\Lambda$ are called the \alert{Weierstrass gaps}.

\subsubsection{Examples}

\begin{example}[Hermitian curve]
\label{example:hermite}
Let $q$ be a prime power.
The Hermitian curve ${\mathcal H}_q$ over ${\mathbb F}_{q^2}$
is defined by the affine equation
$x^{q+1}=y^{q}+y$ and homogeneous equation
$X^{q+1}-Y^qZ-YZ^q=0$.
It is easy to see that
its partial derivatives are $F_X=X^q$,
$F_Y=-Z^q$, $F_Z=-Y^q$ and so there is no projective point at which
${\mathcal H}_q$ 
is singular.
The point $P_\infty=(0:1:0)$
is the unique point of ${\mathcal H}_q$ at infinity.

We have $F_X(P_\infty)X+F_Y(P_\infty)Y+F_Z(P_\infty)Z=-Z$ and so $t=\frac{X}{Y}$ 
is a local parameter at $P_\infty$.
The rational functions $\frac{X}{Z}$ and $\frac{Y}{Z}$
are regular everywhere except at $P_\infty$. So, they belong to $\cup_{m\geq 0}L(m P_\infty)$.
One can derive from the homogeneous equation of the curve that 
$t^{q+1}=\left(\frac{Z}{Y}\right)^q+\frac{Z}{Y}$.
So, $v_{P_\infty}(\left(\frac{Z}{Y}\right)^q+\frac{Z}{Y})=q+1$. 
By Lemma~\ref{lemma:propsv} one can deduce that
$v_{P_\infty}(\frac{Z}{Y})=q+1$ 
and so $v_{P_\infty}(\frac{Y}{Z})=-(q+1)$. 
On the other hand, since 
$\left(\frac{X}{Z}\right)^{q+1}=\left(\frac{Y}{Z}\right)^q+\frac{Y}{Z}$,
we have $(q+1)v_{P_\infty}(\frac{X}{Z})=-q(q+1)$. So, 
$v_{P_\infty}(\frac{X}{Z})=-q$.

We have seen that $q,q+1\in\Lambda$.
In this case $\Lambda$ contains what we will call later the
semigroup generated by $q,q+1$ whose complement in ${\mathbb N}_0$ has $\frac{q(q-1)}{2}=g$ elements.
Since we know that the complement of $\Lambda$ in 
${\mathbb N}_0$ also has $g$ elements, this means that both semigroups are the same.

For further details on the Hermitian curve see \cite{Stichtenoth:hermite,HoLiPe:agc}.
\end{example}

\begin{example}[Klein quartic]
\label{example:klein}
The \alert{Klein quartic}
over ${\mathbb F}_q$
is defined by the affine equation
$x^3y+y^3+x=0.$
We shall see that if $\gcd(q,7)=1$
then ${\mathcal K}$ is smooth.
Its defining homogeneous polynomial is $F=X^3Y+Y^3Z+Z^3X$
and its partial derivatives are
$F_X=3X^2Y+Z^3$,
$F_Y=3Y^2Z+X^3$,
$F_Z=3Z^2X+Y^3$.
If the characteristic of ${\mathbb F}_{q^2}$ is $3$ then 
$F_X=F_Y=F_Z=0$ implies 
$X^3=Y^3=Z^3=0$ and so $X=Y=Z=0$.
Hence there is no projective point $P=(X:Y:Z)$ at which ${\mathcal K}$ 
is singular.
Otherwise, 
if the characteristic of ${\mathbb F}_{q^2}$ is different than $3$ then 
$F_X=F_Y=F_Z=0$ implies $X^3Y=-3Y^3Z$ and $Z^3X=-3X^3Y=9Y^3Z$.
Now the equation of the curve translates to $-3Y^3Z+Y^3Z+9Y^3Z=7Y^3Z=0$.
By hypothesis $\gcd(q,7)=1$ and so either $Y=0$ or $Z=0$.
In the first case, $F_Y=0$ implies $X=0$ and $F_X=0$ implies $Z=0$, a contradiction, and in the second case, $F_Y=0$ implies $X=0$ and $F_Z=0$ implies $Y=0$, 
another contradiction.

Let $P_0=(0:0:1)$. One can easily check that $P_0\in{\mathcal K}$.
We have $F_X(P_0)X+F_Y(P_0)Y+F_Z(P_0)Z=X$ and so $t=\frac{Y}{Z}$ 
is a local parameter at $P_0$.
From the equation of the curve we get 
$\left(\frac{X}{Y}\right)^3+\frac{Z}{Y}+\left(\frac{Z}{Y}\right)^3\frac{X}{Y}=0.$
So, at least one of the next equalities holds
\begin{itemize}
\item $3v_{P_0}(\frac{X}{Y})=v_{P_0}(\frac{Z}{Y})$
\item $3v_{P_0}(\frac{X}{Y})=3v_{P_0}(\frac{Z}{Y})+v_{P_0}(\frac{X}{Y})$
\item $v_{P_0}(\frac{Z}{Y})=3v_{P_0}(\frac{Z}{Y})+v_{P_0}(\frac{X}{Y})$
\end{itemize}
Since $t=\frac{Y}{Z}$ 
is a local parameter at $P_0$, $v_{P_0}(\frac{Z}{Y})=-1$.
Now, since $v_{P_0}(\frac{X}{Y})$ is an integer, only the third equality is possible, which leads to the conclusion that $v_{P_0}(\frac{X}{Y})=2$.
Similarly,
$\left(\frac{X}{Z}\right)^3\frac{Y}{Z}+\left(\frac{Y}{Z}\right)^3+\frac{X}{Z}=0$ gives that at least one of the next equalities holds
\begin{itemize}
\item $3v_{P_0}(\frac{X}{Z})+v_{P_0}(\frac{Y}{Z})=3v_{P_0}(\frac{Y}{Z})$
\item $3v_{P_0}(\frac{X}{Z})+v_{P_0}(\frac{Y}{Z})=v_{P_0}(\frac{X}{Z})$
\item $3v_{P_0}(\frac{Y}{Z})=v_{P_0}(\frac{X}{Z})$
\end{itemize}
Again only the 
third equality is possible and this leads to $v_{P_0}(\frac{X}{Z})=3$.

Now we consider the rational functions
$f_{ij}=\frac{Y^iZ^j}{X^{i+j}}$.
We have already seen that $v_{P_0}(f_{ij})=-2i-3j$ and we want to see under which conditions 
$f_{ij}\in\cup_{m\geq 0}L(m P_0)$.
This is equivalent to see when it has no poles rather than $P_0$.
The poles of $f_{ij}$ may only be at points with $X=0$ and so only at
$P_0$ and $P_1=(0:1:0)$.
Using the symmetries of the curve we get
$v_{P_1}(\frac{Y}{X})=-1$, 
$v_{P_1}(\frac{Z}{X})=2$.
So, $v_{P_1}(f_{ij})=-i+2j$.
Then 
$f_{ij}\in\cup_{m\geq 0}L(m P_0)$ if and only if 
$-i+2j\geq 0$. 
We get that $\Lambda$ contains 
$\{2i+3j:i,j\geq 0, 2j\geq i\}=\{0,3,5,6,7,8,\dots\}$. 
This has $3$ gaps which is exactly the genus of ${\mathcal K}$.
So, $$\Lambda=\{0,3,5,6,7,8,9,10,\dots\}.$$ 

It is left as an exercise to prove that 
all this can be generalized to the curve ${\mathcal K}_m$ with defining 
polynomial $F=X^mY+Y^mZ+Z^mX$, provided that $\gcd(1,m^2-m+1)=1$.
In this case
$v_{P_0}(f_{ij})=-(m-1)i-mj$ and
$f_{ij}\in\cup_{m\geq 0}L(m P_0)$ if and only if 
$-i+(m-1)j\geq 0$. 
Since $(m-1)i+mj=(m-1)i'+mj'$ for some $(i',j')\neq(i,j)$ if and only if
$i\geq m$ or $j\geq m-1$ we deduce that 
$$\{-v_{P_0}(f_{ij}):f_{ij}\in \cup_{m\geq 0}L(m P_0)\}=\{(m-1)i+mj:(i,j)\neq 
(1,0),(2,0),\dots,(m-1,0)\}.$$
This set has exactly $\frac{m(m-1)}{2}$ gaps which is the genus of 
${\mathcal K}_m$. So it is exactly the Weierstrass semigroup at $P_0$. 
For further details on the Klein quartic we refer the reader to
\cite{Pretzel,HoLiPe:agc}.
\end{example}

\subsection{Basic notions and problems}

A \alert{numerical semigroup} is a subset $\Lambda$
of ${\mathbb N}_0$
containing $0$, closed
under summation and with finite complement in ${\mathbb N}_0$.
A general reference on numerical semigroups is \cite{RoGa:llibre}.

\subsubsection{Genus, conductor, gaps, non-gaps, enumeration}
For a numerical semigroup $\Lambda$
define
the \alert{genus} of $\Lambda$ as the number
$g=\#({\mathbb N}_0\setminus\Lambda)$
and the \alert{conductor} of $\Lambda$ as
the unique integer
$c\in\Lambda$
such that $c-1\not\in\Lambda$ and $c+{\mathbb N}_0\subseteq\Lambda$.
The elements in $\Lambda$ are called the \alert{non-gaps} of $\Lambda$
while the elements in
${\mathbb N}_0\setminus\Lambda$ are called the \alert{gaps} of $\Lambda$.
The \alert{enumeration}
of $\Lambda$ is the unique increasing bijective map
$\lambda:{\mathbb N}_0\longrightarrow\Lambda$.
We will use $\lambda_i$ for $\lambda(i)$.

\begin{lemma}
Let $\Lambda$ be a numerical semigroup with conductor $c$, genus $g$, and 
enumeration $\lambda$. The following are equivalent.
\begin{description}
\item[(i)] $\lambda_i\geq c$
\item[(ii)] $i\geq c-g$
\item[(iii)] $\lambda_i=g+i$
\end{description}
\end{lemma}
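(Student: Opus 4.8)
The plan is to fix $\Lambda$ and reduce everything to a single structural fact: every gap lies below the conductor. Since $c+{\mathbb N}_0\subseteq\Lambda$, no integer $\geq c$ is a gap, while $c-1\notin\Lambda$ shows that $c-1$ itself is a gap; hence the $g$ gaps form a subset of $\{1,\dots,c-1\}$ whose largest element is exactly $c-1$. Counting the $c$ integers in $\{0,1,\dots,c-1\}$ and subtracting the $g$ gaps, I get that $\Lambda$ contains exactly $c-g$ non-gaps below $c$. This observation is what powers both equivalences, so I would record it first.

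First I would prove \textbf{(i)}$\Leftrightarrow$\textbf{(ii)}. Because $\lambda$ enumerates $\Lambda$ in strictly increasing order, the $c-g$ non-gaps smaller than $c$ are precisely $\lambda_0<\lambda_1<\dots<\lambda_{c-g-1}$, so $\lambda_{c-g}$ is the smallest non-gap that is at least $c$ (indeed it equals $c$). Thus $\lambda_i<c$ holds exactly when $i\leq c-g-1$; negating both sides yields $\lambda_i\geq c\iff i\geq c-g$.

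Next I would prove \textbf{(i)}$\Leftrightarrow$\textbf{(iii)} by a counting argument. Among the $\lambda_i+1$ integers $0,1,\dots,\lambda_i$, exactly $i+1$ are non-gaps, namely $\lambda_0,\dots,\lambda_i$ (since $\lambda$ is increasing, any non-gap $\leq\lambda_i$ occurs among these). Hence the number of gaps in $\{0,\dots,\lambda_i\}$ equals $(\lambda_i+1)-(i+1)=\lambda_i-i$. Therefore the condition $\lambda_i=g+i$ is equivalent to saying that all $g$ gaps lie in $\{0,\dots,\lambda_i\}$, i.e.\ (as $\lambda_i$ is itself a non-gap) that every gap is strictly less than $\lambda_i$. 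Since the largest gap is $c-1$, this occurs precisely when $c-1<\lambda_i$, that is, when $\lambda_i\geq c$, which is \textbf{(i)}. Combining the two equivalences gives the full cycle.

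The routine parts are just the monotonicity and bijectivity bookkeeping of the enumeration; the step carrying the real content is the identity ``the number of gaps $\leq\lambda_i$ equals $\lambda_i-i$,'' which converts the arithmetic statement $\lambda_i=g+i$ into the structural statement that no gaps survive above $\lambda_i$. The main point to be careful about is the boundary: I must use that $c-1$ is a gap while $\lambda_i$ is a non-gap, so that ``all gaps $\leq\lambda_i$'' sharpens to ``all gaps $<\lambda_i$'' and lines up with $\lambda_i\geq c$ rather than the strict $\lambda_i>c$.
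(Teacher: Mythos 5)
Your proof is correct and takes essentially the same approach as the paper's: both rest on the counting identity that the number of gaps below $\lambda_i$ equals $\lambda_i-i$ (the paper writes this as $\lambda_i=g(i)+i$), combined with the fact that the largest gap is $c-1$ and the monotonicity of $\lambda$. The only cosmetic difference is that you establish $\lambda_{c-g}=c$ by directly counting the non-gaps below $c$, whereas the paper deduces it from the already-proved equivalence of (i) and (iii).
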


\begin{proof}
First of all notice that if
$g(i)$ is the number of gaps smaller than $\lambda_i$, then 
$\lambda_i=g(i)+i$.
To see that (i) and (iii) are equivalent notice that
$\lambda_i\geq c\Longleftrightarrow g(i)=g \Longleftrightarrow g(i)+i=g+i \Longleftrightarrow\lambda_i=g+i$.
Now, from this equivalence we deduce that $c=\lambda_{c-g}$.
Since $\lambda$ is increasing we deduce that $\lambda_i\geq c=\lambda_{c-g}$
if and only if $i\geq c-g$.
\end{proof}


\subsubsection{Generators, Ap\'ery set}
The \alert{generators} of a numerical semigroup are those non-gaps which can not
be obtained as a sum of two smaller non-gaps.
If $a_1,\dots,a_l$ 
are the generators of a semigroup $\Lambda$ then 
$\Lambda=\{n_1a_1+\dots+n_la_l:n_1,\dots,n_l\in{\mathbb N}_0\}$ and so 
$a_1,\dots,a_l$ are necessarily coprime.
If $a_1,\dots,a_l$ are coprime, we call $\{n_1a_1+\dots+n_la_l:n_1,\dots,n_l\in{\mathbb N}_0\}$
the \alert{semigroup generated} by $a_1,\dots,a_l$ and
denote it by $\langle a_1,\dots,a_l\rangle$.

The non-gap $\lambda_1$ is always a generator.
If for each integer $i$ from $0$ to $\lambda_1-1$ we consider $w_i$ to be the
smallest non-gap in $\Lambda$ that is congruent to $i$ modulo $\lambda_1$,
then each non-gap of $\Lambda$ can be expressed as $w_i+k\lambda_1$ for some
$i\in\{0,\dots,\lambda_1-1\}$ and some $k\in{\mathbb N}_0$. So, the generators different from $\lambda_1$ must be in 
$\{w_1,\dots,w_{\lambda_1-1}\}$ and this implies that there is always a finite number of generators. The set $\{w_0,w_1,\dots,w_{\lambda_1-1}\}$ is called the 
\alert{Ap\'ery set} of $\Lambda$ and denoted \alert{$Ap(\Lambda)$}.
It is easy to check that it equals
$\{l\in\Lambda: l-\lambda_1\not\in\Lambda\}.$
References related to the Ap\'ery set are \cite{Apery,FrGoHa,RoGaGaBr2002,RoGaGaBr2005,MaHe}.

\subsubsection{Frobenius' coin exchange problem}

Frobenius suggested the problem 
to determine the largest monetary amount that can not be obtained 
using only coins of specified denominations. 
A lot of information on the Frobenius' problem can be found in Ram\'\i rez Alfons\'\i n's book \cite{RamirezAlfonsin}.

If the different denominations are coprime then the set of amounts that can be obtained form a numerical semigroup and the question is equivalent to 
determining the largest gap.
This is why the largest gap of a numerical semigroup is called the \alert{Frobenius number} of the numerical semigroup.

If the number of denominations is two and the values of the coins are $a,b$ with $a,b$ coprime, then Sylvester's formula \cite{Sylvester} gives that the Frobenius number is 
$$ab-a-b.$$ 

However, when the number of denominations is larger, there is no closed polynomial form as can be derived from the next result due to Curtis~\cite{Curtis}.
\begin{theorem}
There is no finite set of polynomials $\{f_1,\dots,f_n\}$ such that
for each choice of $a,b,c\in{\mathbb N}$, there is some $i$ such that the Frobenius number of $a,b,c$ is $f_i(a,b,c)$.
\end{theorem}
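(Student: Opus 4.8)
The plan is to argue by contradiction: suppose such polynomials $f_1,\dots,f_n$ exist, and extract from them so much rigidity that the known combinatorial structure of three-generated semigroups cannot fit inside a finite list. The organizing tool is the Ap\'ery set. Fix the smallest generator to be a prime $p$ and let the other two generators $b,c$ vary among integers with $\gcd(p,b,c)=1$, so that $\langle p,b,c\rangle$ is a numerical semigroup. As recalled above, its Frobenius number equals $\max Ap(\langle p,b,c\rangle)-p$, and every element of the Ap\'ery set is the \emph{least} element of the semigroup in its residue class modulo $p$, hence of the form $\beta b+\gamma c$ with $\beta,\gamma\in{\mathbb N}_0$. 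Viewing $(b,c)$ as real coordinates, the map $(b,c)\mapsto\max Ap$ is a maximum of finitely many linear forms $\beta b+\gamma c$; it is therefore piecewise linear, and on the interior of each maximal piece (a two-dimensional cone) it coincides with a single form. Thus on each such cone the true Frobenius number is the affine function $F(p,b,c)=\beta b+\gamma c-p$ for one fixed pair $(\beta,\gamma)$.

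First I would use this to squeeze the hypothetical formulas. Fix one such two-dimensional cone, on which $F(p,b,c)=\beta b+\gamma c-p$ for a fixed pair $(\beta,\gamma)$. By hypothesis each lattice point of the cone is assigned a formula $f_i$ with $f_i(p,b,c)=\beta b+\gamma c-p$ at that point. Since the lattice points of a two-dimensional cone cannot be covered by finitely many proper plane curves (a fixed curve meets each line $b=b_0$ in boundedly many points, whereas the cone contains unboundedly many $c$ for large $b$), at least one index $i$ must satisfy the polynomial identity $f_i(p,b,c)=\beta b+\gamma c-p$ in the variables $b,c$. A given polynomial satisfies such an identity for at most one pair $(\beta,\gamma)$, so distinct pairs are forced to be handled by distinct $f_i$. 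Consequently, for every prime $p$ the number of distinct pairs $(\beta,\gamma)$ that arise as the coefficients of the maximal Ap\'ery element of some $\langle p,b,c\rangle$ is at most $n$.

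The contradiction, and the hard part, is to show that this count is \emph{unbounded} in $p$. Concretely I would exhibit, for each $N$, a prime $p$ and a family of pairs $(b,c)$ for which the maximal Ap\'ery element is realized by more than $N$ genuinely different coefficient pairs $(\beta,\gamma)$. A natural candidate is to take $b\equiv 1\pmod p$ and $c\equiv g\pmod p$ for a primitive root $g$, and then tune the magnitudes of $b$ and $c$: the least representative of a residue class $r$ is governed by a comparison between the pure cost $rb$, the cost $(rg^{-1}\bmod p)\,c$, and the mixed costs $\beta b+\gamma c$, and as $r$ and the two magnitudes vary the winning pair $(\beta,\gamma)$ runs through many values, several with both entries strictly positive. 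Proving rigorously that the number of distinct winners grows with $p$ is exactly the arithmetic heart of the theorem; it is essentially a statement about the lattice of minimal representatives modulo $p$, closely tied to the Euclidean (continued-fraction) behaviour of $b^{-1}c\bmod p$, and it is where all the real work lies.

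Finally I would note why cruder arguments do not suffice, to keep the strategy honest. One cannot win by a degree count alone: when $a,b,c$ grow like $N$ the Frobenius number is only $O(N^2)$, so any representing polynomials would have bounded degree, and the obstruction is therefore not the \emph{size} of the Frobenius number but the \emph{multiplicity} of mutually incompatible linear regimes. The argument above turns precisely this multiplicity against the finiteness of the list $\{f_1,\dots,f_n\}$.
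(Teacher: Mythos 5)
Your proposal has a genuine gap, and it is exactly the one you flag yourself. First, for context: the paper does not prove this theorem at all — it states it as a result of Curtis and cites his article — so your argument must stand entirely on its own, and it does not. The whole contradiction rests on the claim that, as the prime $p$ grows, the number of distinct coefficient pairs $(\beta,\gamma)$ realized by the maximal Ap\'ery element of some $\langle p,b,c\rangle$ (on a suitable two-parameter family of $(b,c)$) is unbounded. You do not prove this; you name a candidate construction (residues $b\equiv 1$, $c\equiv g \pmod{p}$ for a primitive root $g$, then tuning magnitudes) and then state that establishing it ``is exactly the arithmetic heart of the theorem'' and ``where all the real work lies.'' But that claim essentially \emph{is} the theorem: everything before it (piecewise linearity, the covering argument, one linear form per polynomial per fixed $p$) is routine, and everything after it is immediate. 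Note also that your counting step only shows that at most $n$ pairs $(\beta,\gamma)$ can win on a set of $(b,c)$ that is Zariski dense in the plane; pairs realized only on lower-dimensional or sporadic sets give no contradiction. So the unproven statement must be proved in the stronger form: for a single large $p$, there exist more than $n$ pairs $(\beta,\gamma)$, \emph{each} winning on a full two-dimensional family. That is a nontrivial piece of arithmetic about the Ap\'ery staircase of three-generated semigroups, and without it the proof does not close.

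There is also a secondary imprecision worth repairing even if the main gap were filled. The assertion that, ``viewing $(b,c)$ as real coordinates, the map $(b,c)\mapsto\max Ap$ is a maximum of finitely many linear forms'' is not correct as stated: which sums $\beta b+\gamma c$ compete within a given residue class modulo $p$ depends on $b \bmod p$ and $c \bmod p$, so the function is not piecewise linear on the real plane; moreover it is a maximum of \emph{minima} of linear forms, not a maximum of linear forms. Both points are fixable by first fixing the residues of $b$ and $c$ modulo $p$ and carrying out the cone decomposition and the density argument on the corresponding translated sublattice (as you implicitly do later), but the argument needs to be set up that way from the start. In summary, what you have is a plausible proof strategy — reduce to counting mutually incompatible linear regimes and show their number exceeds any fixed $n$ — but as submitted it is a plan rather than a proof: its central arithmetic claim is asserted, not established.
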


\subsubsection{Hurwitz question}
It is usually attributed to Hurwitz the problem 
of determining whether there exist non-Weierstrass numerical semigroups,
to which Buchweitz gave a positive answer, and the problem of characterizing
Weierstrass semigroups. 
For these questions we refer the reader to \cite{Torres95,Kim96,Komeda98}
and all the citations therein.

A related problem 
is bounding the number of rational points of a curve 
using Weierstrass semigroups. 
One can find some bounds in \cite{StVo,Lewittes,GeilMatsumoto}

\subsubsection{Wilf conjecture}
The Wilf conjecture (\cite{Wilf,DoMa})
states that the number $e$ of generators of a numerical semigroup of
genus $g$ and conductor $c$ satisfies
$$e\geq\frac{c}{c-g}.$$
It is easy to check it when the numerical semigroup is symmetric, that
is, when $c=2g$.
In \cite{DoMa} the inequality is proved for many
other cases.
In \cite{Bras:Fibonacci} it was proved by brute approach
that any numerical semigroup of genus at most $50$
also satisfies the conjecture.

\subsection{Classification}

\subsubsection{Symmetric and pseudo-symmetric numerical semigroups}
\label{section: symmetric numerical semigroups}

\begin{definition}A numerical semigroup $\Lambda$ with genus $g$ and conductor $c$ is said to be
\alert{symmetric} if $c=2g$.
\end{definition}

Symmetric numerical semigroups have been widely studied. For instance in
\cite{KiPe,HoLiPe:agc,CaFa:semigroup_singular_plane_models,BAdM}.

\begin{example}
\label{exemple: semigroups gen by two int}
Semigroups
\alert{generated by two integers}
are the semigroups of the form
$$\Lambda=\{ma+nb: a,b\in{\mathbb N}_0\}$$
for some integers $a$ and $b$.
For $\Lambda$ having 
finite complement in ${\mathbb N}_0$
it is necessary that $a$ and $b$ are
coprime integers.
Semigroups
generated by two coprime integers are 
symmetric \cite{KiPe,HoLiPe:agc}.

Geil introduces in
\cite{Geil:Norm-trace-codes}
the \alert{norm-trace curve} over ${\mathbb F}_{q^r}$
defined by the affine equation
$$x^{(q^r-1)/(q-1)}
=y^{q^{r-1}}+y^{q^{r-2}}
+\dots+y$$
where $q$ is a prime power.
It has a single rational point at infinity
and the Weierstrass semigroup at
the rational point at infinity
is generated by the two coprime integers
$(q^r-1)/(q-1)$ and
$q^{r-1}$.
So, it is an example
of a symmetric numerical semigroup.

Properties on semigroups
generated by two coprime integers can
be found in \cite{KiPe}.
For instance, the
semigroup generated by $a$ and $b$,
has conductor equal to $(a-1)(b-1)$,
and any element $l\in\Lambda$
can be written uniquely as
$l=ma+nb$ with $m,n$
integers such that $0\leq m<b$.

From the results in \cite[Section 3.2]{HoLiPe:agc}
one can get, for any
numerical semigroup $\Lambda$ generated by two integers,
the equation of a curve having a point whose
Weierstrass semigroup is $\Lambda$.
\end{example}

Let us state now a lemma related
to symmetric numerical semigroups.

\begin{lemma}
\label{lemma:symmetric-caracteritzacio}
\label{lemma:symmetric-implicacio}
A numerical semigroup $\Lambda$
with conductor $c$
is symmetric if and only if
for any non-negative integer $i$, if $i$ is a gap,
then $c-1-i$ is a non-gap.
\end{lemma}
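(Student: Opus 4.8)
The plan is to exploit the order-reversing involution $\phi(i)=c-1-i$ on the block of integers $\{0,1,\dots,c-1\}$ and to count gaps against non-gaps inside this block. Since $c$ is the conductor, we have $c+{\mathbb N}_0\subseteq\Lambda$, so every integer $\geq c$ is a non-gap and consequently \emph{all} $g$ gaps of $\Lambda$ lie in $\{0,\dots,c-1\}$. Hence this block of $c$ integers contains exactly $g$ gaps and exactly $c-g$ non-gaps. Note also that $\phi$ is a bijection of the block onto itself with $\phi^2=\mathrm{id}$, so it is injective, and that $c-1-i$ lands back in $\{0,\dots,c-1\}$ precisely when $0\leq i\leq c-1$, which holds whenever $i$ is a gap.

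First I would record the one implication that holds for \emph{every} numerical semigroup, independently of symmetry: if $x\in\{0,\dots,c-1\}$ is a non-gap, then $\phi(x)=c-1-x$ must be a gap. Indeed, were both $x$ and $c-1-x$ non-gaps, closure under addition would force their sum $c-1$ to be a non-gap, contradicting the defining property of the conductor that $c-1\notin\Lambda$. Thus $\phi$ injects the $c-g$ non-gaps of the block into the set of $g$ gaps, which already yields $c-g\leq g$, i.e.\ $c\leq 2g$.

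For the forward direction I would assume the stated condition, namely that $\phi$ sends every gap to a non-gap. Combined with the previous paragraph, $\phi$ then carries non-gaps of the block to gaps and gaps back to non-gaps; being an involution it exchanges the two sets bijectively, so they have equal cardinality, $c-g=g$, whence $c=2g$ and $\Lambda$ is symmetric. Conversely, assuming $c=2g$ I would use the resulting equality $c-g=g$ to upgrade the always-valid injection of non-gaps into gaps to a bijection, since an injection between finite sets of equal size is onto. Surjectivity means each gap $i$ equals $\phi(x)=c-1-x$ for some non-gap $x$ of the block; then $c-1-i=x$ is a non-gap, which is exactly the asserted condition.

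The argument is essentially a cardinality count, so there are no analytic difficulties. The only point demanding genuine care is the bookkeeping that distinguishes the implication valid for all semigroups (non-gap $\mapsto$ gap) from the one equivalent to symmetry (gap $\mapsto$ non-gap): the former is free, and symmetry is precisely what forces the reverse inclusion and thereby closes the involution into a perfect pairing. The secondary subtlety is the standard pigeonhole step where equality of finite cardinalities promotes injectivity to surjectivity, together with the boundary check that $\phi$ is always applied within the intended block $\{0,\dots,c-1\}$.
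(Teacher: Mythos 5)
Your proof is correct and takes essentially the same route as the paper's (sketched) argument, which likewise combines the free implication that $i\in\Lambda$ forces $c-1-i\notin\Lambda$ (since otherwise $c-1\in\Lambda$, contradicting the definition of the conductor) with a count of gaps versus non-gaps below $c$. Your write-up simply makes that counting explicit via the involution $i\mapsto c-1-i$ on $\{0,\dots,c-1\}$ and the pigeonhole step.
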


The proof
can be found in
\cite[Remark~4.2]{KiPe}
and \cite[Proposition~5.7]{HoLiPe:agc}.
It follows by counting the number of gaps and non-gaps
smaller than the conductor and the fact that 
if $i$ is a non-gap then $c-1-i$ must be a 
gap because otherwise $c-1$ would also be a non-gap.

\begin{definition}
A numerical semigroup $\Lambda$ with genus $g$ and conductor $c$
is said to be \alert{pseudo-symmetric}
if $c=2g-1$.
\end{definition}

Notice that a symmetric numerical semigroup can not be pseudo-symmetric.
Next lemma as well as its proof is analogous to 
Lemma~\ref{lemma:symmetric-implicacio}. 

\begin{lemma}
\label{lemma:caracteritzaciopseudosimetrics}
A numerical semigroup $\Lambda$
with odd conductor $c$
is pseudo-symmetric if and only if
for any non-negative integer $i$ different from $(c-1)/2$, if $i$ is a gap,
then $c-1-i$ is a non-gap.
\end{lemma}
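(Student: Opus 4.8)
The plan is to mirror the involution-and-counting argument sketched for Lemma~\ref{lemma:symmetric-implicacio}, but to track carefully the unique fixed point of the reflection $i\mapsto c-1-i$, which exists precisely because $c$ is odd. First I would record the facts valid for any numerical semigroup $\Lambda$ with conductor $c$ and genus $g$. Since $c+\mathbb{N}_0\subseteq\Lambda$, all $g$ gaps lie in $\{0,1,\dots,c-1\}$, so exactly $c-g$ of these $c$ integers are non-gaps. The reflection $\phi(i)=c-1-i$ is an involution of $\{0,\dots,c-1\}$, and the key observation (already used in the symmetric case) is that $\phi$ sends non-gaps to gaps: if $i$ and $c-1-i$ were both non-gaps, their sum $c-1$ would lie in $\Lambda$, contradicting that $c$ is the conductor.

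Next I would isolate the fixed point $m=(c-1)/2$, an integer exactly because $c$ is odd, and the unique element of $\{0,\dots,c-1\}$ fixed by $\phi$. Since $2m=c-1\notin\Lambda$, the element $m$ is necessarily a gap in every such semigroup; this is exactly the element the statement sets aside. Moreover, no non-gap can map to $m$ under $\phi$, since $\phi(i)=m$ forces $i=\phi(m)=m$, which is a gap. Hence $\phi$ restricts to an injection from the $c-g$ non-gaps into the set of gaps different from $m$, a set of size $g-1$, giving the inequality $c-g\leq g-1$ for free.

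With these facts both implications reduce to a counting equality. For the forward direction, pseudo-symmetry ($c=2g-1$) gives $c-g=g-1$, so the injection above is a bijection between the non-gaps and the gaps different from $m$; applying the involution $\phi$ then shows that every gap $i\neq m$ is sent to a non-gap $c-1-i$, which is the desired condition. For the converse, the hypothesis says precisely that $\phi$ maps each gap $i\neq m$ to a non-gap, i.e.\ $\phi$ injects the $g-1$ gaps other than $m$ into the non-gaps, yielding $g-1\leq c-g$; combined with the reverse inequality already established, this forces $c-g=g-1$, that is $c=2g-1$.

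The only real subtlety, and the step I would be most careful with, is the bookkeeping around the fixed point $m$: one must check that $m$ is automatically a gap and that it is excluded from both injections, since this single element is the sole difference between the pseudo-symmetric count $c=2g-1$ and the symmetric count $c=2g$. Everything else is the same injective-map-plus-cardinality argument that proves the symmetric case.
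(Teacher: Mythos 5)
Your proof is correct and takes essentially the same route as the paper, which only sketches its argument as ``analogous'' to the symmetric case: counting the gaps and non-gaps smaller than the conductor together with the observation that the reflection $i\mapsto c-1-i$ sends non-gaps to gaps (else $c-1$ would be a non-gap). Your explicit bookkeeping around the fixed point $(c-1)/2$ --- that it is automatically a gap and must be excluded from the injection --- is exactly the adjustment the paper leaves implicit in passing from $c=2g$ to $c=2g-1$.
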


\begin{example}
The Weierstrass semigroup
at $P_0$
of the Klein quartic 
of Example~\ref{example:klein}
is
$\Lambda=\{0,3\}\cup\{i\in{\mathbb N}_0: i\geq 5\}.$
In this case $c=5$
and the only gaps different from $(c-1)/2$ are
$l=1$ and $l=4$. In both cases we have
$c-1-l\in\Lambda$. This proves that
$\Lambda$ is pseudo-symmetric. 
\end{example}

In \cite{RoBr:irreducible} the authors prove that
the set of irreducible semigroups,
that is, the semigroups that can not be expressed as a proper intersection
of two numerical semigroups,
is the union of the set of symmetric semigroups and 
the set of pseudo-symmetric semigroups.

\subsubsection{Arf numerical semigroups}
\label{section: Arf numerical semigroups}

\begin{definition}A numerical semigroup $\Lambda$
with enumeration $\lambda$
is called an \alert{Arf numerical semigroup}
if
$\lambda_i+\lambda_j-\lambda_k\in\Lambda$
for every $i,j,k\in {\mathbb N}_0$ with $i\geq j\geq k$
\cite{CaFaMu:arf}.
\end{definition}

For further work on Arf numerical semigroups and generalizations we refer the reader to
\cite{BaDoFo,RoGaGaBr,BAGS,MuToVi:sparse}.
For results on Arf semigroups related to
coding theory, see \cite{Bras:2003_AAECC,CaFaMu:arf}.

\begin{example}
It is easy to check that the Weierstrass semigroup
in Example~\ref{example:klein} is Arf.
\end{example}

Let us state now two results
on Arf numerical semigroups
that will be used later.

\begin{lemma}
\label{lemma: arf te els non-gaps separats}
Suppose $\Lambda$ is Arf. If $i,i+j\in\Lambda$
for some $i,j\in{\mathbb N}_0$,
then
$i+kj\in\Lambda$ for all $k\in{\mathbb N}_0$.
Consequently,
if $\Lambda$ is Arf and $i,i+1\in\Lambda$, then $i\geq c$.
\end{lemma}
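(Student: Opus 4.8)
The plan is to prove the first assertion by a two-step induction on $k$, and then to obtain the consequence by specializing to $j=1$ and invoking the definition of the conductor. First I would restate the Arf condition in terms of values rather than indices: since the enumeration $\lambda$ is strictly increasing, the inequality $i\geq j\geq k$ between indices is equivalent to $\lambda_i\geq\lambda_j\geq\lambda_k$ between the corresponding non-gaps. So the Arf property can be read as: whenever $a,b,c\in\Lambda$ with $a\geq b\geq c$, one has $a+b-c\in\Lambda$. This value form is the only shape of the hypothesis I will use.

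For the first statement, fix $i,j\in{\mathbb N}_0$ with $i,i+j\in\Lambda$ and prove $i+kj\in\Lambda$ for all $k$ by induction on $k$, the base cases $k=0$ and $k=1$ being exactly the two hypotheses. For the inductive step, assuming $i+(k-1)j\in\Lambda$ and $i+kj\in\Lambda$, the key idea is to apply the value form of the Arf property to the triple $a=b=i+kj$ and $c=i+(k-1)j$. These satisfy $a\geq b\geq c$ (because $j\geq 0$), they all lie in $\Lambda$ by the induction hypothesis, and a short computation gives $a+b-c=2(i+kj)-(i+(k-1)j)=i+(k+1)j$. Hence $i+(k+1)j\in\Lambda$, which closes the induction.

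The consequence then follows at once. Applying the first part with $j=1$ to a pair $i,i+1\in\Lambda$ yields $i+k\in\Lambda$ for every $k\in{\mathbb N}_0$, that is, every integer at least $i$ is a non-gap. Since the conductor $c$ is by definition the least integer with $c+{\mathbb N}_0\subseteq\Lambda$, the inclusion $i+{\mathbb N}_0\subseteq\Lambda$ forces $c\leq i$, i.e.\ $i\geq c$.

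The argument is elementary, and I do not expect a serious obstacle. The one step I would flag as the crux is the choice of the triple $(a,b,c)=(i+kj,\,i+kj,\,i+(k-1)j)$ in the inductive step, together with the observation that the induction must carry two consecutive terms at a time (which is precisely why both $k=0$ and $k=1$ are needed as base cases). Everything else is routine bookkeeping.
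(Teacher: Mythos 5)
Your proof is correct and takes essentially the same approach as the paper's: induction on $k$ with the base cases $k=0,1$ and a single application of the Arf property per step. The only difference is the triple chosen in the inductive step --- the paper applies Arf to $(i+kj,\,i+j,\,i)$, computing $(i+kj)+(i+j)-i=i+(k+1)j$ so that only the latest term must be carried along, whereas you apply it to $(i+kj,\,i+kj,\,i+(k-1)j)$ and carry two consecutive terms; this is immaterial, though your variant incidentally uses only the $2\lambda_i-\lambda_j$ special case of the Arf condition (cf.\ Lemma~\ref{lemma: charactarf 2j-i}).
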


\begin{proof}
Let us prove this by induction on $k$.
It is obvious for $k=0$ and $k=1$.
If $k>0$ and $i,i+j,i+kj\in\Lambda$ then $(i+j)+(i+kj)-i=i+(k+1)j\in\Lambda$.
\end{proof}

Consequently, Arf semigroups are \alert{sparse semigroups} 
\cite{MuToVi:sparse}, that is, 
there are no two non-gaps in a row smaller than the conductor.

Let us give the definition
of inductive numerical semigroups.
They are an example of Arf numerical semigroups.

\begin{definition}A sequence $(H_n)$ of
numerical semigroups is called \alert{inductive}
if there exist sequences $(a_n)$ and $(b_n)$
of positive integers such that $H_1={\mathbb N}_0$ and for
$n>1$, $H_n=a_nH_{n-1}\cup\{m\in{\mathbb N}_0: m\geq a_nb_{n-1}\}$.
A numerical semigroup is called \alert{inductive} if it is a member of an
inductive sequence
\cite[Definition 2.13]{PeTo:redundancy_improved_codes}.
\end{definition}

One can see that inductive numerical semigroups are Arf
\cite{CaFaMu:arf}.

\begin{example}
\label{example:2nd GS es Arf}
Pellikaan, Stichtenoth and Torres proved in \cite{PeStTo}
that the
numerical
semigroups
for the
codes
over ${\mathbb F}_{q^2}$
associated to the second tower of Garcia-Stichtenoth
attaining the Drinfeld-Vl\u adu\c t
bound
\cite{GaSt:tff}
are given
recursively by $\Lambda_1={\mathbb N}_0$
and, for $m>0$,
$$\Lambda_{m}=q\cdot\Lambda_{m-1}\cup\{i\in{\mathbb N}_0: i\geq
q^m-q^{\lfloor(m+1)/2\rfloor}\}.$$
They
are examples of inductive numerical semigroups
and hence, examples of Arf numerical semigroups.
\end{example}

\begin{example}
\alert{Hyperelliptic numerical semigroups}.
These are the
numerical semigroups
generated by $2$ and an odd integer.
They are of the form $$\Lambda=\{0,2,4,\dots,2k-2,2k,2k+1,2k+2,2k+3,\dots\}$$
for some positive integer $k$.
\end{example}

The next lemma is proved in \cite{CaFaMu:arf}.
\begin{lemma}
The only Arf symmetric semigroups are 
hyperelliptic semigroups.
\end{lemma}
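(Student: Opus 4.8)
The plan is to prove the two implications separately, reducing the whole statement to the single invariant $\lambda_1$ (the multiplicity). For the direction ``hyperelliptic $\Rightarrow$ Arf and symmetric'', note first that a hyperelliptic semigroup is generated by $2$ and an odd integer, which are coprime, so it is symmetric by the fact recalled in Example~\ref{exemple: semigroups gen by two int}. To check that it is Arf I would use the equivalent formulation $a+b-d\in\Lambda$ for all non-gaps $a\geq b\geq d$ (equivalent to the indexed definition since $\lambda$ is increasing). Writing $c$ for the conductor, I split into two cases: if $a\geq c$ then $a+b-d\geq a\geq c$, so it lies in $\Lambda$; and if $a<c$ then $a,b,d$ are all non-gaps below $c$, hence all even, so $a+b-d$ is a non-negative even integer and is therefore again a non-gap. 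This settles the easy direction.

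For the converse, let $\Lambda$ be Arf and symmetric with genus $g$ and conductor $c=2g$. The goal is to prove $\lambda_1=2$, because any numerical semigroup of multiplicity $2$ is hyperelliptic: it contains every even number, and if $2k+1$ is its least odd non-gap one recovers exactly the stated form. Two ingredients combine here. First, symmetry gives $c=2g$, and since all $g$ gaps lie below $c$, the number of non-gaps strictly below $c$ equals $c-g=g$. Second, since $\Lambda$ is Arf it is sparse (Lemma~\ref{lemma: arf te els non-gaps separats}), so no two of these non-gaps are consecutive.

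The punchline is then a rigidity count. List the non-gaps below $c$ as $0=p_0<p_1<\cdots<p_{g-1}$. Sparsity gives $p_i-p_{i-1}\geq 2$, while the definition of the conductor gives $c-1=2g-1\notin\Lambda$, whence $p_{g-1}\leq 2g-2$. Telescoping, $p_{g-1}=\sum_{i=1}^{g-1}(p_i-p_{i-1})\geq 2(g-1)=2g-2\geq p_{g-1}$, forcing equality throughout: every difference equals $2$ and $p_{g-1}=2g-2$. Hence the non-gaps below $c$ are exactly $0,2,4,\dots,2g-2$, so $\lambda_1=2$ and $\Lambda$ is hyperelliptic.

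The main obstacle is really the bookkeeping rather than any single hard estimate: one must recognize that the symmetry hypothesis should be used only to fix the number of non-gaps below the conductor at exactly $c/2$, while the Arf hypothesis enters solely through sparsity, after which the rigidity of ``$g$ pairwise non-adjacent integers inside $\{0,\dots,2g-1\}$ starting at $0$'' does all the work. The small cases $g=0,1$ are degenerate and handled directly. Everything else is routine.
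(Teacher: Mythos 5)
Your proof is correct, and it is genuinely different from what the paper does: the paper offers no argument at all for this lemma, deferring entirely to the citation \cite{CaFaMu:arf}, whereas you give a complete, self-contained proof. Your forward direction is routine but sound (coprimality of $2$ and $2k+1$ gives symmetry via Example~\ref{exemple: semigroups gen by two int}, and the parity case split works because every non-gap below the conductor is even while every non-negative even integer is a non-gap). The converse is where your argument has real content: you use the Arf hypothesis only through sparseness (Lemma~\ref{lemma: arf te els non-gaps separats}), then symmetry pins the number of non-gaps strictly below $c=2g$ at exactly $g$, and the telescoping bound $2g-2\geq p_{g-1}=\sum_{i=1}^{g-1}(p_i-p_{i-1})\geq 2(g-1)$ forces every gap between consecutive non-gaps to equal $2$, i.e.\ the non-gaps below $c$ are exactly $0,2,\dots,2g-2$. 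This in fact proves the stronger statement that every \emph{sparse} symmetric semigroup is hyperelliptic (the setting of \cite{MuToVi:sparse}), of which the lemma is the special case for Arf semigroups; that extra generality, plus independence from outside references, is what your route buys at the cost of a page of bookkeeping. One pedantic boundary point: for $g=0$ the semigroup ${\mathbb N}_0$ is Arf and symmetric, and it is hyperelliptic only under the reading ${\mathbb N}_0=\langle 2,1\rangle$, since the paper's displayed form of a hyperelliptic semigroup assumes a positive $k$; you rightly flag $g=0,1$ as degenerate cases to be dispatched directly, which suffices.
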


In order to show which are the only Arf pseudo-symmetric semigroups
we need the Ap\'ery set that was previously defined.

\begin{lemma}
\label{lemma:pseudo}
Let $\Lambda$ be a pseudo-symmetric numerical semigroup. For any
$l\in Ap(\Lambda)$ 
different from $\lambda_1+(c-1)/2$,
$\lambda_1+c-1-l\in Ap(\Lambda)$.
\end{lemma}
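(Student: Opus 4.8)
The plan is to verify directly the two defining conditions of the Apéry set, using the description $Ap(\Lambda)=\{l\in\Lambda: l-\lambda_1\notin\Lambda\}$ recorded above. Writing $l'=\lambda_1+c-1-l$, I must establish both that $l'\in\Lambda$ (a non-gap) and that $l'-\lambda_1\notin\Lambda$ (a gap or negative).

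First I would dispose of the boundary case $l=0$, which lies in $Ap(\Lambda)$ for every numerical semigroup and is not the excluded element. Here $l'=\lambda_1+c-1\geq c$ is a non-gap, while $l'-\lambda_1=c-1$ is the Frobenius number and hence a gap; so $l'\in Ap(\Lambda)$ immediately. For the remaining case $l\neq 0$, the smallest positive non-gap is $\lambda_1$, so $l\geq\lambda_1$ and therefore $i:=l-\lambda_1$ is a non-negative integer. Since $l\in Ap(\Lambda)$ we have $i\notin\Lambda$, that is, $i$ is a gap, and the hypothesis $l\neq\lambda_1+(c-1)/2$ says precisely that $i\neq (c-1)/2$.

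The condition $l'\in\Lambda$ is where pseudo-symmetry enters, and I expect it to be the crux of the argument. Observe that $l'=\lambda_1+c-1-l=c-1-i$. Applying Lemma~\ref{lemma:caracteritzaciopseudosimetrics} to the gap $i$, which satisfies $i\neq(c-1)/2$, yields that $c-1-i$ is a non-gap, i.e.\ $l'\in\Lambda$. It is exactly here that the excluded value $\lambda_1+(c-1)/2$ (the unique $l$ giving $i=(c-1)/2$) must be forbidden, since the characterization makes no claim about the self-paired gap $(c-1)/2$.

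The condition $l'-\lambda_1\notin\Lambda$ I would obtain from closure under addition rather than from the symmetry. We have $l'-\lambda_1=c-1-l$. If $c-1-l<0$ there is nothing to prove, as negative integers are not in $\Lambda$. If $c-1-l\geq 0$, suppose for contradiction that $c-1-l\in\Lambda$; then, since $l\in\Lambda$ and $\Lambda$ is closed under summation, the sum $l+(c-1-l)=c-1$ would belong to $\Lambda$, contradicting that $c-1$ is a gap. Hence $c-1-l\notin\Lambda$, completing the verification that $l'\in Ap(\Lambda)$. The only thing requiring care is the bookkeeping of the degenerate cases $l=0$ and $c-1-l<0$; the mathematical content is concentrated in the single application of Lemma~\ref{lemma:caracteritzaciopseudosimetrics}.
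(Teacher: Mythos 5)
Your proof is correct and follows essentially the same route as the paper's: membership of $\lambda_1+c-1-l$ in $\Lambda$ comes from applying the pseudo-symmetry characterization (Lemma~\ref{lemma:caracteritzaciopseudosimetrics}) to the gap $l-\lambda_1$, and the failure of $c-1-l$ to lie in $\Lambda$ comes from closure under addition, since otherwise $c-1\in\Lambda$. The only difference is that you explicitly dispose of the boundary cases $l=0$ and $c-1-l<0$, which the paper leaves implicit; this is a minor (and welcome) extra precaution, not a different argument.
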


\begin{proof}
Let us prove first that 
$\lambda_1+c-1-l\in \Lambda$.
Since 
$l\in Ap(\Lambda)$,
$l-\lambda_1\not\in\Lambda$ and it is different from 
$(c-1)/2$ by hypothesis. Thus
$\lambda_1+c-1-l=c-1-(l-\lambda_1)\in \Lambda$ 
because $\Lambda$ is pseudo-symmetric. 

Now, $\lambda_1+c-1-l-\lambda_1=c-1-l\not\in\Lambda$ 
because otherwise $c-1\in\Lambda$. So
$\lambda_1+c-1-l$ must belong to $Ap(\Lambda)$.
\end{proof}

\begin{lemma}
The only Arf pseudo-symmetric semigroups are
$\{0,3,4,5,6,\dots\}$ and
$\{0,3,5,6,7,\dots\}$ (corresponding to the Klein quartic).
\end{lemma}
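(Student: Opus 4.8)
The plan is to force the conductor $c$ into exactly two values and then read off $\Lambda$ in each case; the engine is the interplay between the pseudo-symmetric gap/non-gap characterization (Lemma~\ref{lemma:caracteritzaciopseudosimetrics}) and the Arf sparseness property (Lemma~\ref{lemma: arf te els non-gaps separats}). Throughout, let $m=\lambda_1$ be the multiplicity. Since $\Lambda$ is pseudo-symmetric we have $c=2g-1$ with $g\geq 2$, so $c\geq 3$ is odd and $1$ is a gap. First I would rule out $m=2$: if $2\in\Lambda$ then $\Lambda$ consists of all even integers together with all odd integers above its least odd non-gap $b$, i.e. $\Lambda=\langle 2,b\rangle$ with $\gcd(2,b)=1$; such a semigroup generated by two coprime integers is symmetric, and a symmetric semigroup cannot be pseudo-symmetric. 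Hence $2\notin\Lambda$, that is, $2$ is a gap.

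The crux is a short chain of implications that I would run assuming $c\geq 7$ in order to reach a contradiction. Since $c\neq 3$ we have $1\neq(c-1)/2$, so Lemma~\ref{lemma:caracteritzaciopseudosimetrics} applied to the gap $1$ gives $c-2=(c-1)-1\in\Lambda$. Because $c-2<c$ and $\Lambda$ is Arf, sparseness forbids two consecutive non-gaps below the conductor; as $c-1\notin\Lambda$ is already known, this forces $c-3\notin\Lambda$, so $c-3$ is a gap. Now for $c\geq 7$ one has $c-3>(c-1)/2$, hence $c-3\neq(c-1)/2$, and Lemma~\ref{lemma:caracteritzaciopseudosimetrics} applied to the gap $c-3$ yields $(c-1)-(c-3)=2\in\Lambda$, contradicting that $2$ is a gap. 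Therefore $c\in\{3,5\}$. The same computation explains why $c=5$ is not excluded: there $c-3=2=(c-1)/2$ is precisely the exceptional element, so no constraint on $2$ is produced.

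Finally I would read off the two semigroups. If $c=3$ then $g=2$, the gaps are $1,2$, and $\Lambda=\{0,3,4,5,\dots\}$. If $c=5$ then $g=3$; the three gaps below $c$ must be $1$ (a gap), $2$ (a gap by the step above, and also the exceptional $(c-1)/2$), and $4=c-1$, so the non-gaps below $c$ are $0$ and $3$, giving $\Lambda=\{0,3,5,6,7,\dots\}$, the Klein quartic semigroup. To complete the statement I would check the converse, that both are genuinely Arf and pseudo-symmetric: for $\{0,3,4,5,\dots\}$ one verifies directly that $\lambda_i+\lambda_j-\lambda_k$ with $i\geq j\geq k$ is either $0$ or at least $3$, hence a non-gap, and $c=3=2g-1$; the semigroup $\{0,3,5,6,7,\dots\}$ was already shown to be pseudo-symmetric and Arf in Example~\ref{example:klein}.

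I expect the main obstacle to be spotting this particular chain rather than performing any hard computation: the key move is to feed the forced non-gap $c-2$ into Arf sparseness to manufacture the gap $c-3$, and then to apply pseudo-symmetry a second time to relate $c-3$ back to $2$, whose status has already been fixed. An alternative would be to exploit the Apéry-set symmetry of Lemma~\ref{lemma:pseudo}, pairing $l\mapsto\lambda_1+c-1-l$ on $Ap(\Lambda)$ and combining it with the Arf constraint on the Apéry set, in parallel with the treatment of the symmetric case; but the direct argument above appears cleaner and relies only on the gap characterization together with sparseness.
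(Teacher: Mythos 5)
Your proof is correct, but it takes a genuinely different route from the paper's. The paper's proof runs through the Ap\'ery set: using Lemma~\ref{lemma:pseudo} together with the Arf condition it first shows that $Ap(\Lambda)=\{0,\lambda_1+(c-1)/2,\lambda_1+c-1\}$, hence $\lambda_1=\#Ap(\Lambda)=3$ (the cases $\#Ap(\Lambda)=1,2$ being excluded because ${\mathbb N}_0$ and the hyperelliptic semigroups are not pseudo-symmetric), so that $1$ and $2$ are gaps; it then splits into the cases $1=(c-1)/2$ (giving $c=3$), $2=(c-1)/2$ (giving $c=5$), and the remaining case, where pseudo-symmetry puts both $c-2$ and $c-3$ in $\Lambda$, contradicting the sparseness of Lemma~\ref{lemma: arf te els non-gaps separats}. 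You bypass the Ap\'ery set and Lemma~\ref{lemma:pseudo} entirely: you obtain that $2$ is a gap from the observation that $\lambda_1=2$ would force $\Lambda$ to be hyperelliptic, hence symmetric, hence not pseudo-symmetric; you then bound the conductor by the chain ``pseudo-symmetry on the gap $1$ gives $c-2\in\Lambda$, sparseness gives $c-3\not\in\Lambda$, pseudo-symmetry on the gap $c-3$ gives $2\in\Lambda$,'' a contradiction for every odd $c\geq 7$, and you correctly explain why $c=5$ escapes ($c-3$ is then the exceptional element $(c-1)/2$). The two contradictions are mirror images of each other --- both combine the gaps $1,2$ with Lemma~\ref{lemma:caracteritzaciopseudosimetrics} and Lemma~\ref{lemma: arf te els non-gaps separats} --- but your decomposition is more elementary, since it avoids the Ap\'ery machinery that the paper introduces expressly for this lemma, at essentially no extra cost. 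You also verify the converse inclusion, namely that the two listed semigroups really are Arf and pseudo-symmetric, which the paper leaves implicit in its earlier examples; that makes your write-up slightly more complete as a proof of the stated equality of sets.
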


\begin{proof}
Let $\Lambda$ be an Arf pseudo-symmetric
numerical semigroup.
Let us show first that 
$Ap(\Lambda)=\{0,\lambda_1+(c-1)/2,\lambda_1+c-1\}$.
The inclusion $\supseteq$ is obvious. In order to prove the opposite inclusion 
suppose $l\in Ap(\Lambda)$, $l\not\in\{0,\lambda_1+(c-1)/2,\lambda_1+c-1\}$.
By Lemma~\ref{lemma:pseudo}, $\lambda_1+c-1-l\in \Lambda$
and since $l\neq\lambda_1+c-1$, 
$\lambda_1+c-1-l\geq\lambda_1$.
On the other hand,
if $l\neq 0$ then $l\geq \lambda_1$.
Now, by the Arf condition, 
$\lambda_1+c-1-l+l-\lambda_1=c-1\in\Lambda$, which is a contradiction.

Now, if $\# Ap(\Lambda)=1$ then $\lambda_1=1$
and $\Lambda={\mathbb N}_0$. But ${\mathbb N}_0$ is not pseudo-symmetric.

If $\# Ap(\Lambda)=2$ then $\lambda_1=2$.
But then $\Lambda$ must be hyperelliptic and so $\Lambda$ is not 
pseudo-symmetric.

So $\# Ap(\Lambda)$ must be $3$.
This implies that $\lambda_1=3$ and that 
$1$ and $2$ are gaps.
If $1=(c-1)/2$ then $c=3$ and this gives $\Lambda=\{0,3,4,5,6,\dots\}$.
Else if $2=(c-1)/2$ then $c=5$ and this 
gives $\Lambda=\{0,3,5,6,7,\dots\}$.
Finally, if 
$1\neq(c-1)/2$ 
and 
$2\neq(c-1)/2$,
since $\Lambda$ is pseudo-symmetric,
$c-2,c-3\in\Lambda$. 
But this contradicts 
Lemma~\ref{lemma: arf te els non-gaps separats}.
\end{proof}

The next two lemmas are two characterizations of
Arf numerical semigroups. The first one is proved in
\cite[Proposition 1]{CaFaMu:arf}.

\begin{lemma}
\label{lemma: charactarf 2j-i}
The numerical semigroup $\Lambda$ with
enumeration $\lambda$ is Arf if and only if for every two positive integers $i,j$
with $i\geq j$,
$2\lambda_i-\lambda_j\in \Lambda$.
\end{lemma}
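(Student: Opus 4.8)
The forward implication is immediate: if $\Lambda$ is Arf, then applying the defining condition $\lambda_i+\lambda_j-\lambda_k\in\Lambda$ with $(i,i,j)$ in place of $(i,j,k)$ (legal since $i\geq i\geq j$) yields $\lambda_i+\lambda_i-\lambda_j=2\lambda_i-\lambda_j\in\Lambda$ for all $i\geq j$. So the entire content lies in the converse, which I would organize in two stages.

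First I would recover, using only the two-term hypothesis, the arithmetic-progression property: if $u,v\in\Lambda$ with $u\leq v$, then $u+k(v-u)\in\Lambda$ for every $k\geq 0$. This is the statement of Lemma~\ref{lemma: arf te els non-gaps separats}, but since we are trying to \emph{prove} that $\Lambda$ is Arf we cannot invoke the Arf condition; instead I reprove it from the hypothesis by induction on $k$. The cases $k=0,1$ give $u$ and $v$. For the step, assuming $u+(k-1)(v-u)$ and $u+k(v-u)$ both lie in $\Lambda$, I apply the hypothesis to the pair formed by the larger non-gap $u+k(v-u)$ and the smaller one $u+(k-1)(v-u)$; a direct computation gives $2\big(u+k(v-u)\big)-\big(u+(k-1)(v-u)\big)=u+(k+1)(v-u)$, closing the induction. (The degenerate case $u=0$ is handled by closure under addition.)

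Second, writing the Arf condition in value form --- for non-gaps $x\geq y\geq z$ one must show $x+y-z\in\Lambda$ --- I would argue by strong induction on the spread $N=x-z\in{\mathbb N}_0$. If $N=0$ the three values coincide and the claim is trivial; if $x=y$ the claim is $2x-z\in\Lambda$, which is exactly the hypothesis (or closure, when $z=0$); and if $y=z$ it reduces to $x\in\Lambda$. In the remaining case $x>y>z$, set $\delta=y-z\geq 1$; the progression property gives $z+k\delta\in\Lambda$ for all $k$, so choosing $k$ maximal with $m:=z+k\delta\leq x$ and putting $r=x-m$ one gets $0\leq r<\delta$ and both $m,\,m+\delta\in\Lambda$. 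The three non-gaps $m+\delta\geq x\geq m$ then form a triple of strictly smaller spread $(m+\delta)-m=\delta<N$, so the induction hypothesis yields $(m+\delta)+x-m\in\Lambda$; since $(m+\delta)+x-m=x+\delta=x+y-z$, this is precisely the desired conclusion.

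The main obstacle is locating this reduction: the hypothesis only lets one ``double'' a non-gap over a smaller one, so to reach the genuinely three-term expression $x+y-z$ I must first manufacture a non-gap $m$ sitting just below $x$ at a multiple-of-$\delta$ distance, which is exactly what the progression property supplies, and then recognise $x+y-z$ as an Arf-type combination of the smaller-spread triple $(m+\delta,x,m)$. Once the spread is seen to drop strictly from $N$ to $\delta$ at each genuine step, termination of the induction on the non-negative integer $N$ is automatic.
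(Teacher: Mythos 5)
Your proof is correct, and it is worth noting that the paper does not actually prove this lemma at all: it quotes it from \cite[Proposition~1]{CaFaMu:arf}. So your argument is not a variant of an in-paper proof but a self-contained substitute for a missing one, and it holds up under scrutiny. The forward direction is the right specialization $(i,i,j)$ of the Arf condition. For the converse, your Step 1 is exactly Lemma~\ref{lemma: arf te els non-gaps separats} re-derived from the weaker two-term hypothesis: the paper proves that lemma using the full Arf condition via $(i+j)+(i+kj)-i=i+(k+1)j$, and you correctly replace this with the doubling identity $2\bigl(u+k(v-u)\bigr)-\bigl(u+(k-1)(v-u)\bigr)=u+(k+1)(v-u)$, which invokes only the hypothesis (with the case $u=0$ properly delegated to closure, a necessary care since the hypothesis is restricted to positive indices). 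Step 2 is also sound: in the main case $x>y>z$, the element $m=z+k\delta$ is a non-gap by Step 1, maximality of $k$ gives $m\leq x<m+\delta$, so $(m+\delta,x,m)$ is an ordered triple of non-gaps of spread $\delta=y-z<x-z=N$, and the inductive hypothesis yields $(m+\delta)+x-m=x+y-z\in\Lambda$; the degenerate cases $x=y$, $y=z$, $z=0$ are each disposed of by the hypothesis, triviality, or closure. As for what each route buys: the paper's citation keeps the survey lean, while your argument makes the characterization self-contained and shows in passing that the sparseness property of Lemma~\ref{lemma: arf te els non-gaps separats} already follows from the two-term condition alone, which is a pleasant strengthening of the way that lemma is stated.
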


\begin{lemma}
The numerical semigroup $\Lambda$ is Arf if and only if
for any $l\in\Lambda$, the set
$S(l)=\{l'-l: l'\in\Lambda, l'\geq l\}$ is a
numerical
semigroup.
\end{lemma}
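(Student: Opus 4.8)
The plan is to reduce the assertion ``$S(l)$ is a numerical semigroup'' to a single closure condition and then recognize that condition as precisely the Arf property. First I would dispose of the two easy axioms for $S(l)$ at a fixed $l\in\Lambda$: taking $l'=l$ gives $0\in S(l)$, and since $\Lambda$ has finite complement in ${\mathbb N}_0$, every sufficiently large $m$ satisfies $l+m\in\Lambda$, so $S(l)$ also has finite complement. Hence for each fixed $l\in\Lambda$ the set $S(l)$ is a numerical semigroup if and only if it is closed under addition, and this is the only thing left to analyze.

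Next I would translate closure under addition of $S(l)$ back into a statement about $\Lambda$. A typical element of $S(l)$ has the form $l'-l$ with $l'\in\Lambda$ and $l'\geq l$; writing two such elements as $l_1-l$ and $l_2-l$, their sum lies in $S(l)$ exactly when $l+(l_1-l)+(l_2-l)=l_1+l_2-l\in\Lambda$. Thus $S(l)$ is closed under addition if and only if $l_1+l_2-l\in\Lambda$ for all $l_1,l_2\in\Lambda$ with $l_1,l_2\geq l$. The whole lemma therefore reduces to the claim that $\Lambda$ is Arf if and only if $l_1+l_2-l\in\Lambda$ whenever $l\leq l_1$ and $l\leq l_2$ are all non-gaps.

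I would then match this with the definition of Arf directly, proving each implication by transcription. For the direction ``Arf implies each $S(l)$ closed'', I would fix $l\in\Lambda$ and elements $l_1,l_2\geq l$ of $\Lambda$, order them as $l_1\geq l_2$ (the sum condition is symmetric), and write $l_1=\lambda_i$, $l_2=\lambda_j$, $l=\lambda_k$; the inequalities force $i\geq j\geq k$, so the Arf condition $\lambda_i+\lambda_j-\lambda_k\in\Lambda$ gives $l_1+l_2-l\in\Lambda$. For the converse, I would take arbitrary indices $i\geq j\geq k$, set $l=\lambda_k$, $l_1=\lambda_i$, $l_2=\lambda_j$ so that $l_1,l_2\geq l$, and invoke closure of $S(\lambda_k)$ to obtain $\lambda_i+\lambda_j-\lambda_k\in\Lambda$, which is exactly the defining Arf inequality.

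I do not anticipate any genuine obstacle: once $S(l)$ is seen to satisfy the $0$-containment and finite-complement axioms for free, both directions are a mechanical passage between the Arf inequality and the addition-closure of $S(l)$. The only point deserving a little care is the symmetry of $l_1,l_2$ together with their ordering against $l$, which is what lets the indices be arranged as $i\geq j\geq k$ so that the definition can be applied verbatim.
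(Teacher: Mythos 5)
Your proposal is correct and follows essentially the same route as the paper's proof: both verify the $0$-containment and finite-complement axioms trivially, reduce the question to closure of $S(l)$ under addition, and translate that closure back and forth into the Arf inequality $\lambda_i+\lambda_j-\lambda_k\in\Lambda$ for $i\geq j\geq k$ by the same choice of indices. The only cosmetic difference is that you isolate the equivalence ``$S(l)$ closed under addition $\Longleftrightarrow$ $l_1+l_2-l\in\Lambda$ for all non-gaps $l_1,l_2\geq l$'' as an explicit intermediate step, which the paper carries out inline.
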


\begin{proof}
Suppose $\Lambda$ is Arf.
Then $0\in S(l)$ and
if $m_1=l'-l$, $m_2=l''-l$
with $l',l''\in \Lambda$
and $l'\geq l$,
$l''\geq l$,
then $m_1+m_2=l'+l''-l-l$.
Since $\Lambda$ is Arf,
$l'+l''-l\in\Lambda$
and it is larger than or equal to $l$.
Thus, $m_1+m_2\in S(l)$.
The finiteness of the complement 
of $S(l)$ is a consequence of the finiteness of the complement of $\Lambda$.

On the other hand,
if $\Lambda$ is such that
$S(l)$
is a numerical semigroup
for any $l\in\Lambda$
then, if $\lambda_i\geq\lambda_j\geq\lambda_k$
are in $\Lambda$, we will have
$\lambda_i-\lambda_k\in S(\lambda_k)$,
$\lambda_j-\lambda_k\in S(\lambda_k)$,
$\lambda_i+\lambda_j-\lambda_k-\lambda_k\in S(\lambda_k)$
and therefore $\lambda_i+\lambda_j-\lambda_k\in\Lambda$.
\end{proof}

\subsubsection{Numerical semigroups generated by an interval}
\label{section: interval numerical semigroups}

A numerical semigroup $\Lambda$
is \alert{generated by an interval}
$\{i, i+1, \dots, j\}$
with $i,j\in{\mathbb N}_0$, $i\leq j$ if
$$\Lambda=\{n_i i+n_{i+1}(i+1)+\dots+n_j j :
n_i,n_{i+1},\dots,n_j\in{\mathbb N}_0\}.$$

A study of
semigroups
generated by intervals
was carried out
by Garc\'\i a-S\'anchez and
Rosales
in \cite{GaRo:interval}.

\begin{example}
The Weierstrass semigroup at the rational point at infinity
of the Hermitian curve (Example~\ref{example:hermite})
is generated by $q$ and $q+1$.
So, it is an example
of numerical semigroup generated by an interval.
\end{example}

\begin{lemma}
\label{lemma: forma semigrup generat per intervals}
The semigroup $\Lambda_{\{i,\dots,j\}}$
generated by the interval $\{i,i+1,\dots,j\}$
satisfies
$$\Lambda_{\{i,\dots,j\}}=\bigcup_{k\geq 0}\{ki,ki+1,ki+2,\dots ,kj\}.$$
\end{lemma}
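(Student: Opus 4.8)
The plan is to decompose the semigroup according to the number of generators used in each representation. Write every element of $\Lambda_{\{i,\dots,j\}}$ in the form $\sum_{\ell=i}^{j} n_\ell\,\ell$ with $n_\ell\in{\mathbb N}_0$, and set $k=\sum_{\ell=i}^j n_\ell$ to be the total number of generators used, counted with multiplicity. Then such an element is precisely a sum of $k$ numbers, each chosen from $\{i,i+1,\dots,j\}$; conversely, collecting equal summands, any sum of $k$ such numbers lies in $\Lambda_{\{i,\dots,j\}}$. Thus, letting $S_k$ denote the set of integers expressible as a sum of exactly $k$ elements of $\{i,\dots,j\}$, we have $\Lambda_{\{i,\dots,j\}}=\bigcup_{k\geq 0}S_k$, with $S_0=\{0\}$.

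The heart of the proof is then the identity $S_k=\{ki,ki+1,\dots,kj\}$ for each $k$. First I would establish the inclusion $\subseteq$: a sum of $k$ terms each between $i$ and $j$ is at least $ki$ and at most $kj$, so it lies in the claimed interval. For the reverse inclusion $\supseteq$, I would use a balancing argument. Given an integer $m$ with $ki\leq m\leq kj$, start from the representation consisting of $k$ copies of $i$, whose value $ki$ must be increased by $d=m-ki$, where $0\leq d\leq k(j-i)$. Since each of the $k$ summands can be raised from $i$ to at most $j$, that is, by at most $j-i$, the total available increment is $k(j-i)\geq d$; distributing $d$ among the summands one unit at a time produces a representation of $m$ as a sum of $k$ elements of $\{i,\dots,j\}$. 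Hence every integer in $\{ki,\dots,kj\}$ lies in $S_k$.

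Combining the two parts yields $\Lambda_{\{i,\dots,j\}}=\bigcup_{k\geq 0}S_k=\bigcup_{k\geq 0}\{ki,ki+1,\dots,kj\}$, as claimed. The only step requiring care is the reverse inclusion $S_k\supseteq\{ki,\dots,kj\}$; everything else is bookkeeping. Even there the argument is elementary: because consecutive generators $\ell$ and $\ell+1$ differ by exactly $1$, replacing one summand by the next-larger generator changes the total by a single unit, so for fixed $k$ the set of attainable sums has no gaps between its minimum $ki$ and its maximum $kj$. I expect no genuine obstacle here; the interval structure of the generating set is precisely what forces the block description on the right-hand side.
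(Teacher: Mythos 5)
Your proof is correct, and it is worth pointing out that it does more than the paper does: the paper offers no proof of this lemma at all, stating only that it ``is a reformulation of'' Lemma~1 of Garc\'{\i}a-S\'anchez and Rosales \cite{GaRo:interval}. Your argument is self-contained: you stratify $\Lambda_{\{i,\dots,j\}}=\bigcup_{k\geq 0}S_k$ by the number $k$ of summands counted with multiplicity, observe the easy inclusion $S_k\subseteq\{ki,ki+1,\dots,kj\}$, and prove the reverse inclusion by the balancing argument, starting from $k$ copies of $i$ and raising summands one unit at a time (each summand can absorb at most $j-i$ units, for a total capacity $k(j-i)\geq m-ki$), so that every integer of the interval is reached. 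This is exactly the content of the cited result, which in \cite{GaRo:interval} is phrased as $\langle a,a+1,\dots,a+x\rangle=\{na+t: n\geq 0,\ 0\leq t\leq nx\}$; your $S_k$ is precisely their set of elements with $n=k$, and your unit-step filling is the standard proof of that statement. The edge cases $k=0$ (giving $\{0\}$) and the degenerate interval $i=j$ (where each $S_k$ is a single point) pass through your argument unchanged, so nothing is missing: what you have written supplies the proof that the chapter imports by citation.
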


This lemma is a reformulation of
\cite[Lemma 1]{GaRo:interval}.
In the same reference we can find the next result
\cite[Theorem 6]{GaRo:interval}.

\begin{lemma}
\label{lemma: intersection interval symmetric}
$\Lambda_{\{i,\dots,j\}}$
is symmetric
if and only if
$i\equiv 2\mbox{ mod } j-i.$
\end{lemma}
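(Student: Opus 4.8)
The plan is to read off the genus $g$ and the conductor $c$ of $\Lambda_{\{i,\dots,j\}}$ directly from the union decomposition of Lemma~\ref{lemma: forma semigrup generat per intervals}, and then simply to test the defining equality $c=2g$ of symmetry. Throughout I set $d=j-i$ and treat the main case $i\geq 2$; the degenerate case $i=1$ gives $\Lambda=\mathbb{N}_0$ (symmetric) and should be recorded separately, while the interval consisting of consecutive integers guarantees coprimality, so we are genuinely dealing with a numerical semigroup and $d\geq 1$.

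By Lemma~\ref{lemma: forma semigrup generat per intervals} we have $\Lambda_{\{i,\dots,j\}}=\bigcup_{k\geq 0}\{ki,ki+1,\dots,kj\}$, so the gaps are precisely the integers lying strictly between one block $\{ki,\dots,kj\}$ and the next block $\{(k+1)i,\dots,(k+1)j\}$. A short count shows that this $k$-th window contributes $i-1-kd$ gaps whenever that quantity is positive. Since $i-1-kd$ is strictly decreasing in $k$ (because $d\geq 1$), once a window closes up all later windows do too; hence, writing $K=\lfloor (i-2)/d\rfloor$ for the index of the last window still carrying a gap, the conductor is $c=(K+1)i$ and the genus is $g=\sum_{k=0}^{K}(i-1-kd)$.

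Summing this arithmetic progression gives $g=(K+1)(i-1)-d\,K(K+1)/2$, whence $2g=(K+1)(2i-2-dK)$, while $c=(K+1)i$. For $i\geq 2$ we have $K\geq 0$, so the factor $K+1$ is positive and may be cancelled; the symmetry criterion $c=2g$ then reduces to $i=2i-2-dK$, that is, $i-2=dK=d\lfloor (i-2)/d\rfloor$. This identity holds if and only if $d$ divides $i-2$, i.e. if and only if $i\equiv 2$ modulo $j-i$, which is the asserted equivalence.

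The substantive part of the argument is the bookkeeping that produces the closed forms for $c$ and $g$, and that is where I expect the only real obstacle to lie. I must justify carefully that the gap count of window $k$ is exactly $i-1-kd$ (watching the boundary values where it is $0$ or negative) and that the windows close monotonically, so that $(K+1)i$ is genuinely the conductor rather than merely the left endpoint of some finite block; this is the step most prone to off-by-one errors. I also need $K\geq 0$ to legitimise cancelling $K+1$, which is precisely where the hypothesis $i\geq 2$ is used, the excluded value $i=1$ being the trivial semigroup $\mathbb{N}_0$. Once the formulas for $c$ and $g$ are pinned down, the remaining step is the routine algebra displayed above.
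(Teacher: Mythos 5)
Your argument is correct in the case it treats, and it takes a route the paper does not even attempt: the paper offers no proof of this lemma at all, simply citing \cite[Theorem~6]{GaRo:interval}. Your proof makes the statement self-contained within the chapter, resting only on Lemma~\ref{lemma: forma semigrup generat per intervals}: from the block decomposition $\Lambda_{\{i,\dots,j\}}=\bigcup_{k\geq 0}\{ki,\dots,kj\}$ you read off that the gaps split into disjoint windows, the $k$-th of size $\max(0,i-1-kd)$ with $d=j-i$ (an integer strictly between block $k$ and block $k+1$ can lie in no other block, since later blocks start at $\geq(k+1)i$ and earlier ones end at $\leq kj$), that the windows close monotonically, and hence that $c=(K+1)i$ and $g=(K+1)(i-1)-dK(K+1)/2$ with $K=\lfloor(i-2)/d\rfloor$. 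Testing $c=2g$ and cancelling the positive factor $K+1$ then gives $dK=i-2$, which holds if and only if $d\mid i-2$. All of these steps check out; as a sanity check, for $\langle q,q+1\rangle$ your formulas return $c=q(q-1)$ and $g=q(q-1)/2$, as they should. An alternative route closer to how the paper handles symmetry elsewhere would go through the gap-pairing characterization of Lemma~\ref{lemma:symmetric-caracteritzacio}, but your direct computation is no longer and produces closed forms for $c$ and $g$ of interval semigroups as a by-product.

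One caveat about the degenerate case you set aside. For $i=1<j$ the semigroup is ${\mathbb N}_0$, which \emph{is} symmetric under the paper's definition (its conductor and genus are both $0$, so $c=2g$), whereas $1\equiv 2\pmod{j-1}$ holds only when $j=2$; thus for $i=1$ and $j\geq 3$ the stated equivalence is literally false. The lemma must therefore be read with the implicit hypothesis that $\Lambda_{\{i,\dots,j\}}$ is non-trivial (equivalently $i\geq 2$), which is exactly the regime your computation covers. So your remark that the case $i=1$ should be recorded separately understates the situation: it is an exception to the statement as written, not an instance of it, and this should be said explicitly when the proof is written up.
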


\begin{lemma}
The only numerical semigroups
which are generated by an interval and Arf,
are the 
semigroups which are equal to
$\{0\}\cup\{i\in{\mathbb N}_0: i\geq c\}$ for some non-negative integer $c$.
\end{lemma}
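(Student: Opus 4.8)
The plan is to prove the two inclusions separately, the substantive one being that an interval-generated Arf semigroup can only have the stated shape. The two tools I will lean on are the structural description of Lemma~\ref{lemma: forma semigrup generat per intervals}, namely $\Lambda_{\{i,\dots,j\}}=\bigcup_{k\geq 0}\{ki,ki+1,\dots,kj\}$, and the sparseness consequence of Lemma~\ref{lemma: arf te els non-gaps separats}: in an Arf semigroup, if two consecutive integers $m,m+1$ both lie in $\Lambda$, then $m\geq c$.

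For the forward direction I would write $\Lambda=\Lambda_{\{i,\dots,j\}}$ and dispose first of the degenerate cases. If $i=0$ then $1\in\Lambda$ forces $\Lambda={\mathbb N}_0$, and if $i=j$ then $\Lambda=\langle i\rangle$, which has finite complement only for $i=1$, again giving ${\mathbb N}_0$; both are of the stated form. So assume $1\leq i<j$. Then the block for $k=1$ in the structural description is $\{i,i+1,\dots,j\}$, which already contains the two consecutive non-gaps $i$ and $i+1$. Applying the sparseness consequence gives $i\geq c$. But $i=\lambda_1$ is the least positive element of $\Lambda$, so $1,\dots,i-1$ are gaps; and since $i\geq c$ every integer $\geq i$ lies in $\{m:m\geq c\}\subseteq\Lambda$. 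Hence $\Lambda=\{0\}\cup\{m\geq i\}$, which is the claimed form with conductor $i$.

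For the reverse inclusion I would check that each $\Lambda=\{0\}\cup\{m\geq c\}$ is indeed both interval-generated and Arf. For $c\geq 1$ it equals $\Lambda_{\{c,c+1,\dots,2c-1\}}$ by Lemma~\ref{lemma: forma semigrup generat per intervals} (the blocks $\{kc,\dots,k(2c-1)\}$ are adjacent or overlapping from $k=1$ onward, so they glue into $\{m\geq c\}$; the case $c=0$ gives the same set as $c=1$, namely ${\mathbb N}_0$). It is Arf because for positive indices $i\geq j$ one has $\lambda_i,\lambda_j\geq c$, whence $2\lambda_i-\lambda_j\geq\lambda_i\geq c$ lies in $\Lambda$, so Lemma~\ref{lemma: charactarf 2j-i} applies. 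The only genuine obstacle is the forward step: one must notice that a nondegenerate generating interval unavoidably produces the adjacent pair $i,i+1$ low down, so that Arf-sparseness collapses the whole interval case to the semigroups whose $\lambda_1$ equals the conductor; everything else is bookkeeping about the degenerate intervals.
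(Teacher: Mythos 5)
Your proof is correct and follows exactly the route the paper takes: the paper's proof is a one-line citation of Lemma~\ref{lemma: arf te els non-gaps separats} and Lemma~\ref{lemma: forma semigrup generat per intervals}, and your argument is precisely the fleshed-out version of that (consecutive non-gaps $i,i+1$ in the first block force $i\geq c$), with the easy converse inclusion written out as well.
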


\begin{proof}It is a consequence of
Lemma~\ref{lemma: arf te els non-gaps separats}
and Lemma~\ref{lemma: forma semigrup generat per intervals}.
\end{proof}

\begin{lemma}
The unique numerical semigroup
which is pseudo-symmetric and generated by an interval 
is $\{0,3,4,5,6,\dots\}$.
\end{lemma}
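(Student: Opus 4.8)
The plan is to use the explicit description of interval-generated semigroups from Lemma~\ref{lemma: forma semigrup generat per intervals} to read off the conductor and genus as functions of the interval, and then impose the defining relation $c=2g-1$ of a pseudo-symmetric semigroup. Write $\Lambda=\Lambda_{\{i,\dots,j\}}$ and set $d=j-i$. First I would dispose of the degenerate cases: if $i\leq 1$ then $1\in\Lambda$ and $\Lambda={\mathbb N}_0$, which has genus $0$ and is not pseudo-symmetric; and for the complement to be finite when $i\geq 2$ one must have $d\geq 1$. So I assume $i\geq 2$ and $d\geq 1$.

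By Lemma~\ref{lemma: forma semigrup generat per intervals}, $\Lambda=\bigcup_{k\geq 0}\{ki,ki+1,\dots,kj\}$, so the gaps are exactly the integers lying strictly between consecutive blocks, namely the sets $G_k=\{kj+1,\dots,(k+1)i-1\}$, which are nonempty precisely when $kd\leq i-2$ and then have cardinality $i-1-kd$. Let $K=\lfloor (i-2)/d\rfloor$ be the largest index with $G_k\neq\emptyset$ and put $m=K+1$. Since the blocks merge into a single ray from index $K+1$ on, the conductor is $c=(K+1)i=mi$, while summing the block sizes gives $g=\sum_{k=0}^{K}(i-1-kd)=m(i-1)-d\,m(m-1)/2$. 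I would then substitute into $c=2g-1$; after simplification this collapses to the single relation $m\bigl(i-2-d(m-1)\bigr)=1$. As $m\geq 1$ is an integer dividing $1$, necessarily $m=1$, whence $i-2=1$, i.e. $i=3$, and $m=1$ forces $K=0$, i.e. $d\geq 2$. In every such case $\Lambda=\{0,3,4,5,6,\dots\}$, and conversely this semigroup has $c=3$ and $g=2$, so $c=2g-1$ and it is pseudo-symmetric (it is generated, for instance, by the interval $\{3,4,5\}$).

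The routine part is the genus summation and the algebra reducing $c=2g-1$ to the divisibility relation $m\bigl(i-2-d(m-1)\bigr)=1$. The step that needs the most care is the bookkeeping of the degenerate and boundary cases: checking that the block description lists all gaps without overlaps, that $K\geq 0$ holds exactly when $i\geq 2$, and that all the intervals $\{i,\dots,j\}$ with $i=3$ and $d\geq 2$ indeed collapse to the same semigroup $\{0,3,4,5,\dots\}$, so that the asserted uniqueness is a statement about the semigroup and not about the particular generating interval.
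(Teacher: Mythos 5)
Your proof is correct, but it follows a genuinely different route from the paper's. The paper never computes the conductor or genus of $\Lambda_{\{i,\dots,j\}}$: it invokes the gap-symmetry characterization of pseudo-symmetric semigroups (Lemma~\ref{lemma:caracteritzaciopseudosimetrics}) and observes that, by Lemma~\ref{lemma: forma semigrup generat per intervals}, the successive intervals of gaps shrink by $j-i$ while the successive intervals of non-gaps grow by $j-i$; placing $(c-1)/2$ at the first or last gap of a gap-interval then forces incompatible lengths on the neighbouring intervals unless $(c-1)/2=1$. You instead work straight from the definition $c=2g-1$: from the block decomposition you extract the closed forms $c=mi$ and $g=m(i-1)-d\,m(m-1)/2$ with $d=j-i$ and $m=\lfloor (i-2)/d\rfloor+1$, and the pseudo-symmetry condition collapses to the divisibility relation $m\bigl(i-2-d(m-1)\bigr)=1$, forcing $m=1$ and $i=3$. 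Your computations check out (including the disjointness of the gap blocks $G_k$, the consistency condition $d\geq 2$ when $m=1$, and the verification that all intervals $\{3,\dots,j\}$ with $j\geq 5$ yield the same semigroup). What your approach buys is self-containedness and reusability: it does not need Lemma~\ref{lemma:caracteritzaciopseudosimetrics} at all, and the same formulas with $c=2g$ instead of $c=2g-1$ give $i-2=d(m-1)$, recovering the symmetry criterion $i\equiv 2 \bmod (j-i)$ of Lemma~\ref{lemma: intersection interval symmetric} for free. What the paper's argument buys is brevity given the machinery already in place, and uniformity with its treatment of the symmetric and Arf cases, at the price of a more delicate case analysis about where $(c-1)/2$ can sit.
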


\begin{proof}
By Lemma~\ref{lemma: forma semigrup generat per intervals},
for the non-trivial semigroup $\Lambda_{\{i,\dots,j\}}$
generated by the interval $\{i,\dots,j\}$, 
the intervals of gaps between $\lambda_0$
and the conductor satisfy that the length of each interval
is equal to the length of the previous interval minus
$j-i$.
On the other hand, the intervals of non-gaps
between $1$ and $c-1$ satisfy that the length of each interval is equal to the length of the previous interval plus $j-i$.

Now, by Lemma~\ref{lemma:caracteritzaciopseudosimetrics},
$(c-1)/2$ must be the first gap or the last gap of 
an interval of gaps.
Suppose that it is the first gap of an interval of $n$ gaps.
If it is equal to $1$ then $c=3$ and $\Lambda=\{0,3,4,5,6,\dots\}$.
Otherwise $(c-1)/2>\lambda_1$.
Then, if $\Lambda$ is pseudo-symmetric, the previous interval of non-gaps has length
$n-1$. Since $\Lambda$ is generated by an interval,
the first interval of non-gaps after 
$(c-1)/2$ must have length $n-1+j-i$ 
and since $\Lambda$ is pseudo-symmetric the interval of 
gaps before $(c-1)/2$ must have the same length. 
But since $\Lambda$ is generated by an interval,
the interval of gaps previous to $(c-1)/2$ must have length $n+j-i$.
This is a contradiction.
The same argument proves that 
$(c-1)/2$ can not be the last gap of an interval of gaps.
So, the only possibility for a pseudo-symmetric 
semigroup generated by an interval is when 
$(c-1)/2=1$, that is, when $\Lambda=\{0,3,4,5,6,\dots\}$.
\end{proof}

\subsubsection{Acute numerical semigroups}
\label{section: acute numerical semigroups}

\begin{definition}We say that a numerical semigroup is \alert{ordinary}
if it is
equal to $$\{0\}\cup\{i\in{\mathbb N}_0: i\geq c\}$$ for some non-negative integer $c$.
\end{definition}

Almost all rational points on a curve of genus $g$ 
over an algebraically closed field have
Weierstrass semigroup
of the form
$\{0\}\cup\{i\in{\mathbb N}_0: i\geq g+1\}$.
Such points are said to be \alert{ordinary}.
This is why we call these numerical semigroups
ordinary \cite{Goldschmidt,FaKr,Stichtenoth:AFFaC}.
Caution must be taken when the characteristic of the ground field
is $p>0$, since there exist curves with infinitely many non-ordinary 
points \cite{StVo}.

Notice that ${\mathbb N}_0$ is an ordinary numerical semigroup.
It will be called the \alert{trivial} numerical semigroup.

\begin{definition}
Let $\Lambda$ be a numerical semigroup different from ${\mathbb N}_0$
with enumeration $\lambda$, genus $g$ and conductor $c$.
The element $\lambda_{\lambda^{-1}(c)-1}$ will be called
the \alert{dominant} 
of the semigroup and will be denoted $d$.
For each $i\in{\mathbb N}_0$ let $g(i)$
be the number of gaps which are
smaller than $\lambda_i$. In particular,
$g(\lambda^{-1}(c))=g$ and
$g(\lambda^{-1}(d))=g'<g$.
If $i$ is the smallest integer for which
$g(i)=g'$ then $\lambda_i$
is called the \alert{subconductor}
of $\Lambda$ and denoted $c'$.
\end{definition}

\begin{remark}Notice that if $c'>0$, then $c'-1\not\in\Lambda$. Otherwise we would have
$g(\lambda^{-1}(c'-1))=g(\lambda^{-1}(c'))$ and $c'-1<c'$.
Notice also that all integers between $c'$ and $d$
are in $\Lambda$ because
otherwise $g(\lambda^{-1}(c'))<g'$.
\end{remark}

\begin{remark}
\label{remark: ordinary implica d,c' son 0}
For a numerical semigroup $\Lambda$ different from ${\mathbb N}_0$
the following are equivalent:
\begin{description}
\item[{\bf (i)}] $\Lambda$ is ordinary,
\item[{\bf (ii)}] the dominant of $\Lambda$ is $0$,
\item[{\bf (iii)}] the subconductor of $\Lambda$ is $0$.
\end{description}
Indeed,
{\bf (i)}$\Longleftrightarrow${\bf (ii)}
and {\bf (ii)}$\Longrightarrow${\bf (iii)}
are obvious.
Now, suppose {\bf (iii)} is satisfied.
If the dominant is larger than or equal to $1$
it means that $1$ is in $\Lambda$ and so $\Lambda={\mathbb N}_0$ a contradiction.
\end{remark}

\begin{definition}
If $\Lambda$ is a non-ordinary numerical semigroup
with enumeration $\lambda$ and
with subconductor $\lambda_i$ then
the element $\lambda_{i-1}$ will be called the \alert{subdominant} 
and
denoted $d'$.
\end{definition}

It is well defined because of Remark~\ref{remark: ordinary implica d,c' son 0}.

\begin{definition}\label{definition:acute}
A numerical semigroup $\Lambda$
is said to be \alert{acute}
if
$\Lambda$ is ordinary or if $\Lambda$ is non-ordinary and
its conductor $c$, its subconductor $c'$, its dominant $d$ and
its subdominant $d'$
satisfy
$c-d\leq c'-d'$.
\end{definition}

Roughly speaking,
a numerical semigroup is acute if
the last interval of gaps before
the conductor is smaller than the previous interval of gaps.

\begin{example}
For the Hermitian curve over ${\mathbb F}_{16}$
the Weierstrass semigroup at the unique point
at infinity is
$$\{0,4,5,8,9,10\}\cup\{i\in{\mathbb N}_0: i\geq 12\}.$$
In this case
$c=12$, $d=10$, $c'=8$
and $d'=5$ and it
is easy to check that it is an acute numerical semigroup.
\end{example}

\begin{example}
For the Weierstrass semigroup
at the rational point $P_0$
of the Klein quartic
in Example~\ref{example:klein}
we have
$c=5$, $d=c'=3$ and $d'=0$.
So, it is an example of a non-ordinary
acute numerical
semigroup.
\end{example}

\begin{lemma}
\label{lemma: exemples acute}
Let $\Lambda$ be a numerical semigroup.
\begin{enumerate}
\item If $\Lambda$ is symmetric then it is acute.
\item If $\Lambda$ is pseudo-symmetric then it is acute.
\item If $\Lambda$ is Arf then it is acute.
\item If $\Lambda$ is generated by an interval then it is acute.
\end{enumerate}
\end{lemma}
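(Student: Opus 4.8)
The plan is to handle all four parts within a single framework. First I would dispose of the ordinary case, which is acute by definition, and assume throughout that $\Lambda$ is non-ordinary, so that by Remark~\ref{remark: ordinary implica d,c' son 0} the dominant $d$, subconductor $c'$ and subdominant $d'$ are genuinely defined, with $0\le d'<c'\le d<c$. The key translation is that the acute inequality $c-d\le c'-d'$ compares the length $c-1-d$ of the last interval of gaps $\{d+1,\dots,c-1\}$ with the length $c'-1-d'$ of the preceding interval of gaps $\{d'+1,\dots,c'-1\}$, since $[c',d]$ is a maximal run of non-gaps. Because $c-1$ is always a gap and $c'-1$ is a gap (the Remark), both lengths are at least $1$; that is, $c-d\ge 2$ and $c'-d'\ge 2$ hold unconditionally.

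For parts (1) and (2) I would prove the strongest possible statement, namely that the last interval of gaps degenerates to the single gap $\{c-1\}$, i.e. $c-2\in\Lambda$ and hence $c-d=2$. Indeed, if $c-2$ were a gap, then the symmetry characterization (Lemma~\ref{lemma:symmetric-caracteritzacio} in the symmetric case) would force $c-1-(c-2)=1$ to be a non-gap, whence $\Lambda={\mathbb N}_0$, contradicting non-ordinariness. In the pseudo-symmetric case I would apply Lemma~\ref{lemma:caracteritzaciopseudosimetrics} in the same way, the only subtlety being the excluded middle value: $c-2=(c-1)/2$ forces $c=3$, and the unique pseudo-symmetric semigroup with $c=3$ is $\{0,3,4,5,\dots\}$, which is ordinary and already excluded. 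In either case $c-d=2\le c'-d'$, so $\Lambda$ is acute.

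For part (3), the Arf hypothesis first gives, via sparseness (the consequence of Lemma~\ref{lemma: arf te els non-gaps separats}), that there are no two consecutive non-gaps below $c$; since every integer of $[c',d]$ is a non-gap, this collapses that run to a point, $c'=d$. Now $d'$ and $d$ are non-gaps, so applying Lemma~\ref{lemma: arf te els non-gaps separats} with step $d-d'$ yields $d+(d-d')=2d-d'\in\Lambda$. Since $d$ is the dominant, the next non-gap after $d$ is exactly $c$, so $c\le 2d-d'$, i.e. $c-d\le d-d'=c'-d'$, which is the acute inequality.

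For part (4), I would read off the block structure from Lemma~\ref{lemma: forma semigrup generat per intervals}: the non-gaps of $\Lambda_{\{i,\dots,j\}}$ form the blocks $\{ki,ki+1,\dots,kj\}$, so the interval of gaps separating the $k$-th and $(k+1)$-th block has length $i-1-k(j-i)$, which is non-increasing in $k$ (strictly decreasing by $j-i$ when $i<j$). The last and the preceding intervals of gaps in the acute condition are two such consecutive intervals, so the last is no longer than the preceding one, giving $c-d\le c'-d'$. The degenerate cases $i=j$ (forcing $\Lambda={\mathbb N}_0$ by coprimality) and $d=0$ are ordinary and handled at the outset. I expect the main obstacle to be bookkeeping rather than any deep inequality: correctly matching $c,d,c',d'$ with the endpoints of these two gap intervals in each case, and in particular verifying in part (4) that the maximal non-gap run $[c',d]$ is a single block and that the two relevant gap intervals are genuinely consecutive.
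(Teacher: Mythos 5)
Your proposal is correct and follows essentially the same route as the paper's proof: force $d=c-2$ via the (pseudo-)symmetry characterization in parts (1)--(2), use the Arf property to produce a non-gap strictly above the dominant in part (3), and read the acute inequality off the block structure of Lemma~\ref{lemma: forma semigrup generat per intervals} in part (4). The only cosmetic difference is in part (3), where the paper applies the Arf condition directly to the triple $(d,c',d')$ to get $d+c'-d'\geq c$, while you first collapse $c'=d$ by sparseness and then invoke Lemma~\ref{lemma: arf te els non-gaps separats}; the two arguments are equivalent.
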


\begin{proof}
If $\Lambda$ is ordinary then it is obvious. Let us suppose that $\Lambda$
is a non-ordinary semigroup
with
genus $g$, conductor $c$, subconductor $c'$, dominant $d$ and subdominant~$d'$.
\begin{enumerate}
\item Suppose that $\Lambda$ is symmetric. We know
by Lemma~\ref{lemma:symmetric-caracteritzacio}
that a numerical semigroup $\Lambda$ is symmetric if and only if
for any non-negative integer $i$, if $i$ is a gap,
then $c-1-i\in\Lambda$. If moreover it is not ordinary, then $1$ is a gap. So,
$c-2\in\Lambda$ and it is precisely the dominant.
Hence, $c-d=2$. Since $c'-1$ is a gap,
$c'-d'\geq 2=c-d$
and so $\Lambda$ is acute.
\item 
Suppose that $\Lambda$ is pseudo-symmetric.
If $1=(c-1)/2$ 
then $c=3$ and $\Lambda=\{0,3,4,5,6,\dots\}$ which is ordinary.
Else if $1\neq(c-1)/2$ then the proof 
is equivalent to the one for symmetric semigroups.
\item Suppose $\Lambda$ is Arf.
Since $d\geq c'> d'$, then
$d+c'-d'$ is in $\Lambda$
and it is strictly larger than the dominant $d$. Hence it is larger than
or equal
to $c$. So,
$d+c'-d'\geq c$
and $\Lambda$ is acute.
\item Suppose that $\Lambda$ is generated by the interval
$\{i, i+1, \dots , j\}$. Then, by
Lemma~\ref{lemma: forma semigrup generat per intervals},
there exists $k$ such that
$c=k i$, $c'=(k-1) i$, $d=(k-1) j$ and
$d'=(k-2) j$.
So, $c-d=k(i-j)+j$ while
$c'-d'=k(i-j)-i+2j$. Hence,
$\Lambda$ is acute.
\end{enumerate}
\end{proof}

\begin{figure}
\centering
\setlength{\unitlength}{.5cm}
\newcommand\ambcolor[1]{#1}
\newcommand\black[1]{#1}

{\small
\begin{picture}(16.,20.)

\put(8.,19.){\makebox(0,0){\ambcolor{\fbox{\black{Numerical semigroups}}}}}

\ambcolor{
\qbezier(8.,18.5)(8.,18.5)(8.,16.45)}

\put(8.,16.){\makebox(0,0){\ambcolor{\fbox{\black{Acute}}}}}
\ambcolor{\qbezier(8.,15.5)(8.,15.5)(8.,13.45)}

\put(8.,13.){\makebox(0,0){\ambcolor{\fbox{\black{Irreducible}}}}}
\ambcolor{
\qbezier(8.,12.5)(8.,12.5)(6.,10.5)
\qbezier(8.,12.5)(8.,12.5)(10.,10.5)}

\ambcolor{
\qbezier(8.,15.5)(8.,15.6)(2.,10.45)
\qbezier(8.,15.5)(8.,15.6)(14.,10.55)
}

\put(6.,10.){\makebox(0,0){\ambcolor{\fbox{\black{Symmetric}}}}}
\put(6.,8.8){\makebox(0,0){{\tiny\shortstack[b]{EX: Hermitian c.\\Norm-Trace c.}}}}
\put(10.,10.){\makebox(0,0){\ambcolor{\fbox{\black{Pseudo-sym}}}}}
\put(10.,8.8){\makebox(0,0){{\tiny\shortstack[b]{EX: Klein quartic.}}}}
\put(2.,10.){\makebox(0,0){\ambcolor{\fbox{\black{Arf}}}}}
\put(2.,8.8){\makebox(0,0){{\tiny\shortstack[b]{EX: Klein quartic.\\Gar-Sti  tower}}}}
\put(14.,10.){\makebox(0,0){\ambcolor{\fbox{\black{$\Lambda_{\{i,\dots,j\}}$}}}}}
\put(14.,8.8){\makebox(0,0){{\tiny\shortstack[b]{EX: Hermitian c.}}}}

\ambcolor{
\qbezier(6.,8.)(6.,8.)(4.5,6.)
\qbezier(2.,8.)(2.,8.)(8.,6.)
\qbezier(14.,8.)(14.,8.)(11.5,6.2)
\qbezier(6.,8.)(6.,8.)(11.5,6.2)
\qbezier(2.,8.)(2.,8.)(4.5,6.)
\qbezier(14.,8.)(14.,8.)(8.,6.)
\qbezier(10.,8.3)(2.,7.6)(0.7,4.9)
\qbezier(2.,8.)(2.,8.)(0.7,4.9)
\qbezier(10.,8.3)(14.,7.4)(15.5,4.5)
\qbezier(14.,8.)(14.,8.)(15.5,4.5)
}

\put(1.,4.){\makebox(0,0){\ambcolor{\fbox{\black{\shortstack[b]{Klein q.\\$\{0,3,4,\dots\}$}}}}}}
\put(4.2,5.5){\makebox(0,0){\ambcolor{\fbox{\black{Hyperelliptic}}}}}
\put(4.6,4.3){\makebox(0,0){{\tiny\shortstack[b]{Campillo, Farran,\\Munuera}}}}
\put(7.9,5.5){\makebox(0,0){\ambcolor{\fbox{\black{Ordinary}}}}}
\put(11.8,5.5){\makebox(0,0){\ambcolor{\fbox{\black{$\Lambda_{\{i,\dots,\frac{(k+1)i-2}{k}\}}$}}}}}
\put(11.6,4.3){\makebox(0,0){{\tiny\shortstack[b]{Garc\'\i a-S\'anchez,\\Rosales}}}}
\put(15.,4.){\makebox(0,0){\ambcolor{\fbox{\black{$\{0,3,4,\dots\}$}}}}}

\ambcolor{
\qbezier(4.5,3.8)(4.5,3.8)(8.,0.5)
\qbezier(8.,5.)(8.,5.)(8.,0.5)
\qbezier(11.5,3.8)(11.5,3.8)(8.,0.5)
}

\put(8.,0.){\makebox(0,0){\ambcolor{\fbox{\black{Trivial: ${\mathbb N}_0$}}}}}
\end{picture}
}

\caption{Diagram of semigroup classes and inclusions.}
\label{figure:inclosions}
\end{figure}

In Figure~\ref{figure:inclosions} we summarize
all the relations
we have proved
between
acute semigroups,
symmetric and pseudo-symmetric semigroups,
Arf semigroups
and semigroups generated by an interval.

\begin{remark}There exist numerical semigroups which are not acute.
For instance, $$\Lambda=\{0,6,8,9\}\cup\{i\in{\mathbb N}_0: i\geq 12\}.$$
In this case,
$c=12$, $d=9$, $c'=8$ and $d'=6$.

On the other hand there exist
numerical semigroups which are
acute and which are not
symmetric, pseudo-symmetric, Arf or interval-generated.
For example,
$$\Lambda=\{0,10,11\}\cup\{i\in{\mathbb N}_0: i\geq 15\}.$$
In this case,
$c=15$, $d=11$, $c'=10$ and $d'=0$.
\end{remark}

\subsection{Characterization}

\subsubsection{Homomorphisms of semigroups}

\alert{Homomorphisms of numerical semigroups}, that is, maps $f$ between numerical semigroups such that $f(a+b)=f(a)+f(b)$,
are exactly the scale maps $f(a)=ka$ for all $a$, for some constant $k\geq 0$.
Indeed, if $f$ is a homomorphism then
 $\frac{f(a)}{a}$ is constant since $f(ab)=a\cdot f(b)=b\cdot f(a)$.
Furthermore, the unique surjective homomorphism is the identity.
Indeed, for a semigroup $\Lambda$, the set $k\Lambda$ is a numerical semigroup
only if $k=1$.

\subsubsection{The $\oplus$ operation, the $\nu$ sequence, and the $\tau$ sequence}

Next we define three important objects describing the addition behavior of a numerical semigroup.

\begin{definition}
The operation $\oplus_\Lambda:{\mathbb N}_0\times{\mathbb N}_0\rightarrow{\mathbb N}_0$
associated to the numerical semigroup $\Lambda$ (with enumeration $\lambda$)
is defined as
$$i\oplus_\Lambda j=\lambda^{-1}(\lambda_i+\lambda_j)$$
for any $i,j\in{\mathbb N}_0$.
Equivalently,
$$\lambda_i+\lambda_j=\lambda_{i\oplus_\Lambda j}.$$
\end{definition}

The subindex referring to the semigroup
may be ommitted if the semigroup is clear by the context.
The operation $\oplus$ is obviously commutative, associative,
and has $0$ as identity element. However there is in general no inverse element.
Also, the operation $\oplus$ is compatible with the natural order of
${\mathbb N}_0$. That is, if $a<b$ then $a\oplus c<b\oplus c$ and $c\oplus
a<c\oplus b$ for any $c\in{\mathbb N}_0$.

\begin{example}
\label{example:oplus}
For the numerical semigroup
$\Lambda=\{ 0, 4, 5, 8, 9, 10, 12, 13, 14, \dots \}$
the first values of $\oplus$ are given in the next table:

$$\begin{array}{c|ccccccccc}
\oplus &0 &1 &2 &3 &4 &5 &6 &7 &\dots\\\hline
0&  0 &  1 &  2 &  3 &  4 &  5 &  6 &  7 &\dots\\
1&  1 &  3 &  4 &  6 &  7 &  8 & 10 & 11 &\dots\\
2&  2 &  4 &  5 &  7 &  8 &  9 & 11 & 12 &\dots\\
3&  3 &  6 &  7 & 10 & 11 & 12 & 14 & 15 &\dots\\
4&  4 &  7 &  8 & 11 & 12 & 13 & 15 & 16 &\dots\\
5&  5 &  8 &  9 & 12 & 13 & 14 & 16 & 17 &\dots\\
6&  6 & 10 & 11 & 14 & 15 & 16 & 18 & 19 &\dots\\
7&  7 & 11 & 12 & 15 & 16 & 17 & 19 & 20 &\dots\\
\vdots&\vdots&\vdots&\vdots&\vdots&\vdots&\vdots&\vdots&\vdots&\ddots\\
\end{array}$$

\end{example}

\begin{definition}
The partial ordering $\pleq_\Lambda$ on ${\mathbb N}_0$
associated to the numerical semigroup $\Lambda$
is defined as follows: 
$$i\pleq_\Lambda j \mbox{ if and only if } \lambda_j-\lambda_i\in\Lambda.$$
Equivalently
$$i\pleq_\Lambda j \mbox{ if and only if there exists } k \mbox{ with } i\oplus_\Lambda k=j.$$
\end{definition}

As before, if $\Lambda$ is clear by the context then the subindex may be ommitted.

\begin{definition}
Given a numerical semigroup $\Lambda$,
the set $N_i$ is defined by 
$$N_i=\{j\in{\mathbb N}_0:j\pleq_\Lambda i\}=\{j\in{\mathbb N}_0:\lambda_i-\lambda_j\in\Lambda\}.$$ 
The sequence $\nu_i$ is defined by $\nu_i=\#N_i$.
\end{definition}

\begin{example}
\label{example:nutrivial}
The $\nu$ sequence of the trivial semigroup $\Lambda={\mathbb N}_0$ is
$1,2,3,4,\dots$.
\end{example}

Next lemma states the relationship between $\oplus$ and the
$\nu$ sequence of a numerical semigroup. Its proof is obvious.

\begin{lemma}
\label{lemma:nu_from_oplus}
Let $\Lambda$ be a numerical semigroup and $\nu$ the corresponding
$\nu$ sequence. For all $i\in{\mathbb N}_0$,
$$\nu_i=\#\{(j,k)\in{\mathbb N}_0^2:j\oplus k=i\}.$$
\end{lemma}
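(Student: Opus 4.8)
The plan is to exhibit an explicit bijection between the set $N_i$, whose cardinality is $\nu_i$ by definition, and the set of pairs $\{(j,k)\in{\mathbb N}_0^2:j\oplus k=i\}$. Recall that $N_i=\{j\in{\mathbb N}_0:\lambda_i-\lambda_j\in\Lambda\}$, so that membership of $j$ in $N_i$ is exactly the condition that $\lambda_i-\lambda_j$ be a non-gap. Thus both sides count essentially the same data, the left counting the admissible first coordinates $j$ and the right counting the full pairs; the content of the lemma is that each admissible $j$ completes to exactly one pair.

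First I would observe that for a fixed $j\in N_i$ there is a unique $k\in{\mathbb N}_0$ with $j\oplus k=i$. Indeed, $j\oplus k=i$ is by definition equivalent to $\lambda_j+\lambda_k=\lambda_i$, that is, to $\lambda_k=\lambda_i-\lambda_j$; since $j\in N_i$ the right-hand side lies in $\Lambda$, and since the enumeration $\lambda$ is a bijection onto $\Lambda$ this forces $k=\lambda^{-1}(\lambda_i-\lambda_j)$. This lets me define a map $\varphi\colon N_i\to\{(j,k):j\oplus k=i\}$ by $\varphi(j)=(j,\lambda^{-1}(\lambda_i-\lambda_j))$.

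Next I would check that $\varphi$ is a bijection whose inverse is projection onto the first coordinate. If $(j,k)$ satisfies $j\oplus k=i$, then $\lambda_i-\lambda_j=\lambda_k\in\Lambda$, so $j\in N_i$ and the projection indeed lands in $N_i$. Composing $\varphi$ with the projection returns $j$ immediately, and composing the projection with $\varphi$ returns $(j,k)$ by the uniqueness of $k$ established above. Therefore $\nu_i=\#N_i=\#\{(j,k)\in{\mathbb N}_0^2:j\oplus k=i\}$.

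There is essentially no obstacle here, which is why the author can call the proof obvious: the only point requiring a moment's care is the uniqueness of $k$, which rests on $\lambda$ being a bijection, while everything else is bookkeeping. One could alternatively avoid naming a map and simply note that $j\mapsto\lambda_i-\lambda_j$ pairs each $j\in N_i$ with its complementary non-gap, and that the relation $j\oplus k=i$ records precisely these complementary pairs.
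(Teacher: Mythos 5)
Your proof is correct: the paper gives no argument at all for this lemma (it simply declares the proof obvious), and your bijection $j\mapsto\bigl(j,\lambda^{-1}(\lambda_i-\lambda_j)\bigr)$ between $N_i$ and the set of pairs $(j,k)$ with $j\oplus k=i$ is precisely the routine verification that the paper's remark alludes to. The one point genuinely needing care, the uniqueness of $k$ via the bijectivity of the enumeration $\lambda$, is exactly the point you isolate.
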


\begin{example}
If we are given the table in Example~\ref{example:oplus}
we can deduce $\nu_7$ by counting the number of
occurrences of $7$ inside the table. This is exactly $6$.
Thus, $\nu_7=6$.
Repeating the same process for all $\nu_i$ with $i< 7$
we get $\nu_0=1$, $\nu_1=2$, $\nu_2=2$, $\nu_3=3$, $\nu_4=4$,
$\nu_5=3$, $\nu_6=4$.
\end{example}

This lemma implies that any finite set in the sequence $\nu$
can be determined by a finite set of $\oplus$ values.
Indeed, to compute $\nu_i$ it is enough to know $\{j\oplus k :0\leq j,k \leq i\}$.

\begin{definition}
Given a numerical semigroup $\Lambda$ 
define its \alert{$\tau$ sequence} by
$$\tau_i=\max\{j\in{\mathbb N}_0:\mbox{ exists }k\mbox{ with }j\leq k\leq i\mbox{ and }j\oplus_\Lambda k=i\}.$$
\end{definition}
Notice that 
$\tau_i$ is the largest element $j$ in $N_i$ with $\lambda_j\leq \lambda_i/2$.
In particular, if $\lambda_i/2\in\Lambda$ then $\tau_i=\lambda^{-1}(\lambda_i/2)$.
Notice also that $\tau_i$ is $0$ if and only if $\lambda_i$ is either $0$
or a generator of $\Lambda$.

\begin{example}
In Table~\ref{table}
we show the $\nu$ sequence and the $\tau$ sequence
of the numerical semigroups generated by $4,5$
and generated by $6,7,8,17$.

\begin{table}
\centering
\subtable[$<4,5>$]
{
$\begin{array}{|cccc|}
\hline
i & \lambda_i & \nu_i & \tau_i\\
\hline
0&0&1&0\\
1&4&2&0\\
2&5&2&0\\
3&8&3&1\\
4&9&4&1\\
5&10&3&2\\
6&12&4&1\\
7&13&6&2\\
8&14&6&2\\
9&15&4&2\\
10&16&5&3\\
11&17&8&3\\
12&18&9&4\\
13&19&8&4\\
14&20&9&5\\
15&21&10&4\\
16&22&12&5\\
17&23&12&5\\
18&24&13&6\\
19&25&14&6\\
20&26&15&7\\
21&27&16&7\\
22&28&17&8\\
23&29&18&8\\
\vdots&\vdots&\vdots&\vdots
\\\hline
\end{array}$}
\subtable[$<6,7,8,17>$]
{$\begin{array}{|cccc|}
\hline
i & \lambda_i & \nu_i & \tau_i\\
\hline
0&0&1&0\\
1&6&2&0\\
2&7&2&0\\
3&8&2&0\\
4&12&3&1\\
5&13&4&1\\
6&14&5&2\\
7&15&4&2\\
8&16&3&3\\
9&17&2&0\\
10&18&4&1\\
11&19&6&2\\
12&20&8&3\\
13&21&8&3\\
14&22&8&3\\
15&23&8&3\\
16&24&9&4\\
17&25&10&4\\
18&26&11&5\\
19&27&12&5\\
20&28&13&6\\
21&29&14&6\\
22&30&15&7\\
23&31&16&7\\
\vdots&\vdots&\vdots&\vdots\\
\hline
\end{array}$}
\caption{$\nu$ and $\tau$ sequences of the numerical semigroups generated by $4,5$ and
by $6,7,8,17$}
\label{table}
\end{table}






\end{example}

One difference between the $\tau$ sequence and the $\nu$ sequence is that,
while in the $\nu$ sequence not all non-negative integers need to appear,
in the $\tau$ sequence all of them appear.
Notice for instance that $7$ does not appear in the $\nu$ sequence of the
numerical semigroup generated by $4$ and $5$ nor the numerical semigroup
generated by $6,7,8,17$.
The reason for which any non negative integer $j$ appears
in the $\tau$ sequence is that if $\lambda_i=2\lambda_j$ then $\tau_i=j$.
Furthermore,
the smallest $i$ for which $\tau_i=j$ corresponds to $\lambda_i=2\lambda_j$.

\subsubsection{Characterization of a numerical semigroup by $\oplus$}

The next result was proved in \cite{Bras:2005_AGCT,Bras:2007_IEEE_ANote}.

\begin{lemma}\label{lemma:characterization oplus}
The $\oplus$ operation uniquely determines a semigroup.
\end{lemma}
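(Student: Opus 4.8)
The plan is to prove the statement by showing that the underlying subset of ${\mathbb N}_0$ can be reconstructed from $\oplus$, so that two numerical semigroups with the same $\oplus$ operation must coincide. Throughout, write $m=\lambda_1$ for the multiplicity and let $\sigma\colon{\mathbb N}_0\to{\mathbb N}_0$ be the map $\sigma(i)=i\oplus 1$, which by definition satisfies $\lambda_{\sigma(i)}=\lambda_i+m$. The first step is to recover $m$ from $\oplus$ alone. The difference $\sigma(i)-i$ counts the non-gaps lying in the interval $(\lambda_i,\lambda_i+m]$, so it is at most $m$, with equality exactly when that interval contains no gap; since $\lambda_i\to\infty$ this holds for all large $i$. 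Hence $m=\max_i(\sigma(i)-i)$, a quantity read off from $\oplus$.

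Next I would analyse the orbit structure of $\sigma$. Because $\sigma$ adds $m$ to the value, its orbits are precisely the residue classes of $\Lambda$ modulo $m$; there are exactly $m$ of them, since $\Lambda\supseteq c+{\mathbb N}_0$ meets every class, and the $m$ orbit base points are the indices not in the image of $\sigma$, i.e.\ the indices of the Ap\'ery set $Ap(\Lambda)$. For each index $i$ let $k(i)$ be its position in its orbit (the number of times $\sigma$ can be undone), and let $a_0=0,a_1,\dots,a_{m-1}$ be the base points; both the base points and the function $k$ are determined by $\oplus$. Writing $W_r=\lambda_{a_r}$ for the Ap\'ery values (so $W_0=0$), every value decomposes as $\lambda_i=W_{r(i)}+k(i)\,m$, where $r(i)$ denotes the orbit of $i$. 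Thus $m$ together with $\{W_0,\dots,W_{m-1}\}$ recovers $\Lambda=\{W_r+km:0\le r\le m-1,\ k\ge 0\}$, and it only remains to pin down the values $W_r$.

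For this I would exploit the $\oplus$-relations among base points. Define $u(r,s)$ to be the orbit of $a_r\oplus a_s$ and $c_{r,s}=k(a_r\oplus a_s)$; both are determined by $\oplus$. Since $\lambda_{a_r\oplus a_s}=W_r+W_s$, the decomposition above gives $W_r+W_s-W_{u(r,s)}=m\,c_{r,s}$. Moreover the operation $u$ makes the set of orbits into a group isomorphic to $({\mathbb Z}/m{\mathbb Z},+)$, with identity the orbit of $0$: indeed $u(r,s)$ is the class of $\rho_r+\rho_s$, where $\rho_r=W_r\bmod m$ and the $\rho_r$ run through all residues.

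The crux, and the only place I expect real difficulty, is to conclude that these relations force the $W_r$ to be unique. Suppose $\Lambda$ and $\Lambda'$ share the same $\oplus$; then they share $m$, the base points, the operation $u$, and the constants $c_{r,s}$. Writing $W_r,W_r'$ for their respective Ap\'ery values and $D_r=W_r-W_r'$, subtracting the two copies of $W_r+W_s-W_{u(r,s)}=m\,c_{r,s}$ yields $D_{u(r,s)}=D_r+D_s$ with $D_0=0$. Thus $r\mapsto D_r$ is a homomorphism from the finite group of orbits into $({\mathbb Z},+)$; since every element of a finite group has finite order while ${\mathbb Z}$ is torsion-free, this homomorphism vanishes identically. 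Hence $W_r=W_r'$ for all $r$, and therefore $\Lambda=\Lambda'$. The substance of the proof is precisely this observation: the index-level data $\oplus$ determines the Ap\'ery \emph{values} only up to a homomorphism from a finite group to ${\mathbb Z}$, and such a homomorphism must be zero.
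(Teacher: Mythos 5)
Your proof is correct, but it takes a genuinely different route from the paper's. The paper's argument is a two-line application of a rigidity fact it established just beforehand: homomorphisms of numerical semigroups are exactly the scale maps $a\mapsto ka$, and the only surjective one is the identity. Given $\Lambda,\Lambda'$ with the same $\oplus$, the enumeration-matching map $f(\lambda_i)=\lambda'_i$ is a surjective homomorphism, since $f(\lambda_i+\lambda_j)=\lambda'_{i\oplus j}=\lambda'_i+\lambda'_j$; hence $f=\mathrm{id}$ and $\Lambda=\Lambda'$. You instead reconstruct the semigroup bottom-up from the $\oplus$ table: the multiplicity $m$ as $\max_i\bigl((i\oplus 1)-i\bigr)$, the Ap\'ery indices as the points outside the image of $\sigma(i)=i\oplus 1$, the orbit/position data $k(i)$, and finally the Ap\'ery values, pinned down because the discrepancies $D_r=W_r-W_r'$ form a homomorphism from the finite orbit group (isomorphic to ${\mathbb Z}/m{\mathbb Z}$) into the torsion-free group ${\mathbb Z}$, hence vanish. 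Both proofs ultimately rest on a homomorphism-rigidity principle, but they are deployed at different levels: the paper uses rigidity of maps between numerical semigroups themselves, while you use rigidity of maps from a finite group to ${\mathbb Z}$. What the paper's approach buys is brevity, at the cost of invoking the classification of semigroup homomorphisms; what yours buys is self-containedness and an explicit description of where each piece of $\Lambda$ sits inside the $\oplus$ data, which also makes transparent why the reconstruction needs infinitely many entries of the table (your formula for $m$ is a supremum over all $i$), in agreement with Lemma~\ref{lemma:oplus2}. One small remark: in your last step the full group structure is not needed; since the orbit set is finite, iterating $u$ on a fixed $r$ and applying pigeonhole already forces $D_r=0$.
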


\begin{proof}
Suppose that two semigroups $\Lambda=\{\lambda_0<\lambda_1<\dots\}$ and
$\Lambda'=\{\lambda'_0<\lambda'_1<\dots\}$ have the same associated operation $\oplus$.
Define the map $f(\lambda_i)=\lambda'_i$. It is obviously surjective and it is a homomorphism since
$f(\lambda_i+\lambda_j)=f(\lambda_{i\oplus j})=\lambda'_{i\oplus j}=\lambda'_i+\lambda'_j=f(\lambda_i)+f(\lambda_j)$. So, $\Lambda=\Lambda'$.
\end{proof}

Conversely to Lemma~\ref{lemma:characterization oplus}
we next prove that any finite set of $\oplus$ values
is shared by an infinite number of semigroups.
This was proved in \cite{Bras:2007_IEEE_ANote}.

\begin{lemma}
\label{lemma:oplus2}
Let $a,b$ be positive integers.
Let $\Lambda$ be a numerical semigroup with enumeration $\lambda$
and let $d$ be an integer with $d\geq 2$. Define the numerical semigroup
$\Lambda'=d\Lambda\cup\{i\in{\mathbb N}: i\geq
d\lambda_{a\oplus b}\}$.
Then
$i\oplus_{\Lambda'}
j=i\oplus_\Lambda j$
for all $i\leq a$ and all $j\leq b$, and $\Lambda'\neq\Lambda$.
\end{lemma}

\begin{proof}
It is obious that $\Lambda'\neq\Lambda$.
Let $\lambda'$ be the enumeration of $\Lambda'$.
For all $k\leq a\oplus_{\Lambda} b$, $\lambda'_k=d\lambda_k$.
In particular, if $i\leq a$ and $j\leq b$ then
$\lambda'_i=d\lambda_i$ and $\lambda'_j=d\lambda_j$.
Hence,
$\lambda'_{i\oplus_{\Lambda'} j}=\lambda'_i+\lambda'_j=d\lambda_i+d\lambda_j=d\lambda_{i\oplus_\Lambda
  j}=\lambda'_{i\oplus_\Lambda j}$. This implies
$i\oplus_{\Lambda'} j=i\oplus_\Lambda j$.
\end{proof}

By varying $d$ in Lemma~\ref{lemma:oplus2},
we can see that
although the values $(i\oplus j)_{0\leq i,j}$
of a numerical semigroup uniquely determine it,
any subset $(i\oplus j)_{0\leq i\leq a, 0\leq j\leq b}$
is exactly the corresponding subset of infinitely many
numerical semigroups.

\subsubsection{Characterization of a numerical semigroup by $\nu$}
\label{section: nu determina lambda}

We will use the following result on the values $\nu_i$.
It can be found in \cite[Theorem 3.8.]{KiPe}.

\begin{lemma}
\label{lemma:nu}
Let $\Lambda$ be a numerical semigroup with genus $g$, conductor $c$ and enumeration $\lambda$.
Let $g(i)$
be the number of gaps
smaller than $\lambda_i$
and let
$$D(i)=\{l\in{\mathbb N}_0\setminus\Lambda: \lambda_i-l\in{\mathbb N}_0\setminus\Lambda\}.$$
Then, for all $i\in{\mathbb N}_0$,
$$\nu_i=i-g(i)+\#D(i)+1.$$
In particular,
for all $i\geq 2c-g-1$
(or equivalently, for all $i$ such that
$\lambda_i\geq 2c-1$),
$\nu_i=i-g+1.$
\end{lemma}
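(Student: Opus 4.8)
The plan is to prove the formula
$$\nu_i=i-g(i)+\#D(i)+1$$
by carefully analyzing the set $N_i=\{j\in{\mathbb N}_0:\lambda_i-\lambda_j\in\Lambda\}$ whose cardinality is $\nu_i$. The key observation is that $j\in N_i$ precisely when $\lambda_j$ is a non-gap, $\lambda_j\leq\lambda_i$, and the complementary value $\lambda_i-\lambda_j$ is also a non-gap. So I would first count the non-gaps $\lambda_j$ in the interval $[0,\lambda_i]$: there are exactly $i+1$ of them, namely $\lambda_0,\lambda_1,\dots,\lambda_i$. For each such $\lambda_j$ I must decide whether $\lambda_i-\lambda_j$ lands in $\Lambda$ or in the gap set.

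The main idea is an inclusion–exclusion / counting argument on the symmetric pairing $\lambda_j\mapsto\lambda_i-\lambda_j$. First I would partition the integers $m$ in $[0,\lambda_i]$ according to whether $m$ and $\lambda_i-m$ are gaps or non-gaps, using that among the $\lambda_i+1$ integers in $[0,\lambda_i]$ exactly $g(i)$ are gaps (those smaller than $\lambda_i$) plus possibly $\lambda_i$ itself, which is a non-gap. The crucial quantity is the number of non-gaps $m\leq\lambda_i$ for which $\lambda_i-m$ is \emph{also} a non-gap; this is exactly $\nu_i$. I would count this by complementary counting: a non-gap $m$ fails to be in the desired set exactly when $\lambda_i-m$ is a gap. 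The set $D(i)$ collects the gaps $l$ with $\lambda_i-l$ also a gap, so $\#D(i)$ measures the overlap of the gap structure with its reflection through $\lambda_i/2$, and the inclusion–exclusion bookkeeping converts the raw count $i+1$ of non-gaps into the corrected count by adding back $\#D(i)$ and subtracting the $g(i)$ gaps that get double-counted. I expect the delicate step to be handling the endpoints $m=0$ and $m=\lambda_i$ correctly (both are non-gaps, and they pair with each other), and confirming that the algebra yields exactly $i-g(i)+\#D(i)+1$ rather than being off by one.

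For the ``in particular'' claim, the plan is to show $\#D(i)=0$ whenever $\lambda_i\geq 2c-1$. If $l$ and $\lambda_i-l$ are both gaps, then both are strictly less than $c$, forcing $\lambda_i=l+(\lambda_i-l)<2c$, i.e. $\lambda_i\leq 2c-2$; equivalently $\lambda_i<2c-1$. Hence $\lambda_i\geq 2c-1$ makes $D(i)$ empty. I would then use the earlier lemma stating $\lambda_i=g+i$ once $\lambda_i\geq c$ (so $g(i)=g$ in that range) to get $\nu_i=i-g+\#D(i)+1=i-g+1$. I would also translate the index condition: since $\lambda_i\geq 2c-1$ is equivalent to $i\geq 2c-g-1$ via $\lambda_i=i+g$ in the stable range, both formulations of the threshold agree.

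The main obstacle I anticipate is the off-by-one and endpoint accounting in the general formula: one must be scrupulous about whether $\lambda_i$ itself, and the value $0$, are being counted, and whether the pairing $m\leftrightarrow\lambda_i-m$ is genuinely an involution that I can use to halve-and-double cleanly. The combinatorial identity is really a statement that
$$\nu_i=\#\{m\in[0,\lambda_i]:m\in\Lambda\text{ and }\lambda_i-m\in\Lambda\},$$
and I would verify the formula by writing the total $\lambda_i+1$ of all integers in $[0,\lambda_i]$ as a sum over the four pairing-types (non-gap/non-gap, non-gap/gap, gap/non-gap, gap/gap) and solving for the non-gap/non-gap count in terms of $g(i)$ and $\#D(i)$. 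Once that bookkeeping closes, both assertions follow directly.
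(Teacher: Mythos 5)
Your proposal is correct and is essentially the paper's own argument: the paper likewise double-counts the integers in $[0,\lambda_i]$ — one identity for the $g(i)$ gaps (splitting off $\#D(i)$) and one for the $i+1$ non-gaps (splitting off $\nu_i$), linked by the involution $l\mapsto\lambda_i-l$ that puts the two mixed types (non-gap/gap and gap/non-gap) in bijection — and subtracts to obtain $\nu_i=i-g(i)+\#D(i)+1$. Your handling of the range $\lambda_i\geq 2c-1$ (showing $D(i)=\emptyset$ because two gaps each lie below $c$, and $g(i)=g$) fills in exactly what the paper leaves implicit.
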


\begin{proof}
The number of gaps smaller than $\lambda_i$ is $g(i)$ but it is also 
$\#D(i)+\#\{l\in{\mathbb N}_0\setminus\Lambda: \lambda_i-l\in\Lambda\}$,
so,
\begin{equation}
\label{oneside}
g(i)=\#D(i)+\#\{l\in{\mathbb N}_0\setminus\Lambda: \lambda_i-l\in\Lambda\}.
\end{equation}
On the other hand,
the number of non-gaps which are at most $\lambda_i$ is $i+1$ but it is also
$\nu_i+\#\{l\in\Lambda: \lambda_i-l\in{\mathbb N}_0\setminus\Lambda\}=\nu_i+\#\{l\in{\mathbb N}_0\setminus\Lambda: \lambda_i-l\in\Lambda\}$. So,
\begin{equation}
\label{otherside}
i+1=\nu_i+\#\{l\in{\mathbb N}_0\setminus\Lambda: \lambda_i-l\in\Lambda\}.
\end{equation}
From equalities
\ref{oneside} and \ref{otherside}
we get 
$$g(i)-\#D(i)=i+1-\nu_i$$
which leads to the desired result.
\end{proof}

The next lemma shows that if a numerical semigroup is non-trivial 
then there exists at least one value $k$ such that $\nu_k=\nu_{k+1}$.

\begin{lemma}
\label{lemma:repeatednu}
Suppose $\Lambda\neq{\mathbb N}_0$
and suppose that $c$ and $g$ are the conductor and the genus of $\Lambda$.
Let $k=2c-g-2$. Then $\nu_{k}=\nu_{k+1}$.
\end{lemma}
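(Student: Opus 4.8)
The plan is to evaluate both $\nu_k$ and $\nu_{k+1}$ explicitly through the formula of Lemma~\ref{lemma:nu}, namely $\nu_i = i - g(i) + \#D(i) + 1$, and to show that each equals $2(c-g)$.

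First I would pin down the two relevant non-gaps. Since $\Lambda \neq \mathbb{N}_0$ forces $c \geq 2$, one checks $k = 2c-g-2 \geq c-g$, so by the earlier equivalence ($\lambda_i = g+i$ precisely when $\lambda_i \geq c$, equivalently when $i \geq c-g$) we obtain $\lambda_k = g+k = 2c-2$ and $\lambda_{k+1} = g+(k+1) = 2c-1$. In particular both indices are at least $c-g$, so every gap is smaller than $\lambda_k$ and than $\lambda_{k+1}$; that is, $g(k) = g(k+1) = g$. Substituting into Lemma~\ref{lemma:nu} reduces the problem to $\nu_k = (2c-2g-1) + \#D(k)$ and $\nu_{k+1} = (2c-2g) + \#D(k+1)$.

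The heart of the argument is evaluating the two ``gap-pair'' counts $\#D(i) = \#\{l\notin\Lambda : \lambda_i - l \notin \Lambda\}$. The elementary fact I would exploit is that every gap lies in $\{1,\dots,c-1\}$, so a sum of two gaps is at most $2c-2$, with equality exactly when both summands equal $c-1$. For $\lambda_{k+1} = 2c-1$ this gives $D(k+1) = \emptyset$ immediately: if $l \leq c-1$ is a gap then $\lambda_{k+1}-l \geq c$ is a non-gap, so no $l$ qualifies and $\nu_{k+1} = 2c-2g$. For $\lambda_k = 2c-2$, a gap $l \leq c-1$ forces $\lambda_k - l = 2c-2-l \geq c-1$, so $\lambda_k - l$ can be a gap only if it equals $c-1$, which occurs exactly when $l = c-1$; and $c-1$ is indeed a gap by the definition of the conductor. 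Hence $D(k) = \{c-1\}$, so $\#D(k) = 1$ and $\nu_k = 2c-2g-1+1 = 2c-2g$.

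Comparing the two values yields $\nu_k = \nu_{k+1} = 2(c-g)$, which is the claim. The only genuine obstacle is the careful bookkeeping in these two $D$-counts; everything else is direct substitution into Lemma~\ref{lemma:nu} together with the identity $\lambda_i = g+i$ for $i \geq c-g$. As a sanity check, $\nu_{k+1}$ also follows from the ``in particular'' clause of Lemma~\ref{lemma:nu}, since $k+1 = 2c-g-1$, and that clause returns $(k+1)-g+1 = 2c-2g$, matching the computation above.
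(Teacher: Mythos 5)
Your proof is correct and follows essentially the same route as the paper: both apply Lemma~\ref{lemma:nu} at $i=k$ and $i=k+1$, note that $g(k)=g(k+1)=g$, and compute $D(k)=\{c-1\}$ and $D(k+1)=\emptyset$. The only difference is that you spell out the bookkeeping behind those two $D$-counts, which the paper asserts without detail.
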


\begin{proof}
Since $\Lambda\neq{\mathbb N}_0$,
$c\geq 2$ and so
$2c-2\geq c$.
This implies $k=\lambda^{-1}(2c-2)$ and $g(k)=g$.
By Lemma~\ref{lemma:nu},
$\nu_{k}=k-g+\#D(k)+1$.
But $D(k)=\{c-1\}$.
So, $\nu_k=k-g+2$.
On the other hand, 
$g(k+1)=g$ and
$D(k+1)=\emptyset$.
By Lemma~\ref{lemma:nu} again,
$\nu_{k+1}=k-g+2=\nu_k$. 
\end{proof}

\begin{lemma}
\label{lemma:nutrivialunic}
The trivial semigroup is the unique numerical semigroup with $\nu$ sequence
equal to $1,2,3,4,5,\dots$.
\end{lemma}

\begin{proof}
As a consequence of Lemma~\ref{lemma:repeatednu} 
for any other numerical semigroup there is
a value in the $\nu$ sequence that appears at least three times.
\end{proof}

The next result was proved in \cite{Bras:2004_IEEE_Acute,Bras:2005_AGCT}.


\begin{theorem}\label{theorem: nu determina lambda}
The $\nu$ sequence of a numerical semigroup determines it.
\end{theorem}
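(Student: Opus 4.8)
The plan is to extract from the $\nu$ sequence, in turn, the genus $g$, the conductor $c$, and finally the complete set of gaps, which determines $\Lambda$.

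First, if the sequence is $1,2,3,\dots$ then Lemma~\ref{lemma:nutrivialunic} already gives $\Lambda={\mathbb N}_0$, so I assume $\Lambda$ is non-trivial. By Lemma~\ref{lemma:nu}, $\nu_i=i+1-g$ for all sufficiently large $i$, so $g$ is recovered as the eventual value of $i+1-\nu_i$. To pin down $c$, I use that $\nu_i=i+1-g$ holds once $\lambda_i\ge 2c-1$, i.e.\ for $i\ge 2c-g-1$, whereas Lemma~\ref{lemma:repeatednu} shows that at $k=2c-g-2$ one has $\nu_k=k+2-g\neq k+1-g$. Hence $k=2c-g-2$ is the largest index at which $\nu_i\neq i+1-g$, and since $g$ is known this determines $c=(k+g+2)/2$.

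Next I reconstruct the gap set $G={\mathbb N}_0\setminus\Lambda\subseteq\{1,\dots,c-1\}$. Writing $r(N)=\#\{(l,l')\in G\times G: l+l'=N\}$ for the number of ordered representations of $N$ as a sum of two gaps, I note that for every integer $N\ge c$ the corresponding index $i$ satisfies $\lambda_i=N=g+i$ (by the equivalence $\lambda_i\ge c\Leftrightarrow\lambda_i=g+i$ proved earlier), so $g(i)=g$ and Lemma~\ref{lemma:nu} gives $\#D(i)=\nu_i-i+g-1$; that is, $r(N)=\nu_{N-g}+2g-1-N$ is known for all $N\ge c$. Then I recover the membership indicator of $G$ by downward induction. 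Since $c-1\in G$ and every gap is at most $c-1$, the unknowns $[j\in G]$ for $j=c-1,c-2,\dots,1$ can be determined one at a time: setting $N=2c-2-m$ (which is $\ge c$ precisely for $0\le m\le c-2$, so $r(N)$ is known) and expanding
$$r(2c-2-m)=\sum_{\substack{s+t=m\\ s,t\ge 0}}[c-1-s\in G]\,[c-1-t\in G],$$
the only term involving the still-unknown indicator $[c-1-m\in G]$ comes from $(s,t)=(0,m)$ and $(m,0)$, with total coefficient $2$ because $c-1\in G$; all remaining terms involve indices $c-1-s$ with $s<m$, already determined. Solving for $[c-1-m\in G]$ at each step recovers all of $G$, hence $\Lambda$.

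The main obstacle is this last reconstruction: the $\nu$ sequence only reveals the representation counts $r(N)$ for $N\ge c$ --- the ``upper half'' of the autocorrelation of $G$ --- since for $\lambda_i<c$ the quantity $g(i)$ is not yet known and cannot be subtracted off in Lemma~\ref{lemma:nu}. The key point that makes the argument work is that the top element $c-1$ of $G$ is known and forces the coefficient $2$ above, so the upper half alone suffices to peel off the gaps from largest to smallest. Alternatively, once $G$ is found one may pass through the $\oplus$ table and invoke Lemma~\ref{lemma:characterization oplus}, but reconstructing $G$ already exhibits $\Lambda$ directly.
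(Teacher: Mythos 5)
Your proposal is correct and takes essentially the same route as the paper's own proof: handle the trivial semigroup via Lemma~\ref{lemma:nutrivialunic}, recover $g$ and then $c$ from the tail behavior $\nu_i=i+1-g$ and the last exceptional index $k=2c-g-2$ (Lemmas~\ref{lemma:nu} and~\ref{lemma:repeatednu}), and finally reconstruct the gaps by downward induction from $c-1$ using the representation counts $\#D(i)$, which are computable from $\nu$ exactly when $\lambda_i\geq c$. Your coefficient-$2$ identity for $r(2c-2-m)$, which exploits that $c-1$ is a gap, is precisely the paper's case distinction $D(c-1+i-g)=\tilde{D}(i)\cup\{c-1,i\}$ (when $i$ is a gap) versus $\tilde{D}(i)$, under the substitution $i=c-1-m$.
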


\begin{proof}
If $\Lambda={\mathbb N}_0$ then, by 
Lemma~\ref{lemma:nutrivialunic},
its $\nu$ sequence is unique.

Suppose that $\Lambda\neq{\mathbb N}_0$.
Then we can determine the genus and the conductor
from the $\nu$ sequence.
Indeed, let $k=2c-g-2$. 
In the following we will show how to determine
$k$ without the knowledge of $c$ and $g$.
By Lemma~\ref{lemma:repeatednu}
it holds that $\nu_k=\nu_{k+1}=k-g+2$ and by
Lemma~\ref{lemma:nu} 
$\nu_i=i-g+1$ for all $i>k$, which means
that $\nu_{i+1}=\nu_{i}+1$ for all $i>k$.
So,
$$k=\max\{i: \nu_i=\nu_{i+1}\}.$$
We can now determine the genus as
$$g=k+2-\nu_{k}$$
and the conductor as
$$c=\frac{k+g+2}{2}.$$
At this point we know that $\{0\}\in\Lambda$
and $\{i\in{\mathbb N}_0: i\geq c\}\subseteq\Lambda$
and, furthermore,
$\{1,c-1\}\subseteq{\mathbb N}_0\setminus\Lambda$.
It remains to determine 
for all $i\in\{2,\dots,c-2\}$
whether
$i\in\Lambda$. 
Let us assume
$i\in\{2,\dots,c-2\}$.

On one hand, $c-1+i-g>c-g$ and so
$\lambda_{c-1+i-g}>c$.
This means that
$g(c-1+i-g)=g$ and hence
\begin{equation}
\label{equacio: demo nu determina lambda 1}
\nu_{c-1+i-g}=c-1+i-g-g+\#D(c-1+i-g)+1.
\end{equation}

On the other hand, if we define
$\tilde{D}(i)$ to be
$$\tilde{D}(i)=\{l\in{\mathbb N}_0\setminus\Lambda: c-1+i-l\in{\mathbb N}_0\setminus\Lambda,i<l<c-1\}$$
then
\begin{equation}
\label{equacio: demo nu determina lambda 2}
D(c-1+i-g)=\casos{\tilde{D}(i)\cup\{c-1,i\}}{if $i\in{\mathbb N}_0\setminus\Lambda$}{\tilde{D}(i)}{otherwise.}
\end{equation}
So, from (\ref{equacio: demo nu determina lambda 1}) and
(\ref{equacio: demo nu determina lambda 2}),
\begin{center}
$i$ is a non-gap $\Longleftrightarrow
\nu_{c-1+i-g}=c+i-2g+\#\tilde{D}(i).$
\end{center}
This gives an inductive procedure to decide
whether $i$ belongs to $\Lambda$
decreasingly from $i=c-2$ to $i=2$.
\end{proof}

\begin{remark}
From the proof of
Theorem~\ref{theorem: nu determina lambda}
we see that a semigroup can be determined by
$k=\max\{i: \nu_i=\nu_{i+1}\}$ and
the values $\nu_i$ for $i\in\{c-g+1,\dots,2c-g-3\}$.
\end{remark}

\begin{remark}
Lemma~\ref{lemma:characterization oplus} is a consequence of Lemma~\ref{lemma:nu_from_oplus} and
Theorem~\ref{theorem: nu determina lambda}.
\end{remark}

Conversely to Theorem~\ref{theorem: nu determina lambda}
we next prove that any finite set of $\nu$ values
is shared by an infinite number of semigroups.
This was proved in  \cite{Bras:2007_IEEE_ANote}.
Thus, the construction just given 
to determine a numerical semigroup from
its $\nu$ sequence can only be performed
if we know the behavior of the infinitely many values in the
$\nu$ sequence.

\begin{lemma}
\label{lemma:nu2}
Let $k$ be a positive integer.
Let $\Lambda$ be a numerical semigroup with enumeration $\lambda$
and let $d$ be an integer with $d\geq 2$.
Define the semigroup $\Lambda'=d\Lambda\cup\{i\in{\mathbb N}: i\geq
d\lambda_k\}$
and let $\nu^{\Lambda}$ and $\nu^{\Lambda'}$ be the $\nu$ sequence corresponding to
$\Lambda$ and $\Lambda'$ respectively.
Then $\nu_i^{\Lambda'}=\nu_i^{\Lambda}$
for all $i\leq k$ and
$\Lambda'\neq \Lambda$.
\end{lemma}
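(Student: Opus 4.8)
The plan is to reduce everything to the $\oplus$ operation and invoke Lemma~\ref{lemma:nu_from_oplus}, which expresses $\nu_i$ purely as the number of pairs $(j,l)$ with $j\oplus l=i$. First I would pin down the enumeration $\lambda'$ of $\Lambda'$. Since $d\geq 2$ and $k\geq 1$, the elements of $d\Lambda$ lying below $d\lambda_k$ are exactly $d\lambda_0<d\lambda_1<\cdots<d\lambda_{k-1}$, and everything from $d\lambda_k$ on belongs to $\Lambda'$; hence $\lambda'_m=d\lambda_m$ for every $m\leq k$ (and $\lambda'_m=d\lambda_k+(m-k)$ for $m>k$). This is the only structural fact about $\Lambda'$ I will need.

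Next I would prove the key claim: for every $m\leq k$ the set of $\oplus$-decompositions of $m$ is the same in both semigroups, that is, $\{(j,l):j\oplus_{\Lambda'}l=m\}=\{(j,l):j\oplus_{\Lambda}l=m\}$. For the forward inclusion, if $j\oplus_\Lambda l=m\leq k$ then $j,l\pleq_\Lambda m$, so $j,l\leq m\leq k$ and the enumerations agree on all three indices; therefore $\lambda'_j+\lambda'_l=d(\lambda_j+\lambda_l)=d\lambda_m=\lambda'_m$, giving $j\oplus_{\Lambda'}l=m$. The reverse inclusion runs symmetrically: from $j\oplus_{\Lambda'}l=m\leq k$ the compatibility of $\oplus$ with the natural order forces $j,l\leq m\leq k$, so $\lambda'_j=d\lambda_j$ and $\lambda'_l=d\lambda_l$, and $d\lambda_j+d\lambda_l=d\lambda_m$ yields $\lambda_j+\lambda_l=\lambda_m\in\Lambda$. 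Counting these pairs and applying Lemma~\ref{lemma:nu_from_oplus} gives $\nu^{\Lambda'}_m=\nu^{\Lambda}_m$ for all $m\leq k$. Finally $\Lambda'\neq\Lambda$ is immediate, since $\lambda'_1=d\lambda_1>\lambda_1$.

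The step I expect to be the only real subtlety is the containment of the summands: one must be sure that whenever a sum lands at an index $m\leq k$, both summands also have index at most $k$, so that the two enumerations genuinely coincide on $j$, $l$ and $m$ simultaneously. This is exactly what the order-compatibility of $\oplus$ (noted just after its definition) guarantees, through $j\pleq_\Lambda j\oplus l=m$. The same mechanism drives Lemma~\ref{lemma:oplus2}, so the present lemma may be viewed as its translation into $\nu$-language via Lemma~\ref{lemma:nu_from_oplus}.
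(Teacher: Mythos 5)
Your proof is correct. It rests on the same structural fact as the paper's own argument --- that the enumeration of $\Lambda'$ satisfies $\lambda'_m=d\lambda_m$ for all $m\leq k$ --- but the bookkeeping is genuinely different: the paper works straight from the definition $\nu_i=\#N_i$, showing that for $j\leq i\leq k$ one has $\lambda'_i-\lambda'_j=d(\lambda_i-\lambda_j)\in\Lambda'$ if and only if $\lambda_i-\lambda_j\in\Lambda$, whereas you count ordered $\oplus$-decomposition pairs and transfer the count through Lemma~\ref{lemma:nu_from_oplus}. Since the set $N_i$ is in canonical bijection with the set of pairs $(j,l)$ with $j\oplus l=i$, the two proofs are close cousins; still, your formulation buys a little rigor for free. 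In your reverse inclusion both summand indices $j,l$ are forced below $m\leq k$ by monotonicity, so every element in sight lies in the scaled part $d\Lambda$ of $\Lambda'$ and no case analysis is needed. The paper's biconditional, by contrast, tacitly requires ruling out that $d(\lambda_i-\lambda_j)$ falls in the tail $\{n\in{\mathbb N}:n\geq d\lambda_k\}$ without lying in $d\Lambda$; this does hold --- $\lambda_i-\lambda_j\leq\lambda_i\leq\lambda_k$ forces $\lambda_i-\lambda_j=\lambda_k\in\Lambda$ in that case --- but it is left unsaid. Your closing observation is also accurate: your argument is essentially the proof of Lemma~\ref{lemma:oplus2} adapted to the cutoff $d\lambda_k$, followed by the $\oplus$-to-$\nu$ translation provided by Lemma~\ref{lemma:nu_from_oplus}, while the paper proves Lemma~\ref{lemma:oplus2} and Lemma~\ref{lemma:nu2} by parallel but independent computations.
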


\begin{proof}
It is obvious that $\Lambda'\neq \Lambda$.
Let $\lambda'$ be the enumeration of $\Lambda'$.
For all $i\leq k$, $\lambda'_i=d\lambda_i$.
In particular, if $j\leq i\leq k$, then
$\lambda'_i-\lambda'_j=d(\lambda_i-\lambda_j)\in\Lambda'\Longleftrightarrow
\lambda_i-\lambda_j\in\Lambda$.
Hence, $\nu_i^{\Lambda'}=\nu_i^{\Lambda}$.
\end{proof}

As a consequence of Lemma~\ref{lemma:nu2},
although the sequence $\nu$ of a numerical semigroup
uniquely determines it, any subset $(\nu_i)_{0\leq i\leq k-1}$ is
exactly the set of the first $k$ values of the $\nu$ sequence of
infinitely many semigroups. In fact, by varying $d$ among
the positive integers, we get an infinite set of semigroups, all of
them sharing the first $k$ values in the $\nu$ sequence.

It would be interesting to find which sequences of
positive integers correspond to the sequence $\nu$ of a
numerical semigroup. By now, only
some necessary conditions can be stated, for instance,
\begin{itemize}
\item
$\nu_0=1$,
\item
$\nu_1=2$,
\item
$\nu_i\leq  i+1$ for all $i\in{\mathbb N}_0$,
\item
there exists $k$ such that $\nu_{i+1}=\nu_i+1$ for all $i\geq k$,
\end{itemize}

\subsubsection{Characterization of a numerical semigroup
by $\tau$}
\label{section:characterization}

In this section we show that a numerical semigroup
is determined by its $\tau$ sequence.
This was proved in \cite{Bras:2009_DCC}.

\begin{lemma}
\label{lemma:precharacterization}
Let $\Lambda$ be a numerical semigroup with
enumeration $\lambda$,
conductor $c>2$, genus $g$, and dominant $d$.
Then
\begin{enumerate}
\item $\tau_{(2c-g-2)+2i}=\tau_{(2c-g-2)+2i+1}=(c-g-1)+i$ for all $i\geq 0$
\item At least one of the following statements holds
\begin{itemize}
\item
$\tau_{(2c-g-2)-1}=c-g-1$
\item
$\tau_{(2c-g-2)-2}=c-g-1$
\end{itemize}
\end{enumerate}
\end{lemma}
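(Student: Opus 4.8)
The plan is to reduce everything to the characterization of $\tau$ recorded in the Remark following its definition: $\tau_i$ is the largest index $j$ with $\lambda_j\leq\lambda_i/2$ and $\lambda_i-\lambda_j\in\Lambda$, and moreover $\tau_i=\lambda^{-1}(\lambda_i/2)$ whenever $\lambda_i/2\in\Lambda$. Throughout I write $m=2c-g-2$ and use the earlier lemma stating $\lambda_n=g+n$ exactly when $n\geq c-g$ (equivalently $\lambda_n\geq c$). Since $c>2$, all indices $m+2i$, $m+2i+1$, $m-1$ (and $m-2$ when $c\geq 4$) lie at or above $c-g$, so their $\lambda$-values are $2c-2+2i$, $2c-1+2i$, $2c-3$ and $2c-4$ respectively.

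For statement (1) I would split into the even and odd indices. For the even index $m+2i$ we have $\lambda_{m+2i}/2=c-1+i$. When $i\geq 1$ this is $\geq c$, hence a non-gap, and the Remark gives $\tau_{m+2i}=\lambda^{-1}(c-1+i)=(c-g-1)+i$ directly. When $i=0$ the value $c-1$ is a gap, so I argue by hand: the largest non-gap $\leq c-1$ is the dominant $d=\lambda_{c-g-1}$ (which satisfies $d\leq c-2$ because $c-1\not\in\Lambda$), and $2c-2-d\geq c$ lies in $\Lambda$, so $\tau_m=\lambda^{-1}(d)=c-g-1$. For the odd index $m+2i+1$ half of $\lambda=2c-1+2i$ is not an integer and its floor is $c-1+i$; for $i\geq 1$ this is a non-gap with $(2c-1+2i)-(c-1+i)=c+i\in\Lambda$, and for $i=0$ the relevant non-gap is again $d$ with $(2c-1)-d\geq c+1\in\Lambda$. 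In all cases the largest admissible $j$ is $(c-g-1)+i$, proving the common value.

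For statement (2) the clean dichotomy is on whether $c-2\in\Lambda$. If $c-2\in\Lambda$ then $c\geq 4$ (since $c-2=1$ would force $\Lambda={\mathbb N}_0$ and $c-2=0$ is excluded by $c>2$), so $m-2\geq c-g$ and $\lambda_{m-2}/2=c-2\in\Lambda$; the Remark then yields $\tau_{m-2}=\lambda^{-1}(c-2)=c-g-1$. If $c-2\not\in\Lambda$ then $d\leq c-3$, and for $\tau_{m-1}$ (with $\lambda_{m-1}=2c-3$) the largest non-gap $\leq c-2$ is $d$, while $2c-3-d\geq c\in\Lambda$; hence $\tau_{m-1}=\lambda^{-1}(d)=c-g-1$. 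Either way one of the two displayed equalities holds.

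I expect the main obstacle to be the membership condition $\lambda_i-\lambda_j\in\Lambda$, which is exactly what breaks the naive guess for $\tau_{m-1}$: taking $j=\lambda^{-1}(d)$ gives $\lambda_{m-1}-d=2c-3-d$, and when $d=c-2$ this equals $c-1\not\in\Lambda$, so the candidate fails precisely in that case. Recognizing that this forced failure is what necessitates the disjunctive form of (2) --- and that passing to index $m-2$ repairs it via $\lambda_{m-2}/2=c-2=d\in\Lambda$ --- is the conceptual heart; the rest is bookkeeping with the identity $\lambda_n=g+n$ and checking the two side conditions at each index.
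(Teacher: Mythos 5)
Your proof is correct. Part (1) is essentially the paper's own argument: for $i\geq 1$ the half-value $c-1+i$ is a non-gap and the characterization of $\tau$ applies directly, while for $i=0$ one falls back on the dominant $d$, using $d\leq c-2$ to get $\lambda_m-d\geq c$ (writing $m=2c-g-2$ as you do). Where you genuinely diverge is the case decomposition in part (2). The paper splits into four cases --- $c=3$, $d=c-2$, $d=c-3$, $d\leq c-4$ --- evaluating $\tau_{m-2}$ in all of them except $d=c-3$, where it must switch to $\tau_{m-1}$. Your dichotomy on whether $c-2\in\Lambda$ is cleaner: if $c-2\in\Lambda$ (equivalently $d=c-2$, which forces $c\geq 4$) then $\lambda_{m-2}/2=c-2\in\Lambda$ and the remark finishes; if $c-2\notin\Lambda$ (equivalently $d\leq c-3$, which absorbs the paper's cases $c=3$, $d=c-3$ and $d\leq c-4$) then $\tau_{m-1}$ always works, since the largest non-gap at most $c-2$ is $d$ and $2c-3-d\geq c$. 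This buys two things: the case $c=3$ needs no special treatment (the paper must isolate it because there $m-2<c-g$, so $\lambda_{m-2}\neq 2c-4$), and your closing observation that $d=c-2$ is exactly when the $\tau_{m-1}$ computation breaks (since then $2c-3-d=c-1\notin\Lambda$) explains \emph{why} the lemma's conclusion has to be stated as a disjunction --- something the paper's proof verifies case by case but never makes explicit.
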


\begin{proof}
\begin{enumerate}
\item
If $i\geq 1$ then $\lambda_{(2c-g-2)+2i}=2c-2+2i$ and
$\lambda_{(2c-g-2)+2i}/2=c-1+i\in\Lambda$. So
$\tau_{(2c-g-2)+2i}=\lambda^{-1}(c-1+i)=c-1+i-g$.
On the other hand, $\lambda_{(2c-g-2)+2i+1}=2c-2+2i+1=(c-1+i)+(c-1+i+1)$
and so $\tau_{(2c-g-2)+2i+1}=\lambda^{-1}(c-1+i)=c-1+i-g$.

If $i=0$ then $\lambda_{(2c-g-2)+2i}=\lambda_{2c-g-2}$ and since
$c>2$
this is equal to $2c-2$.
Now $\lambda_{2c-g-2}/2=c-1$ and the largest non-gap which is at most
$c-1$ is $d$. On the other hand, $\lambda_{2c-g-2}-d=2c-2-d\geq c$
because $c\geq d+2$. Consequently $\lambda_{2c-g-2}-d\in\Lambda$ and
$\tau_{2c-g-2}=\lambda^{-1}(d)=c-g-1$.
Similarly,
the largest non-gap which is at most
$\lambda_{2c-g-1}/2$ is $d$ and
$\lambda_{2c-g-1}-d=2c-1-d\in\Lambda$.
So, $\tau_{2c-g-1}=c-g-1$.

\item
If $c=3$ then $g=2$ and $\lambda_{(2c-g-2)-2}=\lambda_0$ and $\tau_{(2c-g-2)-2}=0=c-g-1$.
Assume $c\geq 4$.
If $d=c-2$ then
$\lambda_{(2c-g-2)-2}=2c-4=2d$, so $\tau_{(2c-g-2)-2}=\lambda^{-1}(d)=c-g-1$.
If $d=c-3$ then
$\lambda_{(2c-g-2)-1}=2c-3=d+c$, so $\tau_{(2c-g-2)-1}=\lambda^{-1}(d)=c-g-1$.
Suppose now $d\leq c-4$. In this case $\lambda_{(2c-g-2)-2}/2=c-2$, which is between $d$ and $c$,
and $\lambda_{(2c-g-2)-2}-d=2c-4-d\geq c$. So $\lambda_{(2c-g-2)-2}-d\in\Lambda$.
This makes $\tau_{(2c-g-2)-2}=\lambda^{-1}(d)=c-g-1$.
\end{enumerate}
\end{proof}

\begin{lemma}
\label{lemma:trivialcase}
The trivial semigroup is the unique numerical semigroup with $\tau$ sequence
equal to $0,0,1,1,2,2,3,3,4,4,5,5,\dots$.
\end{lemma}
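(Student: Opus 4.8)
The plan is to extract the whole structure of $\Lambda$ from the single observation, recorded right after the definition of the $\tau$ sequence, that $\tau_i=0$ if and only if $\lambda_i$ is either $0$ or a generator of $\Lambda$. First I would dispatch the forward direction: for $\Lambda={\mathbb N}_0$ we have $\lambda_i=i$ and $\oplus$ is ordinary addition, so $\tau_i=\max\{j:\exists k,\ j\le k\le i,\ j+k=i\}=\lfloor i/2\rfloor$, which is exactly the sequence $0,0,1,1,2,2,\dots$. This confirms that the trivial semigroup is one solution; the content is uniqueness.

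For uniqueness I would argue as follows. Suppose $\Lambda$ is a numerical semigroup whose $\tau$ sequence equals $0,0,1,1,2,2,\dots$, so that $\tau_0=\tau_1=0$ while $\tau_i=\lfloor i/2\rfloor\ge 1$ for every $i\ge 2$. Reading the quoted equivalence contrapositively, $\tau_i\ne 0$ means $\lambda_i$ is neither $0$ nor a generator; hence for all $i\ge 2$ the value $\lambda_i$ is a non-generator. Since $\lambda_0=0$ is not a generator either, and $\lambda_1$ is a generator (as noted earlier, $\lambda_1$ is always a generator, consistently with $\tau_1=0$), the only generator of $\Lambda$ is $\lambda_1$. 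Therefore $\Lambda=\langle\lambda_1\rangle$.

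To finish, I would invoke the elementary fact that a numerical semigroup generated by a single element $a=\lambda_1$ equals $\{0,a,2a,3a,\dots\}$, whose complement in ${\mathbb N}_0$ is finite only when $a=1$ (the generators of a numerical semigroup must be coprime, which for a single generator forces $\lambda_1=1$). Thus $\Lambda=\langle 1\rangle={\mathbb N}_0$, establishing uniqueness. I do not expect a real obstacle here: the argument rests entirely on the already-established characterization of the zeros of $\tau$, and the only points demanding care are the bookkeeping that the prescribed sequence vanishes \emph{exactly} at $i=0,1$ and the observation that a one-generator numerical semigroup with finite complement must be trivial. (One could instead handle the case $c>2$ through Lemma~\ref{lemma:precharacterization}, whose part~1 places the first repeated value $c-g-1$ at index $2c-g-2$ rather than at the index $2c-2g-2$ demanded by $\lfloor i/2\rfloor$, forcing $g=0$; but this requires treating $c=2$ separately and is clumsier than the generator argument.)
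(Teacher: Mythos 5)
Your proof is correct, and it takes a genuinely different route from the paper's. The paper counts multiplicities of repeated values: for $c=2$ one has $\tau_0=\tau_1=\tau_2=0$, and for $c>2$ Lemma~\ref{lemma:precharacterization} gives $\tau_{2c-g-2}=\tau_{2c-g-1}=c-g-1$ with that same value also occurring at index $2c-g-3$ or $2c-g-4$; hence every non-trivial semigroup has some value occurring at least three times in its $\tau$ sequence, while in $0,0,1,1,2,2,\dots$ every value occurs exactly twice. You instead count zeros: by the remark following the definition of $\tau$ (namely, $\tau_i=0$ if and only if $\lambda_i$ is $0$ or a generator), the hypothesis $\tau_i\geq 1$ for $i\geq 2$ forces the generator set to be exactly $\{\lambda_1\}$, so $\Lambda=\langle\lambda_1\rangle$, and finiteness of the complement (equivalently, coprimality of the generating set) forces $\lambda_1=1$, i.e.\ $\Lambda={\mathbb N}_0$. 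Your argument is more elementary and self-contained---it uses nothing beyond that remark and the basic facts about generators---and it proves slightly more: the trivial semigroup is the unique one whose $\tau$ sequence vanishes only at $i=0,1$, irrespective of the later values. What the paper's route buys is economy within its own development: Lemma~\ref{lemma:precharacterization} is needed anyway for the theorem that $\tau$ determines the semigroup, and the ``value repeated three times'' argument exactly parallels the proof of Lemma~\ref{lemma:nutrivialunic} for the $\nu$ sequence, so the paper gets this lemma nearly for free from machinery it must build regardless. (Your parenthetical alternative, matching the index of the first repeated value against $2c-2g-2$, is essentially the paper's proof in disguise.)
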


\begin{proof}
It is enough to check that for any other numerical semigroup there is
a value in the $\tau$ sequence that appears at least three times.
If $c=2$ then $\tau_0=\tau_1=\tau_2=0$. If $c>2$, by Lemma~\ref{lemma:precharacterization},
$\tau_{2c-g-2}=\tau_{2c-g-1}$ and they are equal to at least one of
$\tau_{2c-g-3}$ and $\tau_{2c-g-4}$.
\end{proof}

\begin{theorem}
The $\tau$ sequence of a numerical semigroup
determines it.
\end{theorem}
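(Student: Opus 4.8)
The plan is to reconstruct $\Lambda$ from its $\tau$ sequence following exactly the strategy of the proof of Theorem~\ref{theorem: nu determina lambda}: first detect the trivial semigroup, then read off the conductor $c$ and the genus $g$ from a distinguished ``regular tail'' of the sequence, and finally recover which integers below $c$ are non-gaps. If the $\tau$ sequence equals $0,0,1,1,2,2,\dots$, then by Lemma~\ref{lemma:trivialcase} we have $\Lambda=\mathbb{N}_0$; so from now on assume $\Lambda\neq\mathbb{N}_0$, whence $c\geq 2$.

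The first and main step is to recover the integer $k=2c-g-2$ from the sequence alone. For $j\in\mathbb{N}_0$ say that property $S(j)$ holds if $\tau_i=\tau_j+\lfloor (i-j)/2\rfloor$ for every $i\geq j$; that is, from index $j$ on the sequence is a clean staircase climbing by one every two steps. I claim that $k=\min\{j:S(j)\text{ holds}\}$. Assuming $c>2$, part~1 of Lemma~\ref{lemma:precharacterization} gives precisely $S(k)$, together with $\tau_k=c-g-1$. For minimality I would first observe the easy implication $S(j)\Rightarrow S(j+2)$, and then show that $S(k-1)$ and $S(k-2)$ both fail: $S(k-1)$ would force $\tau_{k+1}=\tau_k+1$, contradicting $\tau_k=\tau_{k+1}$ from part~1, while $S(k-2)$ would force $\tau_{k-2}=\tau_{k-1}=c-g-2$, contradicting part~2 of Lemma~\ref{lemma:precharacterization}. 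Any $j<k$ reaches either $k-1$ or $k-2$ (according to its parity) through the chain $S(j)\Rightarrow S(j+2)\Rightarrow\cdots$, so $S(j)$ fails for every $j<k$. The remaining case $c=2$, i.e. $\Lambda=\{0\}\cup\{i\geq 2\}$, is the unique numerical semigroup of conductor $2$ and is checked directly. Once $k$ is known, reading the single value $\tau_k=c-g-1$ and using $2c-g-2=k$ yields
$$c=k+1-\tau_k,\qquad g=k-2\tau_k.$$

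For the final step I would recover the full enumeration $\lambda$, equivalently the membership of every integer. From $c$ and $g$ we already know $0\in\Lambda$, that $\Lambda\supseteq\{i\geq c\}$, and that $\lambda_i=g+i$ whenever $\lambda_i\geq c$. For the remaining finitely many positive non-gaps below $c$ I would use the observation recorded after the definition of $\tau$: every $j\in\mathbb{N}_0$ occurs in the sequence, and the least index $i$ with $\tau_i=j$, call it $s(j)$, satisfies $\lambda_{s(j)}=2\lambda_j$. Thus, reading $s(j)$ off the sequence gives the relation $\lambda_{s(j)}=2\lambda_j$ for every index $j$. Seeding with the known values $\lambda_i=g+i$ (for $\lambda_i\geq c$), I would apply the rule: whenever $\lambda_{s(j)}$ is already known, set $\lambda_j=\lambda_{s(j)}/2$. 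Since $\lambda_{s(j)}=2\lambda_j>\lambda_j$ for $j\geq 1$, a downward induction on the value works: the largest positive non-gap $v<c$ satisfies $2v\in\Lambda$ with $2v>v$, so $2v\geq c$ and is seeded (in particular $v\geq c/2$), and in general for any positive non-gap $v<c$ the value $2v\in\Lambda$ exceeds $v$ and is therefore already determined. Hence the rule terminates and reconstructs every $\lambda_j$, i.e. all of $\Lambda$.

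The delicate point, and the one I expect to require the most care, is pinning down $k$ exactly rather than up to an off-by-one ambiguity: the interleaved even- and odd-indexed staircases can already look regular one or two indices before $k$, so that the naive ``last place where $\tau_{i+2}\neq\tau_i+1$'' does not determine $k$ uniquely (the two worked examples in Table~\ref{table} land at $k-2$ and at $k-1$ respectively). Only the combination of part~1 of Lemma~\ref{lemma:precharacterization} (which forces $\tau_k=\tau_{k+1}$, killing $S(k-1)$) and part~2 (which rules out the clean continuation demanded by $S(k-2)$) excludes both $k-1$ and $k-2$, after which the propagation $S(j)\Rightarrow S(j+2)$ disposes of all smaller indices at once.
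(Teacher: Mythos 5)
Your proof is correct and follows essentially the same route as the paper's: detect the trivial semigroup via Lemma~\ref{lemma:trivialcase}, identify $k=2c-g-2$ as the minimal starting index of the two-step staircase using both parts of Lemma~\ref{lemma:precharacterization}, recover $c=k-\tau_k+1$ and $g=k-2\tau_k$ (with $c=2$ checked separately), and reconstruct the remaining non-gaps downward via $\lambda_j=\frac{1}{2}\min\{\lambda_i:\tau_i=j\}$. The only difference is expository: you spell out the minimality argument --- ruling out $S(k-1)$ by part~1, ruling out $S(k-2)$ by part~2, and propagating $S(j)\Rightarrow S(j+2)$ --- which the paper's proof leaves implicit in the phrase ``by Lemma~\ref{lemma:precharacterization}, $k=2c-g-2$''.
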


\begin{proof}
Let $k$ be the minimum integer
such that $\tau_{k+2i}=\tau_{k+2i+1}$
and $\tau_{k+2i+2}=\tau_{k+2i+1}+1$
for all
$i\in{\mathbb N}_0$.
If $k=0$, by Lemma~\ref{lemma:trivialcase},
$\Lambda={\mathbb N}_0$. Assume $k>0$.

By Lemma~\ref{lemma:precharacterization},
if $c>2$,
$k=2c-g-2$ and $\tau_k=c-g-1$.
So,
$$\left\{\begin{array}{rcl}
c&=&k-\tau_k+1\\
g&=&k-2\tau_k\\
\end{array}\right.$$
This result can be extended to the case $c=2$
since in this case $c=2$, $g=1$, $k=1$ and $\tau_k=0$.

This determines $\lambda_i=i+g$ for all $i\geq c-g$.
Now we can determine $\lambda_{c-g-1}$, $\lambda_{c-g-2}$, and so on using that
the smallest $j$ for which $\tau_j=i$ corresponds to $\lambda_j=2\lambda_i$.
That is, $\lambda_i=\frac{1}{2}\min\{\lambda_j:\tau_j=i\}$.
\end{proof}

We have just seen that any numerical semigroup
is uniquely determined by its $\tau$ sequence.
The next lemma shows that no finite subset of $\tau$
can determine the numerical semigroup.
This result is analogous to \cite[Proposition 2.2.]{Bras:2007_IEEE_ANote}.
In this case it refers to the $\nu$ sequence
instead of the $\tau$ sequence.

\begin{lemma}
\label{lemma:tau2}
Let $r$ be a positive integer.
Let $\Lambda$ be a numerical semigroup with enumeration $\lambda$
and let $m$ be an integer with $m\geq 2$.
Define the semigroup $\Lambda'=m\Lambda\cup\{i\in{\mathbb N}_0: i\geq
m\lambda_r\}$
and let $\tau^{\Lambda}$ and $\tau^{\Lambda'}$ be the $\tau$ sequence corresponding to
$\Lambda$ and $\Lambda'$ respectively.
Then $\tau_i^{\Lambda'}=\tau_i^{\Lambda}$
for all $i\leq r$ and
$\Lambda'\neq \Lambda$.
\end{lemma}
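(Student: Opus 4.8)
The plan is to mirror the proofs of Lemma~\ref{lemma:oplus2} and Lemma~\ref{lemma:nu2}, exploiting the fact that the enumeration of $\Lambda'$ agrees with a scaled copy of the enumeration of $\Lambda$ below the threshold $m\lambda_r$. First I would record the enumeration fact: since the elements of $m\Lambda$ lying strictly below $m\lambda_r$ are exactly $m\lambda_0,\dots,m\lambda_{r-1}$, while the dense tail $\{i\in{\mathbb N}_0:i\geq m\lambda_r\}$ contributes only values at least $m\lambda_r$, the enumeration $\lambda'$ of $\Lambda'$ satisfies $\lambda'_i=m\lambda_i$ for all $i\leq r$. Next I would replace the definition of $\tau$ by the characterization noted immediately after it: $\tau_i$ is the largest $j$ with $2\lambda_j\leq\lambda_i$ and $\lambda_i-\lambda_j\in\Lambda$, since the requirement $j\leq k\leq i$ with $j\oplus k=i$ is precisely the pair of conditions $2\lambda_j\leq\lambda_i$ and $\lambda_i-\lambda_j\in\Lambda$ (and it automatically forces $j\leq i$). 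Note that $j=0$ always qualifies, so both maxima are over nonempty sets and are well defined.

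With these reductions, fix $i\leq r$. Every index $j$ relevant to $\tau_i^{\Lambda'}$ satisfies $j\leq i\leq r$, hence $\lambda'_j=m\lambda_j$. The halving constraint transfers trivially by scaling: $2\lambda'_j\leq\lambda'_i$ is equivalent to $2\lambda_j\leq\lambda_i$. The step that needs care is the membership condition $\lambda'_i-\lambda'_j=m(\lambda_i-\lambda_j)\in\Lambda'$. Here I would observe that for $j\leq i\leq r$ one has $0\leq\lambda_i-\lambda_j\leq\lambda_i\leq\lambda_r$, so $m(\lambda_i-\lambda_j)\leq m\lambda_r$, with equality only in the degenerate case $i=r,\ j=0$. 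Thus $m(\lambda_i-\lambda_j)$ lands strictly below the dense tail of $\Lambda'$ (or equals its first element $m\lambda_r$, a case handled separately since $\lambda_r\in\Lambda$ and $m\lambda_r\in\Lambda'$ hold automatically), so it belongs to $\Lambda'$ if and only if it belongs to $m\Lambda$, i.e. if and only if $\lambda_i-\lambda_j\in\Lambda$.

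Combining the two equivalences shows that, for each $i\leq r$, the set $\{j:2\lambda'_j\leq\lambda'_i,\ \lambda'_i-\lambda'_j\in\Lambda'\}$ coincides with $\{j:2\lambda_j\leq\lambda_i,\ \lambda_i-\lambda_j\in\Lambda\}$; taking the maximum of each yields $\tau_i^{\Lambda'}=\tau_i^{\Lambda}$. The inequality $\Lambda'\neq\Lambda$ is immediate, exactly as in Lemma~\ref{lemma:nu2}. The only genuinely delicate point is the bookkeeping that the difference $m(\lambda_i-\lambda_j)$ cannot be inadvertently captured by the dense tail of $\Lambda'$; once the bound $\lambda_i-\lambda_j\leq\lambda_r$ is in hand this is routine, and it is precisely the place where the hypothesis $i\leq r$ is used.
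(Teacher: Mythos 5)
Your proof is correct, and its backbone --- the observation that $\lambda'_i=m\lambda_i$ for all $i\leq r$ --- is exactly the paper's starting point; where you diverge is in which description of $\tau$ you scale. The paper works directly from the definition $\tau_i=\max\{j\in{\mathbb N}_0:\mbox{ exists } k \mbox{ with } j\leq k\leq i \mbox{ and } j\oplus k=i\}$: every quantity appearing there is an enumeration value at an index at most $i\leq r$, so the equivalence of $\lambda_j+\lambda_k=\lambda_i$ and $\lambda'_j+\lambda'_k=\lambda'_i$ follows instantly from the scaling, and the dense tail $\{n\in{\mathbb N}_0: n\geq m\lambda_r\}$ of $\Lambda'$ never needs to be mentioned. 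You instead pass to the characterization recorded after the definition ($\tau_i$ is the largest $j\in N_i$ with $2\lambda_j\leq\lambda_i$), whose membership condition $\lambda'_i-\lambda'_j\in\Lambda'$ refers to the semigroup itself rather than only to its first $r+1$ elements; that is what forces your extra bookkeeping that $m(\lambda_i-\lambda_j)$ cannot be absorbed by the tail, including the separate equality case $i=r$, $j=0$. You carry that bookkeeping out correctly, so both arguments are sound: the paper's choice of the index-based definition makes the confinement to the initial segment automatic and yields a three-line proof, while your route is slightly longer but has the small virtue of making explicit why the threshold $m\lambda_r$ is exactly high enough for the conclusion to hold.
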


\begin{proof}
It is obvious that $\Lambda'\neq \Lambda$.
Let $\lambda'$ be the enumeration of $\Lambda'$.
For all $i\leq r$, $\lambda'_i=m\lambda_i$.
In particular, if $j\leq i\leq r$, then
it exists $k$ with $j\leq k\leq i$
and $\lambda_j+\lambda_k=\lambda_i$
if and only if
it exists $k$ with $j\leq k\leq i$
and $\lambda'_j+\lambda'_k=\lambda'_i$.
Hence, by the definition of the $\tau$ sequence,
$\tau_i^{\Lambda'}=\tau_i^{\Lambda}$.
\end{proof}

As a consequence of Lemma~\ref{lemma:tau2},
although the sequence $\tau$ of a numerical semigroup
uniquely determines it, any subset $(\tau_i)_{0\leq i\leq r-1}$ is
exactly the set of the first $r$ values of the $\tau$ sequence of
infinitely many semigroups. In fact, by varying $m$ among
the positive integers, we get an infinite set of semigroups, all of
them sharing the first $r$ values in the $\tau$ sequence.

\subsection{Counting}

We are interested on the number $n_g$ of numerical semigoups of genus
$g$.
It is obvious that $n_0=1$ since ${\mathbb N}_0$ is the unique
numerical semigroup of genus $0$.
On the other hand, if $1$ is in a numerical semigroup, then any
non-negative integer must belong also
to the numerical semigroup, because any non-negative
integer is a finite sum of $1$'s. Thus, the unique numerical
semigroup with genus $1$ is $\{0\}\cup\{i\in{\mathbb N}_0:i\geq 2\}$
and $n_1=1$.
In \cite{Komeda89} all terms of the sequence $n_g$
are computed up to genus $37$
and the terms of genus up to $50$ are computed in \cite{Bras:Fibonacci}.
Recently we computed $n_{51}=164253200784$ and $n_{52}=266815155103$.
It is conjectured in \cite{Bras:Fibonacci}
that the sequence given by the numbers $n_g$ of numerical semigroups of genus $g$ 
asymptotically behaves like the Fibonacci numbers and so it increases
by a portion of the golden ratio.
More precisely, it is conjectured that 
(1) $n_g\geq n_{g-1}+n_{g-2}$, (2) $\lim_{g\to\infty}\frac{n_{g-1}+n_{g-2}}{n_g}=1$, 
(3) $\lim_{g\to\infty}\frac{n_{g}}{n_{g-1}}=\phi$, where $\phi=\frac{1+\sqrt{5}}{2}$ is the golden ratio. Notice that (2) and (3) are equivalent.
By now, only some bounds are known for $n_g$ which become very poor when 
$g$ approaches infinity \cite{Bras:2009_JPAA,Elizalde}.
Other contributions related to this sequence can be found in 
\cite{Komeda89,Komeda98,Sloane,Medeiros,BAdM,BrasBulygin,Zhao,Kaplan,BlancoGarciaPuerto}.

\begin{table}
\begin{center}
\resizebox{.65\textwidth}{!}{%
\begin{tabular}{|ccccc|}
\hline $g$ & $n_g$ & $n_{g-1}+n_{g-2}$ & $\frac{n_{g-1}+n_{g-2}}{n_g}$ & $\frac{n_{g}}{n_{g-1}} $\\\hline 
0 & 1 & & & \\
1 & 1 & & & 1 \\
2 & 2 & 2 &
1
&
2
\\
3 & 4 & 3 &
0.75
&
2
\\
4 & 7 & 6 &
0.857143
&
1.75
\\
5 & 12 & 11 &
0.916667
&
1.71429
\\
6 & 23 & 19 &
0.826087
&
1.91667
\\
7 & 39 & 35 &
0.897436
&
1.69565
\\
8 & 67 & 62 &
0.925373
&
1.71795
\\
9 & 118 & 106 &
0.898305
&
1.76119
\\
10 & 204 & 185 &
0.906863
&
1.72881
\\
11 & 343 & 322 &
0.938776
&
1.68137
\\
12 & 592 & 547 &
0.923986
&
1.72595
\\
13 & 1001 & 935 &
0.934066
&
1.69088
\\
14 & 1693 & 1593 &
0.940933
&
1.69131
\\
15 & 2857 & 2694 &
0.942947
&
1.68754
\\
16 & 4806 & 4550 &
0.946733
&
1.68218
\\
17 & 8045 & 7663 &
0.952517
&
1.67395
\\
18 & 13467 & 12851 &
0.954259
&
1.67396
\\
19 & 22464 & 21512 &
0.957621
&
1.66808
\\
20 & 37396 & 35931 &
0.960825
&
1.66471
\\
21 & 62194 & 59860 &
0.962472
&
1.66312
\\
22 & 103246 & 99590 &
0.964589
&
1.66006
\\
23 & 170963 & 165440 &
0.967695
&
1.65588
\\
24 & 282828 & 274209 &
0.969526
&
1.65432
\\
25 & 467224 & 453791 &
0.971249
&
1.65197
\\
26 & 770832 & 750052 &
0.973042
&
1.64981
\\
27 & 1270267 & 1238056 &
0.974642
&
1.64792
\\
28 & 2091030 & 2041099 &
0.976121
&
1.64613
\\
29 & 3437839 & 3361297 &
0.977735
&
1.64409
\\
30 & 5646773 & 5528869 &
0.979120
&
1.64254
\\
31 & 9266788 & 9084612 &
0.980341
&
1.64108
\\
32 & 15195070 & 14913561 &
0.981474
&
1.63973
\\
33 & 24896206 & 24461858 &
0.982554
&
1.63844
\\
34 & 40761087 & 40091276 &
0.983567
&
1.63724
\\
35 & 66687201 & 65657293 &
0.984556
&
1.63605
\\
36 & 109032500 & 107448288 &
0.985470
&
1.63498
\\
37 & 178158289 & 175719701 &
0.986312
&
1.63399
\\
38 & 290939807 & 287190789 &
0.987114
&
1.63304
\\
39 & 474851445 & 469098096 &
0.987884
&
1.63213
\\
40 & 774614284 & 765791252 &
0.988610
&
1.63128
\\
41 & 1262992840 & 1249465729 &
0.989290
&
1.63048
\\
42 & 2058356522 & 2037607124 &
0.989919
&
1.62975
\\
43 & 3353191846 & 3321349362 &
0.990504
&
1.62906
\\
44 & 5460401576 & 5411548368 &
0.991053
&
1.62842
\\
45 & 8888486816 & 8813593422 &
0.991574
&
1.62781
\\
46 & 14463633648 & 14348888392 &
0.992067
&
1.62723
\\
47 & 23527845502 & 23352120464 &
0.992531
&
1.62669
\\
48 & 38260496374 & 37991479150 &
0.992969
&
1.62618
\\
49 & 62200036752 & 61788341876 &
0.993381
&
1.62570
\\
50 & 101090300128 & 100460533126 &
0.993770
&
1.62525
\\
51 & 164253200784 & 163290336880 &
0.994138
&
1.62482
\\
52 & 266815155103 & 265343500912 &
0.994484
&
1.62441
\\
\hline
\end{tabular}

}
\end{center}
\caption{Computational results on the number of numerical semigroups
  up to genus $52$.}
\label{taula}
\end{table}

In Table~\ref{taula} there are the results obtained for all numerical semigroups
with genus up to $52$. For each genus we wrote the number of numerical semigroups of
the given genus, the Fibonacci-like-estimated value given by the sum of the number of semigroups
of the two previous genus, the value of the quotient
$\frac{n_{g-1}+n_{g-2}}{n_g}$,
and the value of the quotient $\frac{n_g}{n_{g-1}}$.
In Figure~\ref{graficfib} and Figure~\ref{graficor}
we depicted the behavior of these quotients. From
these graphics one can predict that
$\frac{n_{g-1}+n_{g-2}}{n_g}$
approaches $1$ as $g$ approaches infinity
whereas
$\frac{n_g}{n_{g-1}}$
approaches the golden ratio as $g$ approaches infinity.

\begin{figure}
\begin{center}
\resizebox{.9\textwidth}{!}{
\def\simbol{\circle*{2}}
\begin{picture}(350.000004,150.000000)
\put(20.000000,20.000000){\vector(1,0){315.000004}}
\put(20.000000,20.000000){\vector(0,1){115.000000}}
\put(342.500004,20.000000){\makebox(0,0){\phantom{mm}\huge{$g$}}}
\put(20.000000,142.500000){\makebox(0,0){\huge{$\frac{n_{g-1}+n_{g-2}}{n_g}$}}}
\put(10.000000,120.000000){\makebox(0,0){\huge{1}}}
{\thinlines\put(20.000000,120.000000){\line(1,0){315.000004}}}
\put(10.000000,20.000000){\makebox(0,0){\huge{0}}}
\put(19.000000,120.000000){\line(1,0){2.000000}}
\put(320.000004,10.000000){\makebox(0,0){\huge{52}}}
\put(320.000004,19.000000){\line(0,1){2.000000}}
\put(31.538462,120.000000){\simbol}
\put(37.307693,95.000000){\simbol}
\put(43.076923,105.714298){\simbol}
\put(48.846154,111.666698){\simbol}
\put(54.615385,102.608700){\simbol}
\put(60.384616,109.743602){\simbol}
\put(66.153847,112.537302){\simbol}
\put(71.923078,109.830500){\simbol}
\put(77.692308,110.686297){\simbol}
\put(83.461539,113.877602){\simbol}
\put(89.230770,112.398602){\simbol}
\put(95.000001,113.406600){\simbol}
\put(100.769232,114.093299){\simbol}
\put(106.538463,114.294697){\simbol}
\put(112.307693,114.673300){\simbol}
\put(118.076924,115.251697){\simbol}
\put(123.846155,115.425898){\simbol}
\put(129.615386,115.762098){\simbol}
\put(135.384617,116.082503){\simbol}
\put(141.153848,116.247202){\simbol}
\put(146.923079,116.458900){\simbol}
\put(152.692309,116.769500){\simbol}
\put(158.461540,116.952599){\simbol}
\put(164.230771,117.124898){\simbol}
\put(170.000002,117.304201){\simbol}
\put(175.769233,117.464198){\simbol}
\put(181.538464,117.612101){\simbol}
\put(187.307694,117.773498){\simbol}
\put(193.076925,117.912002){\simbol}
\put(198.846156,118.034102){\simbol}
\put(204.615387,118.147398){\simbol}
\put(210.384618,118.255402){\simbol}
\put(216.153849,118.356700){\simbol}
\put(221.923079,118.455602){\simbol}
\put(227.692310,118.547000){\simbol}
\put(233.461541,118.631197){\simbol}
\put(239.230772,118.711401){\simbol}
\put(245.000003,118.788399){\simbol}
\put(250.769234,118.861003){\simbol}
\put(256.538465,118.929000){\simbol}
\put(262.307695,118.991901){\simbol}
\put(268.076926,119.050403){\simbol}
\put(273.846157,119.105299){\simbol}
\put(279.615388,119.157399){\simbol}
\put(285.384619,119.206698){\simbol}
\put(291.153850,119.253100){\simbol}
\put(296.923080,119.296898){\simbol}
\put(302.692311,119.338102){\simbol}
\put(308.461542,119.377000){\simbol}
\put(314.230773,119.413800){\simbol}
\put(320.000004,119.448401){\simbol}
\put(320.000004,119.448401){\simbol}
\end{picture}

}
\end{center}
\caption{Behavior of the quotient $\frac{n_{g-1}+n_{g-2}}{n_g}$.
The values in this graphic correspond to
the values in Table~\ref{taula}.}
\label{graficfib}
\end{figure}
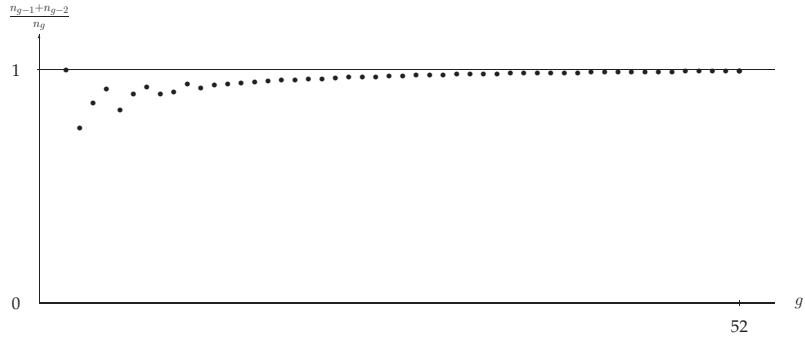

\begin{figure}
\begin{center}
\resizebox{.9\textwidth}{!}
{
\def\simbol{\circle*{2}}
\begin{picture}(350.000004,150.000000)
\put(20.000000,20.000000){\vector(1,0){315.000004}}
\put(20.000000,20.000000){\vector(0,1){115.000000}}
\put(342.500004,20.000000){\makebox(0,0){\phantom{mm}\huge{$g$}}}
\put(20.000000,142.500000){\makebox(0,0){\huge{$\frac{n_g}{n_{g-1}}$}}}
\put(10.000000,100.901699){\makebox(0,0){\huge{$\phi$}}}
{\thinlines\put(20.000000,100.901699){\line(1,0){315.000004}}}
\put(10.000000,20.000000){\makebox(0,0){\huge{0}}}
\put(19.000000,100.901699){\line(1,0){2.000000}}
\put(320.000004,10.000000){\makebox(0,0){\huge{52}}}
\put(320.000004,19.000000){\line(0,1){2.000000}}
\put(25.769231,70.000000){\simbol}
\put(31.538462,120.000000){\simbol}
\put(37.307693,120.000000){\simbol}
\put(43.076923,107.500000){\simbol}
\put(48.846154,105.714501){\simbol}
\put(54.615385,115.833498){\simbol}
\put(60.384616,104.782499){\simbol}
\put(66.153847,105.897499){\simbol}
\put(71.923078,108.059503){\simbol}
\put(77.692308,106.440498){\simbol}
\put(83.461539,104.068501){\simbol}
\put(89.230770,106.297500){\simbol}
\put(95.000001,104.543997){\simbol}
\put(100.769232,104.565502){\simbol}
\put(106.538463,104.377003){\simbol}
\put(112.307693,104.109002){\simbol}
\put(118.076924,103.697498){\simbol}
\put(123.846155,103.697999){\simbol}
\put(129.615386,103.403999){\simbol}
\put(135.384617,103.235502){\simbol}
\put(141.153848,103.156002){\simbol}
\put(146.923079,103.003002){\simbol}
\put(152.692309,102.793999){\simbol}
\put(158.461540,102.716000){\simbol}
\put(164.230771,102.598501){\simbol}
\put(170.000002,102.490498){\simbol}
\put(175.769233,102.396001){\simbol}
\put(181.538464,102.306498){\simbol}
\put(187.307694,102.204503){\simbol}
\put(193.076925,102.126999){\simbol}
\put(198.846156,102.054001){\simbol}
\put(204.615387,101.986499){\simbol}
\put(210.384618,101.922001){\simbol}
\put(216.153849,101.862003){\simbol}
\put(221.923079,101.802499){\simbol}
\put(227.692310,101.748998){\simbol}
\put(233.461541,101.699502){\simbol}
\put(239.230772,101.651998){\simbol}
\put(245.000003,101.606501){\simbol}
\put(250.769234,101.563997){\simbol}
\put(256.538465,101.524003){\simbol}
\put(262.307695,101.487501){\simbol}
\put(268.076926,101.453001){\simbol}
\put(273.846157,101.421000){\simbol}
\put(279.615388,101.390500){\simbol}
\put(285.384619,101.361502){\simbol}
\put(291.153850,101.334502){\simbol}
\put(296.923080,101.309003){\simbol}
\put(302.692311,101.285000){\simbol}
\put(308.461542,101.262499){\simbol}
\put(314.230773,101.241000){\simbol}
\put(320.000004,101.220502){\simbol}
\put(320.000004,101.220502){\simbol}
\end{picture}

}
\end{center}
\caption{Behavior of the quotient $\frac{n_{g}}{n_{g-1}}$.
The values in this graphic correspond to
the values in Table~\ref{taula}.}
\label{graficor}
\end{figure}

The number $n_g$ 
is usually studied by means of the tree rooted at the semigroup ${\mathbb N}_0$
and for which the children of a semigroup are the semigroups obtained by taking out
one by one its generators larger than or equal to its conductor \cite{Bras:Fibonacci,Bras:2009_JPAA,BrasBulygin,Elizalde}.
This tree was 
previously used in \cite{
RoGaGaJi:fundamentalgaps,RoGaGaJi:oversemigroups}.
It is illustrated in Figure~\ref{fig:tree}. 
It contains all semigroups exactly once and the semigroups at depth $g$ 
have genus $g$. So, $n_g$ is the number of nodes of the tree at depth $g$.
Some alternatives 
for counting semigroups of a given genus without using this tree
have been considered in \cite{BAdM,BlancoGarciaPuerto,Zhao}.

\begin{figure}
\compatiblegastexun
\setvertexdiam{4}
\letvertex S1=(40,50)
\letvertex S23=(40,40)
\letvertex S345=(47,30)
\letvertex S25=(33,30)
\letvertex S4567=(61,20)
\letvertex S357=(47,20)
\letvertex S34=(33,20)
\letvertex S27=(19,20)
\letvertex S56789=(80,10)
\letvertex P56789=(80,5)
\letvertex S4679=(63,10)
\letvertex P4679=(63,5)
\letvertex S457=(48,10)
\letvertex P457=(48,5)
\letvertex S456=(35,10)
\letvertex P456=(35,5)
\letvertex S378=(23,10)
\letvertex P378=(23,5)
\letvertex S35=(12,10)
\letvertex P35=(12,5)
\letvertex S29=(0,10)
\letvertex P29=(0,5)
\begin{center}
\resizebox{.8\textwidth}{!}{
\begin{picture}(80,50)
\drawvertex(S1){$<{\bf 1}>$}
\drawvertex(S23){$<{\bf 2},{\bf 3}>$}
\drawvertex(S345){$<{\bf 3},{\bf 4},{\bf 5}>$}
\drawvertex(S25){$<2,{\bf 5}>$}
\drawvertex(S4567){$<{\bf 4},{\bf 5},{\bf 6},{\bf 7}>$}
\drawvertex(S357){$<3,{\bf 5},{\bf 7}>$}
\drawvertex(S34){$<3,4>$}
\drawvertex(S27){$<2,{\bf 7}>$}
\drawvertex(S56789){$<{\bf 5},{\bf 6},{\bf 7},{\bf 8},{\bf 9}>$}
\drawvertex(P56789){$\vdots$}
\drawvertex(S4679){$<4,{\bf 6},{\bf 7},{\bf 9}>$}
\drawvertex(P4679){$\vdots$}
\drawvertex(S457){$<4,5,{\bf 7}>$}
\drawvertex(P457){$\vdots$}
\drawvertex(S456){$<4,5,6>$}
\drawvertex(P456){$\vdots$}
\drawvertex(S378){$<3,{\bf 7},{\bf 8}>$}
\drawvertex(P378){$\vdots$}
\drawvertex(S35){$<3,5>$}
\drawvertex(P35){$\vdots$}
\drawvertex(S29){$<2,{\bf 9}>$}
\drawvertex(P29){$\vdots$}
\drawundirectededge(S1,S23){}
\drawundirectededge(S23,S345){}
\drawundirectededge(S23,S25){}
\drawundirectededge(S345,S4567){}
\drawundirectededge(S345,S357){}
\drawundirectededge(S345,S34){}
\drawundirectededge(S25,S27){}
\drawundirectededge(S4567,S56789){}
\drawundirectededge(S4567,S4679){}
\drawundirectededge(S4567,S457){}
\drawundirectededge(S4567,S456){}
\drawundirectededge(S357,S378){}
\drawundirectededge(S357,S35){}
\drawundirectededge(S27,S29){}
\end{picture}}\end{center}
\caption{Recursive construction of numerical semigroups of genus $g$ from numerical
semigroups of genus $g-1$.
Generators larger than the conductor are written in bold face.}
\label{fig:tree}
\end{figure}
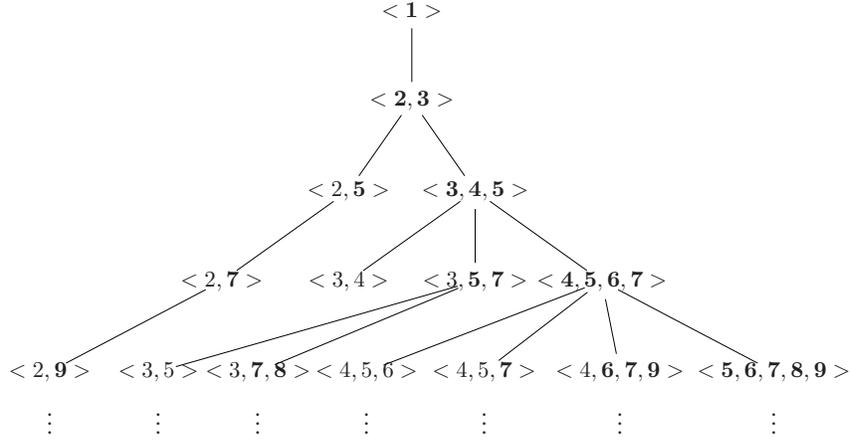


\section{Numerical semigroups and codes}

\subsection{One-point codes and their decoding}

\subsubsection{One-point codes}

\paragraph{Linear codes}

A \alert{linear code} $C$ of length $n$ over the alphabet ${\mathbb F}_q$
is a vector subspace of ${\mathbb F}_q^n$. 
Its elements are called \alert{code words}.
The \alert{dimension} $k$
of the code is the dimension of $C$ as a subspace of ${\mathbb F}_q^n$. 
The \alert{dual code} of $C$ is $C^\perp=\{v\in{\mathbb F}_q^n:v\cdot c=0 \mbox{ for all }c\in C\}$. It is a linear code with the same length as $C$ and with dimension $n-k$.

The \alert{Hamming distance} between two vectors of the same length is the number of positions in which they do not agree. The \alert{weight} of a vector
is the number of its non-zero components or, equivalently, its Hamming distance to the zero vector.
The \alert{minimum distance} $d$ of a linear code $C$ is the minimum Hamming distance between two code words in $C$. Equivalently, it is the minimum weight of all code words in $C$. 
The \alert{correction capability}
of a code is the maximum number of errors that can be added to any code word,
with the code word being still uniquelly identifiable.
The correction capability of a linear code with minimum distance $d$ is 
$\lfloor\frac{d-1}{2}\rfloor$.

\paragraph{One-point codes}
Let $P$ be a rational point of the 
algebraic smooth curve ${\mathcal X}_F$ defined over ${\mathbb F}_q$
with Weierstrass semigroup
$\Lambda$.
Suppose that the enumeration of $\Lambda$ is $\lambda$.
Recall that $A=\bigcup_{m\geq 0}L(m P)$ is the ring of
rational functions having poles only at $P$.
We will say that the \alert{order} of $f\in A\setminus\{0\}$ is $s$ if 
$v_P(f)=-\lambda_s$. The order of $0$ is considered to be either $-1$ \cite{O'Sullivan:newcodes} or $-\infty$ \cite{HoLiPe:agc}. In the present work we will consider the order of $0$ to be $-1$ although both would be fine.
We denote the order of $f$ by $\rho(f)$.

One can find an infinite basis
$z_0,z_1,\dots,z_i,\dots$ of $A$
such that $v_P(z_i)=-\lambda_i$ or, equivalently, $\rho(z_i)=i$.
Consider a set of rational points $P_1,\dots,P_n$ different from $P$
and the map $\varphi:A\longrightarrow{{\mathbb F}_q}^n$ defined by
$\varphi(f)=(f(P_1),\dots,f(P_n))$.
To each finite subset $W\subseteq{\mathbb N}_0$ we associate
the \alert{one-point} code $$C_W=<\varphi(z_i):i\in W>^\perp=<(z_i(P_1),\dots,z_i(P_n)):i\in W>^\perp.$$
We say that $W$ is the set of \alert{parity checks} of $C_W$.
The one-point codes for which $W=\{0,1,\dots,m\}$ are called \alert{classical} 
one-point codes. In this case we write $C_m$ for $C_W$.

\subsubsection{Decoding one-point codes}

This section presents a sketch of a decoding algorithm for $C_W$.
The aim is to justify the conditions guaranteeing correction of errors.
Suppose that a code word $c\in C_W$ is sent and that an error $e$ is added to it so that the received word is $u=c+e$. We will use $t$ for the number of non-zero positions in $e$.

\begin{definition}
A polynomial $f$ is an \alert{error-locator} of an error vector $e$ 
if and only if $f(P_i)=0$ whenever $e_i\neq 0$.
The \alert{footprint} of $e$ is the set $$\Delta_e={\mathbb N}_0\setminus\{\rho(f):f\mbox{ is an error-locator}\}.$$
\end{definition}
It is well known that $\#\Delta_e=t$ and that $\Delta_e$ is $\preceq$-closed. That is, 
if $i\preceq j$ and  $j\in \Delta_e$ then $i\in\Delta_e$.
If for each minimal element in ${\mathbb N}_0\setminus\Delta_e$ with respect to $\preceq$
we can find an error-locator with that order then localization of errors is guaranteed.

\begin{definition}
Define the \alert{syndrome} of orders $i,j$ as $$s_{ij}=\sum_{l=1}^nz_i(P_l)z_j(P_l)e_l.$$
The \alert{syndrome matrix} $S^{rr'}$ is the $(r+1)\times (r'+1)$
matrix with $S^{rr'}_{ij}=s_{ij}$ for $0\leq i\leq r$ and $0\leq j\leq r'$.
\end{definition}
The matrix $S^{r'r}$ is the matrix $S^{r'r}$, transposed.
By Lemma~\ref{lemma:propsv}, 
if
$i\oplus j=k$ then there exist $a_0,\dots,a_k$ such that
$z_iz_j=a_kz_k+\dots+a_0z_0$. Define
$s_{k}=\sum_{l=1}^nz_k(P_l)e_l.$
Then 
\begin{equation}
\label{eqnsynds}
s_{ij}=a_ks_k+\dots+a_0s_0.
\end{equation}

The syndromes depend on $e$ which is initially unknown. So, in general, $s_{ij}$ and $s_k$ are unknown.
For a polynomial $f=z_r+a_{r-1}z_{r-1}+\dots+a_0z_0$,
being an error locator means that $(a_0,\dots,a_r)S^{rr'}=0$ 
for all
$r'>0$.\footnote{$(a_0,\dots,a_r)S^{rr'}=(\sum_{i=0}^{r}a_is_{i0},\dots,\sum_{i=0}^{r}a_is_{ir'})=(\sum_{i=0}^{r}a_i\sum_{k=1}^nz_i(P_k)z_0(P_k)e_k,\dots,\sum_{i=0}^{r}a_i\sum_{k=1}^nz_i(P_k)z_{r'}(P_k)e_k)
=(\sum_{k=1}^nz_0(P_k)e_k\sum_{i=0}^{r}a_iz_i(P_k),\dots,\sum_{k=1}^nz_{r'}(P_k)e_k\sum_{i=0}^{r}a_iz_i(P_k))
=(\sum_{k=1}^nz_0(P_k)e_kf(P_k),\dots,\sum_{k=1}^nz_{r'}(P_k)e_kf(P_k))=(0,\dots,0)$ }
Conversely, there exists $M$ such that if 
$(a_0,\dots,a_r)S^{rr'}=0$ 
for all
$r'$ with 
$r\oplus r'\leq M$ 
then $f$ is an error locator.
\footnote{An explanation for this can be found in \cite{Pretzel,HoLiPe:agc,BrOS:2006_AAECC}.}

Hence, we look for pairs $(r,r')$ with $r\oplus r'$ large enough
such that $(x_0,\dots,x_{r-1},1)S^{rr'}=0$ has a non-zero solution 
(and indeed we look for this solution).
Notice that if $(x_0,\dots,x_{r-1},1)S^{rr'}=0$ has a non-zero solution then so does 
$(x_0,\dots,x_{r-1},1)S^{r{r''}}$ for all $r''<r'$.

The first difficulty is that only a few syndromes are known. 
This is overcome by using a majority voting procedure.

We proceed iteratively, considering the non-gaps of 
$\Lambda$ by increasing order. 
Suppose that all syndromes $s_{ij}$ 
are known for $i\oplus j< k$ and we want to compute the syndromes 
$s_{ij}$ with $i\oplus j=k$.
By equation~\ref{eqnsynds} this is equivalent to finding $s_k$.
If $k\in W$ then the computation can be done by just
using the definition of $C_W$:
$s_k=\sum_{l=1}^nz_k(P_l)e_l=\sum_{l=1}^nz_k(P_l)u_l-\sum_{l=1}^nz_k(P_l)c_l=\sum_{l=1}^nz_k(P_l)u_l$.
Otherwise we establish a voting procedure to determine $s_{k}$ 

In the voting procedure the voters are 
the elements $i\preceq k$ for which 
$(x_0,\dots,x_{i-1},1)S^{i,k\ominus i-1}=0$ 
and
$(y_0,\dots,y_{k\ominus i-1},1)S^{k\ominus i,i-1}=0$ 
have
non-zero solutions.
We consider the value
\begin{center}
\resizebox{.97\textwidth}{!}{
$\tilde s_{i,k\ominus i}=(s_{0,(k\ominus i)},\dots,s_{i-1,(k\ominus i)})\cdot(x_0,\dots,x_{i-1})
=(s_{i,0},\dots,s_{i,(k\ominus i)-1})\cdot(y_0,\dots,y_{(k\ominus i)-1}).$}
\footnote{
If 
$(x_0,\dots,x_{i-1},1)S^{i,j-1}=0$ 
and
$(y_0,\dots,y_{j-1},1)S^{j,i-1}=0$
then $(s_{0,j},\dots,s_{i-1,j})\cdot(x_0,\dots,x_{i-1})
=(s_{i,0},\dots,s_{i,j-1})\cdot(y_0,\dots,y_{j-1}).$
Indeed,
$(x_0,\dots,x_{i-1},1)S^{i,j-1}=0$ 
implies that
$(s_{i,0},\dots,s_{i,j-1})=-(x_0,\dots,x_{i-1})S^{i-1,j-1}$
and similarly,
 $(y_0,\dots,y_{j-1},1)S^{j,i-1}=0$
implies that
$(s_{0,j},\dots,s_{i-1,j})=-(y_0,\dots,y_{j-1})S^{j-1,i-1}$.
Now,
$(s_{0,j},\dots,s_{i-1,j})\cdot (x_0,\dots,x_{i-1})=
-(y_0,\dots,y_{j-1})S^{j-1,i-1}\cdot (x_0,\dots,x_{i-1})=
-(y_0,\dots,y_{j-1})S^{j-1,i-1} (x_0,\dots,x_{i-1})^T=
-(x_0,\dots,x_{i-1})S^{i-1,j-1} (y_0,\dots,y_{j-1})^T=
-(x_0,\dots,x_{i-1})S^{i-1,j-1}\cdot (y_0,\dots,y_{j-1})=
(s_{i,0},\dots,s_{i,j-1})\cdot (y_0,\dots,y_{j-1})$
}
\end{center}
as a candidate for $s_{i,k\ominus i}$.
Notice that if $s_{i,k\ominus i}=\tilde s_{i,k\ominus i}$ 
then $(x_0,\dots,x_{i-1},1)S^{i,k\ominus i}=0$ 
and
$(y_0,\dots,y_{k\ominus i-1},1)S^{k\ominus i,i}=0$. 
Otherwise, if $s_{i,k\ominus i}\neq \tilde s_{i,k\ominus i}$ then
there exist no error-locators of order $i$
and no error-locators of order $k\ominus i$.
Since $\tilde s_{i,k\ominus i}$ is a candidate for $s_{i,k\ominus i}$,
the associated candidate $\tilde s_k$ for $s_k$ will be derived from the equation
$\tilde s_{i,k\ominus i}=a_k\tilde s_k+a_{k-1} s_{k-1}+\dots+a_0s_0,$
where $a_0,\dots,a_k$ are such that $z_iz_{k\ominus i}=a_kz_k+\dots+a_0z_0$. 
That is, $\tilde s_k=\frac{\tilde s_{i,k\ominus i}-a_{k-1} s_{k-1}-\dots-a_0s_0}{a_k}.$

\begin{lemma}
\label{lemma:condicionspercalcularsindromes}
\begin{itemize}
\item
If $i\in N_k$ and $i,k\ominus i\not\in\Delta_e$ 
then $i$ is a voter and its vote coincides with $s_k$.
\item
If a voter $i$ votes for a wrong cadidate for $s_k$ then $i,k\ominus i\in\Delta_e$.
\item 
If $\nu_k>2\#(N_k\cap\Delta_e)$ 
then a majority of voters vote for the right value $s_k$.
\end{itemize}
\end{lemma}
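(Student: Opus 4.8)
The plan is to reduce all three items to a single computational fact: for $i\in N_k$ and $j:=k\ominus i$, the candidate $\tilde s_{i,j}$ equals the true syndrome $s_{i,j}$ whenever there is an error-locator of order $i$ \emph{or} of order $j$. Granting this, the vote $\tilde s_k$ agrees with $s_k$ precisely when $\tilde s_{i,j}=s_{i,j}$, because $\tilde s_k$ is extracted from $\tilde s_{i,j}=a_k\tilde s_k+a_{k-1}s_{k-1}+\dots+a_0 s_0$ while the truth satisfies $s_{i,j}=a_k s_k+a_{k-1}s_{k-1}+\dots+a_0 s_0$ with the \emph{same} already-known lower syndromes $s_0,\dots,s_{k-1}$ and the same nonzero leading coefficient $a_k$; equality of the left-hand sides is therefore equivalent to $\tilde s_k=s_k$.

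To establish the computational fact I would exploit the symmetry $s_{pq}=s_{qp}$ of the syndrome matrix together with the two voter equations. Assume an error-locator of order $i$ exists, with coefficients $(x^\ast_0,\dots,x^\ast_{i-1},1)$; being an error-locator it satisfies $s_{iq}+\sum_{p<i}x^\ast_p s_{pq}=0$ for every $q$. Reading the candidate through the $y$-side as $\tilde s_{i,j}=\sum_{q<j}y_q s_{iq}$ and substituting this relation, while expanding the true $s_{i,j}=-\sum_{p<i}x^\ast_p s_{pj}$ by means of the voter equation $(y_0,\dots,y_{j-1},1)S^{j,i-1}=0$ (which by symmetry reads $s_{pj}+\sum_{q<j}y_q s_{pq}=0$ for $p<i$), both quantities collapse to the single symmetric double sum $\sum_{p<i}\sum_{q<j}x^\ast_p y_q s_{pq}$; hence $\tilde s_{i,j}=s_{i,j}$. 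The case of an error-locator of order $j$ is symmetric, exchanging the roles of $(i,x)$ and $(j,y)$. The main obstacle is exactly here: the voter may have selected a solution $(x,y)$ bearing no relation to any error-locator, so one must show that the computed value is forced independently of that choice; the double-sum argument achieves this, since the final expression is pinned to the true $s_{i,j}$ regardless of which valid solution was used.

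Items 1 and 2 then follow immediately. For item 1, if $i,k\ominus i\notin\Delta_e$ then error-locators of orders $i$ and $k\ominus i$ exist; their coefficient vectors are nonzero solutions of $(x_0,\dots,x_{i-1},1)S^{i,k\ominus i-1}=0$ and $(y_0,\dots,y_{k\ominus i-1},1)S^{k\ominus i,i-1}=0$, so $i$ is a voter, and by the computational fact its vote coincides with $s_k$. Item 2 is the contrapositive of that fact: if voter $i$ votes for a wrong candidate then $\tilde s_{i,k\ominus i}\neq s_{i,k\ominus i}$, so no error-locator of order $i$ and none of order $k\ominus i$ can exist, i.e.\ $i\in\Delta_e$ and $k\ominus i\in\Delta_e$.

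Finally, for item 3 I would count using the involution $\sigma(i)=k\ominus i$ on $N_k$, which is well defined and bijective because $\lambda_k-\lambda_i\in\Lambda$ forces $\lambda_k-\lambda_{k\ominus i}=\lambda_i\in\Lambda$. Put $A=N_k\cap\Delta_e$ and $A'=\sigma(A)=\{i\in N_k:k\ominus i\in\Delta_e\}$, so that $\#A'=\#A=\#(N_k\cap\Delta_e)$. By item 1 every $i\in N_k\setminus(A\cup A')$ is a correct voter, so the number $C$ of correct voters satisfies $C\geq\nu_k-\#(A\cup A')$; by item 2 every wrong voter lies in $A\cap A'$, so the number $W$ of wrong voters satisfies $W\leq\#(A\cap A')$. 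Inclusion--exclusion then yields $C-W\geq\nu_k-\#(A\cup A')-\#(A\cap A')=\nu_k-\#A-\#A'=\nu_k-2\#(N_k\cap\Delta_e)$, which is strictly positive under the hypothesis $\nu_k>2\#(N_k\cap\Delta_e)$; hence the correct voters strictly outnumber the wrong ones and the majority vote returns $s_k$.
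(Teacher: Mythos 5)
Your proposal is correct and follows essentially the same route as the paper: the double-sum symmetry argument you use to pin the candidate to the true syndrome is exactly the content of the paper's footnote establishing that any pair of voter solutions yields the same value of $\tilde s_{i,k\ominus i}$ (so that an error-locator on either side forces the vote to be the true syndrome), and your inclusion--exclusion count via the involution $i\mapsto k\ominus i$ is just a repackaging of the paper's partition of $N_k$ into the four sets $A,B,C,D$ with $\#D-\#A=\nu_k-2\#(N_k\cap\Delta_e)$. One cosmetic caveat: you mix the paper's sign convention $\tilde s_{i,j}=\sum_{q<j}y_q s_{iq}$ with the true relation $s_{i,j}=-\sum_{p<i}x^{*}_p s_{pj}$, under which your two expansions collapse to double sums of opposite sign; defining the candidate with the minus sign, $\tilde s_{i,j}=-\sum_{q<j}y_q s_{iq}$ (the paper's displayed formula has the same slip, as its subsequent ``Notice that'' sentence requires), makes the collapse a literal equality and the rest of your argument goes through unchanged.
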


\begin{proof}
The first two items are deduced from what has been said before.
Consider the sets 
$A=\{i\in N_k: i, k\ominus i\in \Delta_e\}$, 
$B=\{i\in N_k: i\in\Delta_e, k\ominus i\not\in \Delta_e\}$, 
$C=\{i\in N_k: i\not\in\Delta_e, k\ominus i\in \Delta_e\}$, 
$D=\{i\in N_k: i, k\ominus i\not\in \Delta_e\}$.
By the previous items, the wrong votes are at most $\#A$ while the right votes are at least 
$\#D$.
Obviously,
$\nu_k=\#A+\#B+\#C+\#D$,
$\#(N_k\cap\Delta_e)=\#A+\#B=\#A+\#C$.
So, the difference between the right and the wrong votes is at least 
$\#D-\#A=\nu_k-2\#A-\#B-\#C=\nu_k-2\#(N_k\cap\Delta_e)>0$.
\end{proof}

The conclusion of this section is the next theorem.

\begin{theorem}
\label{t:correctionrequirement}
If $\nu_i>2\#(N_i\cap\Delta_e)$ 
for all $i\not\in W$ then $e$ is correctable by $C_W$.
\end{theorem}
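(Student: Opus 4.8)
The plan is to show that, under the stated hypothesis, the decoding procedure sketched above computes the true value of every syndrome $s_k$, and then to argue that complete knowledge of the syndromes suffices both to locate and to evaluate the error $e$. The organising idea is an induction over the non-gaps $k$ of $\Lambda$ taken in increasing order, with inductive claim that the algorithm outputs the correct $s_k$; once this is established, recovery of $e$ follows from the localization discussion preceding the theorem.

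For the inductive step I would assume that $s_{k'}$ has been computed correctly for every non-gap $k'<k$, which by equation~(\ref{eqnsynds}) is equivalent to knowing every syndrome $s_{ij}$ with $i\oplus j<k$. I then split into two cases according to whether $k$ is a parity check. If $k\in W$, the value is read off directly from the received word as $s_k=\sum_{l=1}^n z_k(P_l)u_l$, since $c\in C_W$ forces $\sum_{l=1}^n z_k(P_l)c_l=0$. If $k\notin W$, the algorithm resorts to majority voting, and this is exactly where the hypothesis is used: because $k\notin W$ we have $\nu_k>2\#(N_k\cap\Delta_e)$, so the third item of Lemma~\ref{lemma:condicionspercalcularsindromes} guarantees that a strict majority of the voters vote for the correct $s_k$, and hence the vote returns the true value.

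The step I expect to require the most care is checking that the inductive hypothesis genuinely supplies all the data that the voting at stage $k$ consumes, i.e.\ that the recursion is well-founded. Each voter $i\preceq k$ forms its candidate $\tilde s_{i,k\ominus i}$ from syndromes carrying indices such as $(i-1,k\ominus i)$ and $(i,(k\ominus i)-1)$, together with $s_0,\dots,s_{k-1}$, and only then is $\tilde s_k$ extracted. I would verify, using that $\oplus$ is compatible with the natural order of ${\mathbb N}_0$, that every index $(a,b)$ appearing here satisfies $a\oplus b<k$; for example $(i-1)\oplus(k\ominus i)<i\oplus(k\ominus i)=k$ and $i\oplus((k\ominus i)-1)<k$. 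This order-compatibility is precisely what makes the inductive hypothesis applicable at stage $k$, and noting that the quantitative assumption is imposed on exactly the indices $k\notin W$ where no direct computation is available is the conceptual crux of the argument.

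Finally, once every $s_k$ is known, equation~(\ref{eqnsynds}) determines the whole syndrome matrix $S^{rr'}$, so for each minimal element $r$ of ${\mathbb N}_0\setminus\Delta_e$ with respect to $\preceq$ I can solve the homogeneous system $(a_0,\dots,a_{r-1},1)S^{rr'}=0$ and obtain an error-locator of order $r$; as recalled before the theorem, this localizes the $t=\#\Delta_e$ error positions. With the positions in hand, the nonzero entries of $e$ are the unique solution of the resulting linear system expressing the (now fully known) syndromes in terms of the unknown error values, so $e$ is recovered and is therefore correctable by $C_W$.
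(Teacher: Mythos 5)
Your proposal is correct and follows essentially the same route as the paper: the paper states this theorem as the conclusion of its decoding sketch, i.e.\ iterative syndrome computation where $s_k$ is read off directly when $k\in W$ and recovered by majority voting via the third item of Lemma~\ref{lemma:condicionspercalcularsindromes} when $k\notin W$, followed by error localization and evaluation. Your write-up merely makes that sketch explicit as an induction over the non-gaps, including the well-foundedness check that each vote only consumes syndromes $s_{ij}$ with $i\oplus j<k$, which the paper leaves implicit.
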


\subsection{The $\nu$ sequence, classical codes, and Feng-Rao improved codes}

From the equality $\#\Delta_e=t$ we deduce the next lemma.

\begin{lemma}
\label{lemma: condicio mes feble}
If
the number $t$ of errors in $e$ satisfies
$t\leq\lfloor\frac{\nu_i-1}{2}\rfloor$,
then
$\nu_i>2\#(N_i\cap\Delta_e)$.
\end{lemma}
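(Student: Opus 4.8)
The plan is to exploit the single fact that is flagged immediately before the statement, namely the equality $\#\Delta_e=t$, together with the most basic set-theoretic bound on an intersection. Since $N_i\cap\Delta_e$ is a subset of $\Delta_e$, we always have $\#(N_i\cap\Delta_e)\leq\#\Delta_e$, and by hypothesis $\#\Delta_e=t$. This is the only place where the footprint $\Delta_e$ and the quantity $N_i$ interact, and it is entirely elementary: no structural property of $\preceq_\Lambda$ or of the semigroup is needed, only that one of the two sets being intersected has exactly $t$ elements.

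First I would record the chain $\#(N_i\cap\Delta_e)\leq\#\Delta_e=t$ and double it to get $2\#(N_i\cap\Delta_e)\leq 2t$. Then I would feed in the hypothesis $t\leq\lfloor\frac{\nu_i-1}{2}\rfloor$. The only point requiring a moment of care is the behaviour of the floor: since $\lfloor x\rfloor\leq x$, we have $2\lfloor\frac{\nu_i-1}{2}\rfloor\leq\nu_i-1$, so that $2t\leq\nu_i-1$. Combining the two estimates yields
\[
2\#(N_i\cap\Delta_e)\leq 2t\leq\nu_i-1<\nu_i,
\]
which is exactly the desired strict inequality $\nu_i>2\#(N_i\cap\Delta_e)$.

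There is no genuine obstacle in this proof; the statement is a one-line consequence of the cardinality bound. If anything, the only thing to watch is that the strictness in the conclusion is produced by the final step $\nu_i-1<\nu_i$ rather than by the floor, so I would make sure to carry the inequality all the way to $\nu_i-1$ before concluding, instead of stopping at $2t\leq\nu_i$, which would only give a non-strict bound. This lemma is precisely the weaker, more usable sufficient condition that feeds into Theorem~\ref{t:correctionrequirement}: it replaces the error-dependent quantity $\#(N_i\cap\Delta_e)$ by the single global parameter $t$, and the proof is designed to make that replacement transparent.
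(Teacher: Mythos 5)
Your proof is correct and takes essentially the same route as the paper: the paper gives no explicit argument beyond remarking that the lemma follows from the equality $\#\Delta_e=t$, and your chain $2\#(N_i\cap\Delta_e)\leq 2t\leq 2\left\lfloor\frac{\nu_i-1}{2}\right\rfloor\leq\nu_i-1<\nu_i$ is precisely that intended deduction, with the floor and the strictness handled correctly.
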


\subsubsection{The $\nu$ sequence and the minimum distance of classical codes}

Theorem~\ref{t:correctionrequirement} and 
Lemma~\ref{lemma: condicio mes feble} can be used in order to get an estimate
of the minimum distance of a one-point code. The next definition arises from \cite{FeRa:dmin,HoLiPe:agc,KiPe}.

\begin{definition}
The \alert{order bound} on the minimum distance of the classical code $C_W$, with $W=\{0,\dots,m\}$ is 
$$d_{ORD}(C_m)=\min\{\nu_i:i>m\}.$$
\end{definition}

The order bound is also referred to as the \alert{Feng-Rao bound}.
The order bound is proved to be a lower bound
on the minimum distance for classical codes \cite{FeRa:dmin,HoLiPe:agc,KiPe}.

\begin{lemma}
$d(C_m)\geq d_{ORD}(C_m)$.
\end{lemma}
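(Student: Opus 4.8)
The plan is to prove the sharp bound directly, by bounding from below the weight of an arbitrary nonzero codeword $c\in C_m$, instead of going through the decoding radius. I would \emph{not} route the argument through Theorem~\ref{t:correctionrequirement} and Lemma~\ref{lemma: condicio mes feble}: together they only guarantee correction of up to $\lfloor\frac{\delta-1}{2}\rfloor$ errors, where $\delta=d_{ORD}(C_m)=\min\{\nu_i:i>m\}$ (since $\nu_i\geq\delta$ for every $i>m$, i.e. every $i\notin W$). That yields $d\geq 2\lfloor\frac{\delta-1}{2}\rfloor+1$, which equals $\delta$ when $\delta$ is odd but falls short by one when $\delta$ is even. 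So I would keep only the syndrome formalism and argue about ranks.

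First I would fix a nonzero $c\in C_m$ and regard it as an ``error'' vector, writing $s_{ij}=\sum_{l=1}^n z_i(P_l)z_j(P_l)c_l$ and $s_k=\sum_{l=1}^n z_k(P_l)c_l$ as in the decoding section. Because $c\in C_m=\langle\varphi(z_i):i\leq m\rangle^\perp$, we have $s_k=0$ for all $k\leq m$. Since the evaluation map $\varphi$ is surjective (as is standard for one-point codes), $c\neq 0$ forces $s_k\neq 0$ for some $k$; let $k^\ast$ be the smallest such order. Then $k^\ast>m$, and hence $\nu_{k^\ast}\geq\delta$.

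The key step is to exhibit a $\nu_{k^\ast}\times\nu_{k^\ast}$ submatrix of the syndrome matrix that has full rank. I would index rows and columns by the elements of $N_{k^\ast}=\{i:i\preceq k^\ast\}$ and look at the entries $s_{ij}$ with $i,j\in N_{k^\ast}$. The map $i\mapsto k^\ast\ominus i$ is an involution of $N_{k^\ast}$: indeed $i\preceq k^\ast$ means $\lambda_i+\lambda_{k^\ast\ominus i}=\lambda_{k^\ast}$, whence $\lambda_{k^\ast}-\lambda_{k^\ast\ominus i}=\lambda_i\in\Lambda$; and it is order-reversing because $\oplus$ is compatible with the order of ${\mathbb N}_0$. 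For the anti-diagonal pairs $j=k^\ast\ominus i$ we have $i\oplus j=k^\ast$, and writing $z_iz_{k^\ast\ominus i}=a_{k^\ast}z_{k^\ast}+\dots+a_0z_0$ with $a_{k^\ast}\neq 0$ (by Lemma~\ref{lemma:propsv}, since $v_P(z_iz_{k^\ast\ominus i})=-\lambda_{k^\ast}$), equation~\ref{eqnsynds} gives $s_{i,\,k^\ast\ominus i}=a_{k^\ast}s_{k^\ast}\neq 0$, the lower syndromes vanishing by minimality of $k^\ast$. For pairs with $i\oplus j<k^\ast$ the same expansion and minimality give $s_{ij}=0$, and $i\oplus j<k^\ast$ holds exactly when $j$ precedes $k^\ast\ominus i$. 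Thus the submatrix is anti-triangular with a nonvanishing anti-diagonal, so its rank is $\nu_{k^\ast}$.

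Finally I would note that the full syndrome matrix factors as $S=ZDZ^T$ with $D=\mathrm{diag}(c_1,\dots,c_n)$, so every submatrix has rank at most $\mathrm{rank}(D)=\#\{l:c_l\neq 0\}=\mathrm{wt}(c)$. Combining the two bounds, $\delta\leq\nu_{k^\ast}=\mathrm{rank}(\text{submatrix})\leq\mathrm{wt}(c)$; since $c$ was an arbitrary nonzero codeword, $d(C_m)\geq\delta=d_{ORD}(C_m)$. I expect the main obstacle to be the middle step: checking that the chosen $N_{k^\ast}\times N_{k^\ast}$ block really is anti-triangular, which rests on the order-reversing involution $i\mapsto k^\ast\ominus i$ and on controlling which entries expand purely in the already-vanished syndromes $s_l$ with $l<k^\ast$.
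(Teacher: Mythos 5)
Your proof is correct, and it is genuinely different from what the chapter itself does: the text never proves this lemma internally, it only motivates it through Theorem~\ref{t:correctionrequirement} and Lemma~\ref{lemma: condicio mes feble} and then defers the actual proof to \cite{FeRa:dmin,HoLiPe:agc,KiPe}. Your diagnosis of why that deferral is necessary is exactly right: the decoding machinery only guarantees correction of all errors of weight up to $\lfloor\frac{\delta-1}{2}\rfloor$ with $\delta=d_{ORD}(C_m)$, hence only $d\geq 2\lfloor\frac{\delta-1}{2}\rfloor+1$, which misses the claimed bound by one whenever $\delta$ is even; no purely correction-capability argument can close that parity gap. Your replacement argument --- take a nonzero $c\in C_m$, let $k^\ast$ be the smallest order with $s_{k^\ast}\neq 0$ (which exists because $\varphi$ is surjective, and satisfies $k^\ast>m$ because $c\in C_m$), show that the $N_{k^\ast}\times N_{k^\ast}$ block of the syndrome matrix is anti-triangular with nonvanishing anti-diagonal via the order-reversing involution $i\mapsto k^\ast\ominus i$ together with equation~(\ref{eqnsynds}), and then cap its rank by $\mathrm{wt}(c)$ through the factorization $S=ZDZ^T$ --- is sound in every step, and it yields the sharp inequality $\mathrm{wt}(c)\geq\nu_{k^\ast}\geq\delta$. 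This is in essence the classical Feng--Rao rank argument of the references the chapter cites, so it is new relative to the paper but not to the literature the paper leans on. What your version buys is self-containedness: it stays entirely within the syndrome formalism already set up ($s_{ij}$, $s_k$, the $\oplus/\ominus$ calculus and $N_i$, $\nu_i$), whereas the chapter's presentation trades completeness for brevity. Two points you should make explicit in a polished writeup, both routine: (i) surjectivity of $\varphi$, e.g.\ because the kernel of $\varphi$ restricted to $L(mP)$ is $L(mP-P_1-\dots-P_n)$, so for $m$ large the image has dimension $n$ by Riemann--Roch; and (ii) the determinant expansion showing that a square matrix whose entries below the anti-diagonal vanish and whose anti-diagonal entries are nonzero has full rank.
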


From Lemma~\ref{lemma:nu} we deduce that
$\nu_{i+1}\leq \nu_{i+2}$ and so
$d_{ORD}(C_i)=\nu_{i+1}$ for all $i\geq 2c-g-2$.

A refined version of the order bound is 
$$d^{P_1,\dots,P_n}_{ORD}(C_m)=\min\{\nu_i:i>m, C_i\neq C_{i+1}\}.$$
While $d_{ORD}$ 
only depends on the Weierstrass semigroup, 
$d^{P_1,\dots,P_n}_{ORD}$ depends also on the points $P_1,\dots,P_n$.
Since our point of view is that of numerical semigroups we will concentrate on $d_{ORD}$.

Generalized Hamming weights are a generalization of the minimum distance of a code with many applications to coding theory but also to other fields such as cryptography. For the generalized Hamming weights of one-point codes there is a generalization of the order bound based also on the associated Weierstrass semigroups. We will not discuss this topic here but the reader interested in it can see \cite{HeijnenPellikaan,FarranMunuera}.

\subsubsection{On the order bound on the minimum distance}
\label{section: frd acute}

In this section we will find a formula for the smallest $m$ for which 
$d_{ORD}(C_i)=\nu_{i+1}$
for all $i\geq m$, for the case of acute semigroups.
At the end we will use Munuera-Torres and Oneto-Tamone's results to generalize this formula.

\begin{remark}
\label{remark: c'+d geq c}
Let $\Lambda$ be a non-ordinary numerical semigroup
with conductor $c$, subconductor $c'$ and
dominant $d$.
Then, $c'+d\geq c$.
Indeed,
$c'+d\in\Lambda$
and by Remark~\ref{remark: ordinary implica d,c' son 0}
it is strictly larger than $d$.
So, it must be larger than
or equal to
$c$.
\end{remark}

\begin{theorem}
\label{theorem: ultim punt decreixement per acute semigroups}
Let $\Lambda$ be a non-ordinary acute 
numerical semigroup
with enumeration $\lambda$, conductor $c$, subconductor $c'$
and
dominant $d$.
Let 
\begin{equation}
\label{eq:m}
m=\min\{\lambda^{-1}(c+c'-2),
\lambda^{-1}(2d)\}.
\end{equation}
Then,
\begin{enumerate}
\item $\nu_m>\nu_{m+1}$
\item $\nu_i\leq\nu_{i+1}$
for all $i>m$.
\end{enumerate}
\end{theorem}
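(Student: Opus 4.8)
The idea is to reduce both claims to the first difference of a single gap-counting function and then to read that difference off from the top two runs of gaps. First I would note that $n^\ast:=\min\{c+c'-2,\,2d\}$ is at least $c$: indeed $c+c'-2\ge c$ since $c'\ge 2$ for a non-ordinary semigroup, and $2d\ge c'+d\ge c$ by Remark~\ref{remark: c'+d geq c}. Both $c+c'-2$ and $2d$ are therefore non-gaps, and since $\lambda$ is increasing, $\lambda_m=n^\ast$. Consequently every $i\ge m$ has $\lambda_i\ge c$, so $g(i)=g$ and $\lambda_{i+1}=\lambda_i+1$. Writing $n=\lambda_i$ and $P(n):=\#D(\lambda^{-1}(n))=\#\{\,l:\ l\text{ and }n-l\text{ are both gaps}\,\}$, Lemma~\ref{lemma:nu} gives $\nu_i=n+1-2g+P(n)$, hence
\[
\nu_{i+1}-\nu_i=1+\bigl(P(n+1)-P(n)\bigr).
\]
So statement (1) is equivalent to $P(n^\ast+1)\le P(n^\ast)-2$ and statement (2) to $P(n+1)\ge P(n)-1$ for every non-gap $n>n^\ast$.

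Next I would establish a discrete-derivative formula for $P$. Since $P$ is the autocorrelation of the gap indicator $\chi$, taking a one-step difference replaces one factor $\chi$ by $\chi(m)-\chi(m-1)$, which is supported exactly on the run boundaries: it equals $+1$ at each run start $d_j+1$ (the dominants $d=d_0>d'=d_1>\cdots$) and $-1$ at each run-end successor $c_j$ (the subconductors $c=c_0>c'=c_1>\cdots$). This yields, with $G$ the set of gaps,
\[
P(n+1)-P(n)=A(n)-B(n),\qquad A(n)=\#\{\,j:\ n-d_j\in G\,\},\quad B(n)=\#\{\,j:\ n+1-c_j\in G\,\}.
\]
Thus (1) becomes $B(n^\ast)-A(n^\ast)\ge 2$ and (2) becomes $B(n)-A(n)\le 1$ for $n>n^\ast$, and the whole problem is to decide which differences $n-d_j$ and $n+1-c_j$ land in the top two gap runs $R_0=[d+1,c-1]$ and $R_1=[d'+1,c'-1]$.

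The easy half is the observation that a term $n+1-c_j$ with $j\ge1$ can be a gap only if $n+1-c_j\le c-1$, i.e. $n\le c+c_j-2\le c+c'-2$. Hence $B(n)\le 1$ (only the conductor term can survive) as soon as $n>c+c'-2$; this proves (2) on the whole range $n>c+c'-2$, and in particular settles (2) completely when $n^\ast=c+c'-2$. In that same case (1) is a direct check: at $n=n^\ast$ the conductor term gives $n+1-c=c'-1\in R_1$ and the subconductor term gives $n+1-c'=c-1\in R_0$, so $B(n^\ast)\ge 2$, while $n-d=c+c'-2-d\in[c',d]$ and $n-d_j\ge c$ for $j\ge1$ are non-gaps, so $A(n^\ast)=0$.

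The main obstacle is the remaining case $n^\ast=2d<c+c'-2$, where both the conductor and the subconductor term of $B$ are active and I must rule out $B-A\ge 2$ throughout $2d<n\le c+c'-2$ while forcing $B-A\ge 2$ exactly at $n=2d$. This is precisely where acuteness enters: from $c-d\le c'-d'$ together with $c'\le d$ one gets $c-d\le c'-d'\le d-d'$, that is $2d\ge c+d'$. Using this single inequality I would check that for $2d<n\le c+c'-2$ the values $n+1-c$ and $n+1-c'$ fall into $R_1$ and $R_0$ (so $B=2$), that $n-d\in R_0$ (so $A\ge1$), and that $n-d'>c-1$ with all lower-indexed differences lying outside every gap run (so $A=1$ and no further $B$-term survives); hence $B-A=1$. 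At $n=2d$ the same inequality puts $n+1-c$ into $R_1$ and $n+1-c'$ into $R_0$ while keeping $n-d=d$ and $n-d'\ge c$ non-gaps, giving $B(2d)\ge2$ and $A(2d)=0$, which is (1). Combining the three ranges yields both assertions.
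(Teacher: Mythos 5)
Your proposal is correct, and it takes a genuinely different route from the paper's proof. Both arguments start from Lemma~\ref{lemma:nu} and reduce the monotonicity of $\nu$ to the one-step variation of $\#D$; but from there the paper invokes acuteness at the outset to describe the sets $D(j)$ explicitly for every $\lambda_j\geq c'+d$ (splitting them into cross-run pairs and pairs inside the top gap run) and then tabulates the piecewise changes of the two cardinalities. You instead prove a general identity, the discrete derivative of the gap autocorrelation, $P(n+1)-P(n)=\#\{j:\,n-d_j\in G\}-\#\{j:\,n+1-c_j\in G\}$, which is valid for \emph{every} numerical semigroup, localizes the change of $\#D$ to the gap-run boundaries, and converts the theorem into finitely many membership checks; acuteness is used exactly once, via $c-d\leq c'-d'\leq d-d'$, i.e.\ $2d\geq c+d'$. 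The checks you defer with ``I would check'' do all hold: in the window $2d<n\leq c+c'-2$ one gets $B(n)=2$ because $n+1-c\in[d'+2,\,c'-1]$ and $n+1-c'\in[d+2,\,c-1]$ are gaps while every $c_j$ with $j\geq 2$ satisfies $c_j\leq d'$, whence $n+1-c_j\geq 2d+2-d'\geq c+2$ is a non-gap, and $A(n)=1$ because $n-d\in[d+1,c-1]$ but $n-d'\geq c+1$; at $n^\ast=2d$ the same inequalities give $B\geq 2$, $A=0$. Together with your other two cases this proves both statements, and your intermediate values ($B-A=1$ in the window, $B-A=1$ for $c+d\leq n\leq 2c-2$) agree with the paper's table of $\#D(j+1)-\#D(j)$. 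The trade-off: your route is lighter and more modular, since the derivative identity needs no hypothesis on $\Lambda$ and is a natural tool for attacking the general bound $m\leq\min\{c+c'-2-g,\,2d-g\}$ of Munuera--Torres and Oneto--Tamone quoted later in the chapter, whereas the paper's heavier computation returns strictly more information, namely the full sets $D(j)$ and hence the exact values of $\nu_i$ throughout the range rather than only their monotonicity.
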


\begin{proof}
Following the notations in 
Lemma~\ref{lemma:nu}, 
for $i\geq \lambda^{-1}(c)$,
$g(i)=g$.
Thus, for $i\geq \lambda^{-1}(c)$ we have
\begin{equation}
\label{equation: nu creixent}
\nu_i\leq\nu_{i+1}\mbox{ if and only if }
\#D(i+1)\geq \#D(i)-1.
\end{equation}

Let $l=c-d-1$.
Notice that $l$ is the number of gaps between
the conductor and the dominant.
Since $\Lambda$ is acute,
the $l$
integers before $c'$
are also gaps.
Let us call $k=\lambda^{-1}(c'+d)$.
For all $1\leq i\leq l$, both
$(c'-i)$ and $(d+i)$
are in $D(k)$ because
they are gaps and
$$(c'-i)+(d+i)=c'+d.$$
Moreover, there are no more gaps in $D(k)$ because,
if $j\leq c'-l-1$ then $c'+d-j\geq d+l+1=c$ and so
$c'+d-j\in\Lambda$.
Therefore,
$$D(k)=\{c'-i: 1\leq i \leq l\}\cup\{d+i: 1\leq i \leq l\}.$$

Now suppose that $j\geq k$.
By Remark~\ref{remark: c'+d geq c},
$\lambda_k\geq c$ and so
$\lambda_j=\lambda_k+j-k=c'+d+j-k$.
Then,
$$D(j)=A(j)\cup B(j),$$
where
{\small
\begin{eqnarray*}
A(j)&=&\casos
{\begin{array}{l}\{c'-i: 1\leq i \leq l-j+k\}
\\\cup\{d+i: j-k+1\leq i \leq l\}\end{array}}
{{\tiny if $\lambda_k\leq\lambda_{j}\leq c+c'-2$,}}
{\emptyset}{{\tiny otherwise.}}
\\
B(j)&=&\quatrecasos
{\emptyset}{{\tiny if $\lambda_k\leq\lambda_{j}\leq 2d+1$}}
{\{d+i: 1\leq i \leq \lambda_{j}-2d-1\}}{{\tiny if $2d+2\leq\lambda_{j}\leq c+d$,}}
{\{d+i: \lambda_{j}-d-c+1\leq i \leq l\}}
{{\tiny if $c+d\leq\lambda_{j}\leq 2c-2$,}}
{\emptyset}{{\tiny if $\lambda_{j}\geq 2c-1$.}}
\end{eqnarray*}}
Notice that $A(j)\cap B(j)=\emptyset$ and hence
$$\#D(j)=\#A(j)+\#B(j).$$
We have
\begin{eqnarray*}
\#A(j)&=&\casos
{2(l-j+k)}{if $\lambda_k\leq \lambda_{j}\leq c+c'-2$,}
{0}{otherwise.}\\
\#B(j)&=&\quatrecasos
{0}{if $\lambda_k\leq \lambda_{j}\leq 2d+1$,}
{\lambda_{j}-2d-1}{if $2d+2\leq\lambda_{j}\leq c+d$,}
{2c-1-\lambda_j}{if $c+d\leq\lambda_{j}\leq 2c-2$,}
{0}{if $\lambda_{j}\geq 2c-1$.}
\end{eqnarray*}
So,
\begin{eqnarray*}
\#A(j+1)&=&\casos{\#A(j)-2}{if $\lambda_k\leq
\lambda_{j}\leq c+c'-2$,}{\#A(j)}{otherwise.}
\\
\#B(j+1)&=&\quatrecasos
{\#B(j)}{if $\lambda_k\leq\lambda_{j}\leq 2d$}
{\#B(j)+1}{if $2d+1\leq\lambda_{j}\leq c+d-1$}
{\#B(j)-1}{if $c+d\leq\lambda_{j}\leq 2c-2$}
{\#B(j)}{if $\lambda_{j}\geq 2c-1$}
\end{eqnarray*}
Notice that $c+c'-2< c+d$.
Thus, for $\lambda_j \geq c+d$,
$$\#D(j+1)=\casos{\#D(j)-1}{if $c+d\leq\lambda_{j}\leq 2c-2$,}
{\#D(j)}{if $\lambda_{j}\geq 2c-1$.}$$
Hence, by (\ref{equation: nu creixent}),
$\nu_i\leq \nu_{i+1}$ for all $i\geq \lambda^{-1}(c+d)$
because $\lambda^{-1}(c+d)\geq\lambda^{-1}(c)$.
Now, let us analyze what happens if
$\lambda_{j}< c+d$.

If $c+c'-2\leq 2d$ then
$$\#D(j+1)=\trescasos
{\#D(j)-2}{if $\lambda_k\leq \lambda_{j}\leq c+c'-2$,}
{\#D(j)}{if $c+c'-1\leq\lambda_{j}\leq 2d$,}
{\#D(j)+1}{if $2d+1\leq\lambda_{j}\leq c+d-1$.}$$
and if
$2d +1\leq c+c'-2$ then
$$\#D(j+1)=\trescasos
{\#D(j)-2}{if $\lambda_k\leq \lambda_{j}\leq 2d$,}
{\#D(j)-1}{if $2d+1\leq\lambda_{j}\leq c+c'-2$,}
{\#D(j)+1}{if $c+c'-1\leq\lambda_{j}\leq c+d-1$.}$$

So, by (\ref{equation: nu creixent})
and since both $c+c'-2$ and $2d$ are larger than or equal to $c$,
the result follows.
\end{proof}

\begin{corollary}
\label{corollary:dFR-acute}
Let $\Lambda$ be a non-ordinary acute numerical semigroup
with enumeration $\lambda$, conductor $c$ and subconductor $c'$.
Let $$m=\min\{\lambda^{-1}(c+c'-2),
\lambda^{-1}(2d)\}.$$
Then, $m$ is the smallest integer
for which $$d_{ORD}(C_i)=\nu_{i+1}$$
for all $i\geq m$.
\end{corollary}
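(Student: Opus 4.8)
The plan is to read off the corollary directly from Theorem~\ref{theorem: ultim punt decreixement per acute semigroups}, whose two conclusions about the $\nu$ sequence are exactly what is needed. The only extra ingredient is the definition of the order bound, $d_{ORD}(C_i)=\min\{\nu_j:j>i\}$, so the whole argument is a short deduction rather than a fresh computation. Note that $m$ is the very index singled out in that theorem, which is what makes the two statements line up.

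First I would use part (2) of the Theorem, which asserts $\nu_j\leq\nu_{j+1}$ for all $j>m$. This says precisely that the tail $\nu_{m+1},\nu_{m+2},\dots$ is non-decreasing. Consequently, for every $i\geq m$ the set $\{\nu_j:j>i\}=\{\nu_{i+1},\nu_{i+2},\dots\}$ is a tail of this non-decreasing sequence, so its minimum is attained at its first term: $\min\{\nu_j:j>i\}=\nu_{i+1}$. By the definition of $d_{ORD}$ this is exactly $d_{ORD}(C_i)=\nu_{i+1}$, establishing the equality for all $i\geq m$.

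It then remains to check that $m$ is the smallest such index, i.e.\ that the equality fails at $i=m-1$. Here I would invoke part (1) of the Theorem, $\nu_m>\nu_{m+1}$. Since $d_{ORD}(C_{m-1})=\min\{\nu_j:j\geq m\}\leq\nu_{m+1}<\nu_m$, we obtain $d_{ORD}(C_{m-1})\neq\nu_m$, so the property $d_{ORD}(C_i)=\nu_{i+1}$ cannot hold for all $i\geq m-1$. This pins down $m$ as the minimal index. Because everything reduces to the two already-proved monotonicity facts together with the definition of the order bound, there is no genuine obstacle; the only point requiring care is the bookkeeping that part (2) governs indices $j>m$ (so the non-decreasing tail starts at $m+1$), which is exactly what makes $\nu_{i+1}$ the minimum for $i\geq m$ while still allowing the strict drop $\nu_m>\nu_{m+1}$ to spoil the case $i=m-1$.
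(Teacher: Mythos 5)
Your proposal is correct and is exactly the deduction the paper intends: the corollary is stated as an immediate consequence of Theorem~\ref{theorem: ultim punt decreixement per acute semigroups}, with part (2) making $\nu_{i+1}$ the minimum of the tail $\{\nu_j:j>i\}$ for $i\geq m$, and part (1) providing the strict drop $\nu_m>\nu_{m+1}$ that rules out $i=m-1$ and hence any smaller starting index.
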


\begin{example}
Recall the
Weierstrass semigroup at the point $P_0$
on the Klein quartic that we presented
in Example~\ref{example:klein}.
Its conductor is $5$,
its dominant is $3$
and its subconductor is $3$.
In Table~\ref{table:klein} we have,
for each integer from $0$ to
$\lambda^{-1}(2c-2)$,
the values $\lambda_i$, $\nu_i$ and $d_{ORD}(C_i)$.

For this example,
$\lambda^{-1}(c+c'-2)=\lambda^{-1}(2d)=3$
and so,
$m=\min\{\lambda^{-1}(c+c'-2),
\lambda^{-1}(2d)\}=3$.
We can check that, as
stated in
Theorem~\ref{theorem: ultim punt decreixement per acute semigroups},
$\nu_3>\nu_4$
and $\nu_i\leq \nu_{i+1}$ for all $i>3$.
Moreover,
as stated in Corollary~\ref{corollary:dFR-acute},
$d_{ORD}(C_i)=\nu_{i+1}$
for all $i\geq 3$
while $d_{ORD}(C_2)\neq\nu_3$.
\end{example}

\begin{table}
\caption{Klein quartic}
\label{table:klein}
\centering
\begin{tabular}{|c|c|c|c|}
\hline
$i$ & $\lambda_i$ & $\nu_i$ & $d_{ORD}(C_i)$ \\ \hline
$0$ & $0$ & $1$ & $2$ \\
$1$ & $3$ & $2$ & $2$ \\
$2$ & $5$ & $2$ & $2$ \\
$3$ & $6$ & $3$ & $2$ \\
$4$ & $7$ & $2$ & $4$ \\
$5$ & $8$ & $4$ & $4$ \\
\hline \end{tabular}

\end{table}

\begin{lemma}
\label{lemma: qui es el minim dels dos}
Let $\Lambda$ be a non-ordinary numerical semigroup
with conductor $c$, subconductor $c'$ and dominant~$d$.
\begin{enumerate}
\item If $\Lambda$ is symmetric then $\min\{c+c'-2,2d\}=c+c'-2=2c-2-\lambda_1$,
\item If $\Lambda$ is pseudo-symmetric then $\min\{c+c'-2,2d\}=c+c'-2$,
\item If $\Lambda$ is Arf then $\min\{c+c'-2,2d\}=2d$,
\item If $\Lambda$ is generated by an interval then $\min\{c+c'-2,2d\}=c+c'-2$.
\end{enumerate}
\end{lemma}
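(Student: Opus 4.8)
The plan is to handle the four cases separately, in each case pinning down enough about the triple $(c,c',d)$ from the structural results proved earlier so that the comparison between $c+c'-2$ and $2d$ becomes a one-line inequality. Throughout, non-ordinariness forces $\lambda_1\geq 2$, so $1$ is a gap, and $c'\leq d\leq c-2$ since $c-1$ is a gap while $c'$ and $d$ are non-gaps below $c$.

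For the symmetric and pseudo-symmetric cases I would first show that $d=c-2$. In both situations $1$ is a gap, so by the gap characterizations (Lemma~\ref{lemma:symmetric-caracteritzacio} and Lemma~\ref{lemma:caracteritzaciopseudosimetrics}) the integer $c-1-1=c-2$ is a non-gap; for pseudo-symmetric this uses $1\neq(c-1)/2$, which holds once we discard the case $c=3$ that makes $\Lambda$ ordinary. Hence $d=c-2$ and $2d=2c-4$. Since $c'\leq d=c-2$, we get $c+c'-2\leq 2c-4=2d$, which settles the pseudo-symmetric case immediately. For the symmetric case I still need the value $c'=c-\lambda_1$: the last run of non-gaps is $[c',c-2]$, so by the symmetry $x\mapsto c-1-x$ the integers $1,\dots,c-1-c'$ are all gaps while $c-c'$ (the image of the gap $c'-1$) is a non-gap; comparing with the definition of $\lambda_1$ forces $\lambda_1=c-c'$. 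Then $c+c'-2=2c-2-\lambda_1$, and $\lambda_1\geq 2$ gives $c+c'-2\leq 2c-4=2d$, as required.

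For the Arf case I would invoke sparseness: by Lemma~\ref{lemma: arf te els non-gaps separats} an Arf semigroup has no two consecutive non-gaps below the conductor, so the run of non-gaps between $c'$ and $d$ must reduce to a single point, i.e. $c'=d$. Then $c+c'-2=c+d-2$, and since $d\leq c-2$ we obtain $2d\leq c+d-2=c+c'-2$, whence $\min\{c+c'-2,2d\}=2d$. For the interval case I would use the explicit description from the proof of Lemma~\ref{lemma: exemples acute}(4): there is an integer $k$ with $c=ki$, $c'=(k-1)i$, $d=(k-1)j$, and non-ordinariness gives $k\geq 2$. A short computation reduces the desired inequality $c+c'-2\leq 2d$ to $2(k-1)(j-i)\geq i-2$. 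The needed lower bound comes from the fact that $c=ki$ is the conductor: by Lemma~\ref{lemma: forma semigrup generat per intervals} the block $\{ki,\dots,kj\}$ must meet $\{(k+1)i,\dots,(k+1)j\}$, i.e. $k(j-i)\geq i-1$; combined with $2(k-1)\geq k$ for $k\geq 2$ this yields $2(k-1)(j-i)\geq k(j-i)\geq i-1\geq i-2$, so $c+c'-2\leq 2d$ and $\min\{c+c'-2,2d\}=c+c'-2$.

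The routine parts are the algebraic comparisons once the structure is fixed; the main obstacle is extracting the two quantitative relations that drive the nontrivial cases, namely the identification $c'=c-\lambda_1$ in the symmetric case (obtained from the symmetry involution) and the block-overlap inequality $k(j-i)\geq i-1$ in the interval case, which is precisely what converts the geometric description of $\Lambda$ into the numeric bound needed.
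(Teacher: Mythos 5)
Your proof is correct. For the symmetric, pseudo-symmetric and Arf cases it is essentially the paper's own argument: $d=c-2$ from the gap characterizations (Lemma~\ref{lemma:symmetric-caracteritzacio} and Lemma~\ref{lemma:caracteritzaciopseudosimetrics}, after discarding $c=3$ as ordinary), $c'=c-\lambda_1$ from the symmetry involution, and $c'=d$ from sparseness (Lemma~\ref{lemma: arf te els non-gaps separats}). The interval case is where you take a genuinely different route, and it buys something real. The paper bounds the two differences separately, claiming $c-d\leq j-i$ and $d-c'\geq j-i$ and adding them to get $2d-(c+c'-2)\geq 2$; but the first claim is false in general: for $\Lambda=\langle 3,4\rangle$ one has $k=2$, $c=6$, $d=4$, so $c-d=2>1=j-i$. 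What the conductor condition $(k+1)i\leq kj+1$ actually yields is only $c-d\leq j-i+1$, which still rescues the paper's conclusion (one gets $2d-(c+c'-2)\geq 1>0$) but not the proof as written. Your reduction of the target inequality to $2(k-1)(j-i)\geq i-2$, fed by the conductor condition in the form $k(j-i)\geq i-1$ together with $2(k-1)\geq k$ for $k\geq 2$, sidesteps this off-by-one entirely and is airtight. One wording nit: consecutive blocks $\{ki,\dots,kj\}$ and $\{(k+1)i,\dots,(k+1)j\}$ need not actually meet — contiguity $(k+1)i\leq kj+1$ is all that the conductor forces — but the inequality you extract, $k(j-i)\geq i-1$, is precisely that contiguity condition, so your argument is unaffected.
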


\begin{proof}\mbox{}
\begin{enumerate}
\item We already saw in the proof
of Lemma~\ref{lemma: exemples acute}
that if
$\Lambda$ is symmetric then
$d=c-2$. So, $c+c'-2=d+c'\leq 2d$ because
$c'\leq d$.
Moreover, by Lemma~\ref{lemma:symmetric-implicacio},
any non-negative integer $i$ is a gap if and only if $c-1-i\in\Lambda$.
This implies that
$c'-1=c-1-\lambda_1$ and so
$c'=c-\lambda_1$. Therefore,
$c+c'-2=2c-2-\lambda_1$.
\item If $\Lambda$ is pseudo-symmetric and non-ordinary then
$d=c-2$ because $1$ is a gap different from $(c-1)/2$. 
So, $c+c'-2=d+c'\leq 2d$.
\item If $\Lambda$ is Arf then
$c'=d$. Indeed, if $c'<d$ then
$d-1\in\Lambda$ and, by Lemma~\ref{lemma: arf te els non-gaps separats},
$d-1\geq c$, a contradiction.
Since $d\leq c-2$, we have $2d\leq c+c'-2$.
\item Suppose $\Lambda$ is generated by the interval
$\{i,i+1,\dots,j\}$.
By Lemma~\ref{lemma: forma semigrup generat per intervals},
there exists $k$ such that
$c=k i$ and $d=(k-1) j$.
We have that $c-d\leq j-i$, because otherwise
$(k+1)i-k j=c-d-(j-i)>1$, and hence
$k j+1$ would be a gap greater than $c$.
On the other hand
$d-c'\geq j-i$,
and hence
$2d-(c+c'-2)=d-c+d-c'+2\geq i-j+j-i+2=2$.
\end{enumerate}
\end{proof}

\begin{example}
Consider the Hermitian curve
over ${\mathbb F}_{16}$.
Its numerical semigroup is generated by $4$ and $5$.
So, this is a symmetric numerical semigroup
because it is generated by two coprime integers,
and it is
also a semigroup generated by the interval
$\{4,5\}$.

In Table~\ref{table:hermite} we include,
for each integer from $0$ to $16$,
the values $\lambda_i$, $\nu_i$ and $d_{ORD}(C_i)$.
Notice that in this case the conductor is $12$,
the dominant is $10$ and the subconductor is $8$.
We do not give the values
in the table for
$i> \lambda^{-1}(2c-1)-1=16$
because
$d_{ORD}(C_i)=\nu_{i+1}$ for all $i\geq \lambda^{-1}(2c-1)-1$.
We can check that, as
follows from Theorem~\ref{theorem: ultim punt decreixement per acute semigroups}
and Lemma~\ref{lemma: qui es el minim dels dos},
$\lambda^{-1}(c+c'-2)=12$ is the largest integer $m$
with $\nu_m>\nu_{m+1}$ and so
the smallest integer for which
$d_{ORD}(C_i)=\nu_{i+1}$
for all $i\geq m$.
Notice also that, as
pointed out in Lemma~\ref{lemma: qui es el minim dels dos},
$c+c'-2=2c-2-\lambda_1$.

Furthermore, in this example
there are $64$ rational points
on the curve different from $P_\infty$ 
and the map $\varphi$
evaluating the functions of $A$ at these $64$ points
satisfies
that the words $\varphi(f_0),\dots,\varphi(f_{57})$
are linearly independent whereas $\varphi(f_{58})$
is linearly dependent to the previous ones.
So, $d_{ORD}^{P_1,\dots,P_n}(C_i)=d_{ORD}(C_i)$
for all $i\leq 56$.
\end{example}

\begin{table}
\caption{Hermitian curve}
\label{table:hermite}
\centering
\begin{tabular}{|c|c|c|c|}
\hline
$i$ & $\lambda_i$ & $\nu_i$ & $d_{ORD}(C_i)$ \\ \hline
$0$ & $0$ & $1$ & $2$ \\
$1$ & $4$ & $2$ & $2$ \\
$2$ & $5$ & $2$ & $3$ \\
$3$ & $8$ & $3$ & $3$ \\
$4$ & $9$ & $4$ & $3$ \\
$5$ & $10$ & $3$ & $4$ \\
$6$ & $12$ & $4$ & $4$ \\
$7$ & $13$ & $6$ & $4$ \\
$8$ & $14$ & $6$ & $4$ \\
$9$ & $15$ & $4$ & $5$ \\
$10$ & $16$ & $5$ & $8$ \\
$11$ & $17$ & $8$ & $8$ \\
$12$ & $18$ & $9$ & $8$ \\
$13$ & $19$ & $8$ & $9$ \\
$14$ & $20$ & $9$ & $10$ \\
$15$ & $21$ & $10$ & $12$ \\
$16$ & $22$ & $12$ & $12$ \\
\hline \end{tabular}
\end{table}

\begin{example}
Let us consider now the
semigroup of the fifth code associated to the second tower of
Garcia and Stichtenoth over ${\mathbb F}_{4}$.
As noticed
in Example~\ref{example:2nd GS es Arf},
this is an Arf numerical semigroup.
We set in Table~\ref{table:garciastichtenoth}
the values $\lambda_i$, $\nu_i$ and $d_{ORD}(C_i)$
for each integer from $0$ to $25$.
In this case the conductor is $24$,
the dominant is $20$ and the subconductor is $20$.
As before, we do not give the values for
$i> \lambda^{-1}(2c-1)-1=25$.
We can check that, as
follows from Theorem~\ref{theorem: ultim punt decreixement per acute semigroups}
and Lemma~\ref{lemma: qui es el minim dels dos},
$\lambda^{-1}(2d)=19$ is the largest integer $m$
with $\nu_m>\nu_{m+1}$ and so,
the smallest integer for which
$d_{ORD}(C_i)=\nu_{i+1}$
for all $i\geq m$.
\end{example}

\begin{table}
\caption{Garcia-Stichtenoth tower}
\label{table:garciastichtenoth}
\centering
\begin{tabular}{|c|c|c|c|}
\hline
$i$ & $\lambda_i$ & $\nu_i$ & $d_{ORD}(C_i)$ \\ \hline
$0$ & $0$ & $1$ & $2$ \\
$1$ & $16$ & $2$ & $2$ \\
$2$ & $20$ & $2$ & $2$ \\
$3$ & $24$ & $2$ & $2$ \\
$4$ & $25$ & $2$ & $2$ \\
$5$ & $26$ & $2$ & $2$ \\
$6$ & $27$ & $2$ & $2$ \\
$7$ & $28$ & $2$ & $2$ \\
$8$ & $29$ & $2$ & $2$ \\
$9$ & $30$ & $2$ & $2$ \\
$10$ & $31$ & $2$ & $2$ \\
$11$ & $32$ & $3$ & $2$ \\
$12$ & $33$ & $2$ & $2$ \\
$13$ & $34$ & $2$ & $2$ \\
$14$ & $35$ & $2$ & $2$ \\
$15$ & $36$ & $4$ & $2$ \\
$16$ & $37$ & $2$ & $2$ \\
$17$ & $38$ & $2$ & $2$ \\
$18$ & $39$ & $2$ & $4$ \\
$19$ & $40$ & $5$ & $4$ \\
$20$ & $41$ & $4$ & $4$ \\
$21$ & $42$ & $4$ & $4$ \\
$22$ & $43$ & $4$ & $6$ \\
$23$ & $44$ & $6$ & $6$ \\
$24$ & $45$ & $6$ & $6$ \\
$25$ & $46$ & $6$ & $6$ \\
\hline \end{tabular}
\end{table}

Munuera and Torres in \cite{MuTo2008} 
and Oneto and Tamone in \cite{OnTa2008} 
proved that
for {\em any} numerical semigroup 
$m\leq\min\{c+c'-2-g,2d-g\}.$
Notice that for acute semigroups 
this inequality is an equality.

Munuera and Torres in \cite{MuTo2008} 
introduced the definition of near-acute semigroups.
They proved that the formula $m=\min\{c+c'-2-g,2d-g\}$
not only applies for acute semigroups but also for near-acute semigroups.
Next we give the definition of near-acute semigroups.

\begin{definition}
A numerical semigroup with conductor $c$, dominant $d$ and subdominant $d'$
is said to be a
\alert{near-acute semigroup} if
either $c-d\leq d-d'$ or $2d-c+1\not\in\Lambda$.
\end{definition}

Oneto and Tamone in \cite{OnTa2008} proved that 
$m=\min\{c+c'-2-g,2d-g\}$ if and {\em only if} $c+c'-2\leq 2d$ or $2d-c+1\not\in\Lambda$.
Let us see next that these conditions 
in Oneto and Tamone's result
are equivalent to having a near-acute semigroup.

\begin{lemma}
For a numerical semigroup the following are equivalent
\begin{enumerate}
\item $c-d\leq d-d'$ or $2d-c+1\not\in\Lambda$,
\item $c+c'-2\leq 2d$ or $2d-c+1\not\in\Lambda$.
\end{enumerate}
\end{lemma}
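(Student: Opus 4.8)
The plan is to exploit that conditions (1) and (2) share the common disjunct $2d-c+1\notin\Lambda$. Writing $A_1$ for ``$c-d\leq d-d'$'', $A_2$ for ``$c+c'-2\leq 2d$'', and $B$ for ``$2d-c+1\notin\Lambda$'', the statement reads $A_1\lor B\iff A_2\lor B$. Since $B$ makes both sides true, everything reduces to comparing $A_1$ and $A_2$ in the case where $B$ fails, i.e.\ where $2d-c+1\in\Lambda$. The two structural facts I would lean on are that (for a non-ordinary semigroup, so that $c'$ and $d'$ are defined) the subconductor satisfies $c'-1\notin\Lambda$, whence the subdominant obeys $d'\leq c'-2$, and that $d'$ is by definition the largest non-gap strictly below $c'$.

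First I would establish the unconditional implication $A_2\Rightarrow A_1$. Rewriting $A_2$ as $c'\leq 2d-c+2$ and combining with $d'\leq c'-2$ gives $d'\leq 2d-c$, which is exactly $A_1$ after rearranging, since $d'\leq 2d-c\iff c-d\leq d-d'$. This already yields $(2)\Rightarrow(1)$: if $B$ holds we are done, and if $A_2$ holds then $A_1$ holds.

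For the reverse direction I would prove $A_1\wedge\neg A_2\Rightarrow B$, which gives $(1)\Rightarrow(2)$ (again the $B$ case is immediate). Assume $A_1$, i.e.\ $d'\leq 2d-c$, and $\neg A_2$, i.e.\ $c+c'-2\geq 2d+1$; the latter rearranges to $2d-c+1\leq c'-2$, so $2d-c+1$ lies strictly below $c'$. If $2d-c+1$ were a non-gap it would be at most the largest non-gap below $c'$, namely $d'$, forcing $2d-c+1\leq d'\leq 2d-c$, a contradiction. Hence $2d-c+1\notin\Lambda$, which is $B$.

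I do not expect a serious obstacle here; the argument is essentially a matter of placing the integer $2d-c+1$ correctly relative to the final two intervals of non-gaps, $[c',d]$ and the one ending at $d'$. The one point requiring care is the boundary behaviour: the case $c+c'-2=2d$ gives exactly $2d-c+1=c'-1$, a gap, so it is swallowed by $B$ rather than by $A_2$, and the strict inequality $2d-c+1\leq c'-2$ obtained from $\neg A_2$ over the integers is precisely what makes the final contradiction go through.
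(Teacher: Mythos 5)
Your proposal is correct and is essentially the paper's own argument: both directions rest on the same two facts ($d'\leq c'-2$ since $c'-1$ is a gap, and the maximality of $d'$ among non-gaps below $c'$), with your unconditional $A_2\Rightarrow A_1$ matching the paper's second direction verbatim and your $A_1\wedge\neg A_2\Rightarrow B$ being just the contrapositive rearrangement of the paper's $A_1\wedge\neg B\Rightarrow A_2$. No gaps; the boundary discussion at $c+c'-2=2d$ is harmless but unnecessary, since in that case $A_2$ itself already holds.
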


\begin{proof}
Let us see first that (1) implies (2).
If $2d-c+1\not\in\Lambda$ then it is obvious.
Otherwise the condition $c-d\leq d-d'$ is equivalent to
$d'\leq 2d-c$ 
which, together with 
$2d-c+1\in\Lambda$ implies 
$c'\leq 2d-c+1$ by definition of $c'$.
This in turn implies that $c+c'-2<c+c'-1\leq 2d$.

To see that (1) is a consequence of (2)
notice that by definition, $d'\leq c'-2$. Then,
if $c+c'-2\leq 2d$, we have
$d-d'\geq d-c'+2\geq c-d$.
\end{proof}

From all these results one concludes the next theorem.

\begin{theorem}
\begin{enumerate}
\item
For
{\em any} numerical semigroup 
$m\leq\min\{c+c'-2-g,2d-g\}.$
\item $m=\min\{c+c'-2-g,2d-g\}$
if and only if the corresponding numerical semigroup is near-acute.
\end{enumerate}
\end{theorem}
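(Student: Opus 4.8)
The plan is to \emph{assemble} rather than re-derive: the theorem combines the two external results quoted just above (the general inequality of Munuera--Torres and Oneto--Tamone, and Oneto--Tamone's exact characterization of equality) with the equivalence established in the previous lemma. Recall that here $m$ denotes the smallest integer for which $d_{ORD}(C_i)=\nu_{i+1}$ for all $i\geq m$, equivalently $m=\max\{i:\nu_i>\nu_{i+1}\}$, exactly as in Corollary~\ref{corollary:dFR-acute}.

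For part~(1) there is nothing to prove beyond citing \cite{MuTo2008,OnTa2008}: the bound $m\leq\min\{c+c'-2-g,\,2d-g\}$ is precisely their statement. It is worth recording why the two shapes of $m$ agree in the acute case, so that the formula of Theorem~\ref{theorem: ultim punt decreixement per acute semigroups} is seen to be the special case of equality: since $c'\geq 2$ we have $c+c'-2\geq c$, and since $d\geq c'$ together with $c'+d\geq c$ (Remark~\ref{remark: c'+d geq c}) we have $2d\geq c$; hence both $c+c'-2$ and $2d$ are non-gaps, so $\lambda^{-1}(c+c'-2)=c+c'-2-g$ and $\lambda^{-1}(2d)=2d-g$.

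For part~(2) I would chain three equivalences. By Oneto and Tamone's result, $m=\min\{c+c'-2-g,\,2d-g\}$ holds if and only if $c+c'-2\leq 2d$ or $2d-c+1\not\in\Lambda$. By the previous lemma this disjunction is equivalent to $c-d\leq d-d'$ or $2d-c+1\not\in\Lambda$. Finally, this last disjunction is, by definition, exactly the condition that $\Lambda$ be near-acute. Composing the three gives $m=\min\{c+c'-2-g,\,2d-g\}$ if and only if $\Lambda$ is near-acute, as required.

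The genuine difficulty is entirely hidden inside part~(1) and the forward direction of Oneto--Tamone, both of which I am treating as given. The clean monotonicity of $\#D(j)$ driving Theorem~\ref{theorem: ultim punt decreixement per acute semigroups} uses acuteness to guarantee that the $l$ integers immediately below $c'$ are gaps; for a general semigroup this fails, the last interval of gaps before $c$ is no longer controlled, and $\nu$ can dip again after $\min\{c+c'-2-g,\,2d-g\}$, making $m$ strictly smaller. The near-acute hypothesis is precisely what rules out such a late dip, and reproving the sharpened bound from scratch would mean redoing the $D(j)$ bookkeeping under that weaker hypothesis, which is where all the work lies.
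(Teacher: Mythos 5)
Your proposal is correct and takes essentially the same route as the paper, which likewise obtains this theorem purely by assembling the cited Munuera--Torres/Oneto--Tamone inequality, Oneto--Tamone's characterization of equality, and the preceding lemma identifying that characterization with the near-acute condition (the paper's entire justification is ``From all these results one concludes the next theorem''). Your added consistency check that $\lambda^{-1}(c+c'-2)=c+c'-2-g$ and $\lambda^{-1}(2d)=2d-g$ in the acute case is correct but not required; the paper leaves it implicit.
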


In \cite{OnTa2009} Oneto and Tamone give further results on $m$ and 
in \cite{OnTa2010} the same authors conjecture that
for any numerical semigroup,
$$\lambda_m\geq c+d-\lambda_1.$$

\subsubsection{The $\nu$ sequence and Feng-Rao improved codes}

The one-point codes whose set $W$ of parity checks is selected 
so that the orders outside $W$
satisfy the hypothesis of Lemma~\ref{lemma: condicio mes feble} 
and $W$ is minimal with this property are called Feng-Rao improved codes.
They were defined in \cite{FeRa:improved,HoLiPe:agc}.

\begin{definition}
Given a rational point $P$ of an
algebraic smooth curve ${\mathcal X}_F$ defined over ${\mathbb F}_q$
with Weierstrass semigroup $\Lambda$ and sequence $\nu$
with associated basis $z_0,z_1,\dots$
and given $n$ other different points $P_1,\dots,P_n$ of ${\mathcal X}_F$,
the associated \alert{Feng-Rao improved code}
guaranteeing correction of $t$ errors
is defined as $$C_{\tilde R(t)}=<(z_i(P_1),\dots,z_i(P_n)):i\in \tilde R(t)>^\perp,$$
where $$\tilde R(t)=\{i\in{\mathbb N}_0:\nu_i<2t+1\}.$$
\end{definition}

\subsubsection{On the improvement of the Feng-Rao improved codes}
\label{section: nu no decreasing implica ordinary}

Feng-Rao improved codes will actually give an improvement with respect to
classical codes only if $\nu_i$ is decreasing at some $i$.
We next study this condition.

\begin{lemma}
\label{lemma: nui ordinary}
If $\Lambda$ is an ordinary numerical semigroup with enumeration $\lambda$
then
$$\nu_i=\trescasos{1}{if $i=0$,}{2}{if $1\leq i\leq \lambda_1$,}
{i-\lambda_1+2}{if $i>\lambda_1$.}$$
\bigskip
\end{lemma}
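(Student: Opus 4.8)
The plan is to compute the sets $N_i=\{j\in{\mathbb N}_0:\lambda_i-\lambda_j\in\Lambda\}$ directly from the explicit shape of an ordinary semigroup, and then read off $\nu_i=\#N_i$. Write $c=\lambda_1$ for the conductor; since $\Lambda=\{0\}\cup\{i\in{\mathbb N}_0:i\geq c\}$ we have $\lambda_0=0$ and $\lambda_k=c+k-1$ for every $k\geq 1$. Because $\pleq_\Lambda$ is compatible with the natural order of ${\mathbb N}_0$, membership $j\in N_i$ forces $\lambda_j\leq\lambda_i$ and hence $j\leq i$, so it suffices to examine indices $0\leq j\leq i$.

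First I would dispose of the two indices that always lie in $N_i$: $j=0$ works because $\lambda_i-\lambda_0=\lambda_i\in\Lambda$, and $j=i$ works because $\lambda_i-\lambda_i=0\in\Lambda$. For the remaining indices $0<j<i$ the key observation is that both $i$ and $j$ are at least $1$, so $\lambda_i-\lambda_j=(c+i-1)-(c+j-1)=i-j$. Thus $j\in N_i$ if and only if $i-j\in\Lambda$; since $0<j<i$ gives $0<i-j<i$, this happens exactly when $i-j\geq c=\lambda_1$, i.e. when $j\leq i-\lambda_1$.

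It then remains to count in each of the three ranges. For $i=0$ only $j=0$ survives, giving $\nu_0=1$. For $1\leq i\leq\lambda_1$ the condition $0<j\leq i-\lambda_1$ is vacuous (as $i-\lambda_1\leq 0$), so $N_i=\{0,i\}$ with $0\neq i$ and $\nu_i=2$. For $i>\lambda_1$ the intermediate indices satisfying the condition are exactly $j\in\{1,\dots,i-\lambda_1\}$, which are $i-\lambda_1$ values, all strictly between $0$ and $i$; adding back $j=0$ and $j=i$ yields $\nu_i=(i-\lambda_1)+2$, as claimed.

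There is no genuine obstacle here: the argument is a direct count. The only point demanding care is the boundary index $i=\lambda_1$, where one must verify that no intermediate $j$ contributes (indeed $i-\lambda_1=0$), so that $\nu_{\lambda_1}=2$; this is consistent, since the third-case expression $i-\lambda_1+2$ also evaluates to $2$ at $i=\lambda_1$, making the piecewise formula agree at the transition. An alternative route would invoke Lemma~\ref{lemma:nu} with $g=c-1$ together with a direct evaluation of $\#D(i)$; this works too, but the description of $D(i)$ near the conductor is slightly more delicate, so computing $N_i$ directly is preferable.
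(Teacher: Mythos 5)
Your proof is correct and takes essentially the same approach as the paper's: both determine $N_i$ explicitly for an ordinary semigroup ($0$ and $i$ always lie in $N_i$, and an intermediate $j$ lies in $N_i$ exactly when $\lambda_i-\lambda_j\geq\lambda_1$) and then count, the only difference being that you count indices directly via the telescoping identity $\lambda_i-\lambda_j=i-j$, whereas the paper counts non-gaps up to $\lambda_i-\lambda_1$ and converts to an index count using the genus. One harmless inaccuracy: for the trivial semigroup ${\mathbb N}_0$, which is ordinary, the conductor is $0$ rather than $\lambda_1=1$, but your argument is unaffected since it only uses $\Lambda=\{0\}\cup\{k\in{\mathbb N}_0:k\geq\lambda_1\}$ and $\lambda_k=\lambda_1+k-1$ for $k\geq 1$, both of which still hold in that case.
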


\begin{proof}It is obvious that $\nu_0=1$
and that $\nu_i=2$ whenever
$0<\lambda_i<2\lambda_1$.
So, since $2\lambda_1=\lambda_{\lambda_1+1}$,
we have that $\nu_i=2$ for all $1\leq i\leq \lambda_1$.
Finally, if $\lambda_i\geq 2\lambda_1$ then
all non-gaps up to $\lambda_i-\lambda_1$
are in $N_i$ as well as $\lambda_i$, and
none of the remaining non-gaps are in $N_i$.
Now, if the genus of $\Lambda$ is $g$, then
$\nu_i=\lambda_i-\lambda_1+2-g$
and
$\lambda_i=i+g$.
So,
$\nu_i=i-\lambda_1+2$.
\end{proof}

As a consequence of
Lemma~\ref{lemma: nui ordinary},
the $\nu$ sequence is non-decreasing if $\Lambda$
is an ordinary numerical semigroup.
We will see in this section that
ordinary numerical semigroups are in fact the only semigroups for
which the $\nu$ sequence  is non-decreasing.

\begin{lemma}
\label{lemma: nu non-decreasing implica Arf}
Suppose that for the semigroup $\Lambda$ the $\nu$ sequence
is non-de\-crea\-sing. Then $\Lambda$ is Arf.
\end{lemma}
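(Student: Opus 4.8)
The plan is to prove the contrapositive: if $\Lambda$ is not Arf, then its $\nu$ sequence has a strict descent $\nu_{p-1}>\nu_p$ at some index $p$, so it cannot be non-decreasing. To detect non-Arfness I would use the characterization in Lemma~\ref{lemma: charactarf 2j-i}: $\Lambda$ fails to be Arf exactly when $2\lambda_i-\lambda_j\notin\Lambda$ for some positive integers $i\geq j$. Choosing the \emph{least} index $i$ admitting such a failure makes the semigroup ``Arf below $i$'' (for every $a<i$ and every $b\leq a$ one has $2\lambda_a-\lambda_b\in\Lambda$), and this minimality is what will let me control the relevant $N$-sets.

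Fix that minimal $i$, note that necessarily $j<i$ (because $2\lambda_i-\lambda_i=\lambda_i\in\Lambda$) and that $2\lambda_i-\lambda_j$, being a gap, is smaller than the conductor. Let $F=\#\{a:0\leq a\leq i,\ 2\lambda_i-\lambda_a\notin\Lambda\}\geq 1$ count the failures at $i$, and put $p=\lambda^{-1}(2\lambda_i)$. The first key step is to compute $\nu_p$ exactly. The map $a\mapsto\lambda^{-1}(2\lambda_i-\lambda_a)$ is an involution of $N_p=\{a:2\lambda_i-\lambda_a\in\Lambda\}$ whose only fixed point is $i$ (since $\lambda_a=2\lambda_i-\lambda_a$ forces $\lambda_a=\lambda_i$). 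Among the indices $a<i$ exactly $i-F$ lie in $N_p$, each pairing with a distinct index above $i$, so $\nu_p=2(i-F)+1\leq 2i-1$.

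The second key step, and the main obstacle, is the lower bound $\nu_{p-1}\geq 2i$ for the non-gap index immediately preceding $p$; together with $\nu_p\leq 2i-1$ this yields the desired descent $\nu_{p-1}>\nu_p$ and the contradiction. Writing $\lambda_{p-1}=2\lambda_i-\delta$ with $\delta\geq 1$ (all of $2\lambda_i-1,\dots,2\lambda_i-\delta+1$ being gaps), the difficulty is that the pairing on $N_{p-1}$ is governed by the sum $2\lambda_i-\delta$ rather than by $2\lambda_i$, so the clean involution argument no longer applies directly. I expect to establish $\nu_{p-1}\geq 2i$ by producing $2i$ explicit members of $N_{p-1}$ out of the indices $0,\dots,i-1$ and their partners under $a\mapsto\lambda^{-1}(\lambda_{p-1}-\lambda_a)$, using the ``Arf below $i$'' property to guarantee that the required differences $\lambda_{p-1}-\lambda_a$ stay in $\Lambda$; a short case split according to the parity and size of $\delta$ (and whether $\lambda_i-\delta\in\Lambda$) is likely unavoidable. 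Equivalently one may run the bookkeeping through Lemma~\ref{lemma:nu}, reducing the claim to the inequality $\#D(p)-\#D(p-1)\leq\delta-3$ on the gap-pair counts, which again is exactly where the minimality of $i$ does the real work.
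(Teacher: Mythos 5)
Your first step is sound: with $i$ the minimal index admitting a failure $2\lambda_i-\lambda_j\notin\Lambda$ and $p=\lambda^{-1}(2\lambda_i)$, the involution $a\mapsto\lambda^{-1}(2\lambda_i-\lambda_a)$ on $N_p$ does give $\nu_p=2(i-F)+1\leq 2i-1$; essentially the same count appears inside the paper's own induction. The gap is exactly where you flag it, in the second step, and it cannot be filled, because the inequality $\nu_{p-1}\geq 2i$ is false. For $\Lambda=\langle 6,7,8,17\rangle$ (one of the paper's running examples, see Table~\ref{table}) the minimal failure is at $i=3$ (since $2\lambda_3-\lambda_1=16-6=10\notin\Lambda$), so $p=\lambda^{-1}(16)=8$, yet $\nu_{p-1}=\nu_7=4<6=2i$. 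Worse, the conclusion the inequality was meant to deliver --- a strict descent precisely at $p$ --- is itself false in general. Take $\Lambda=\langle 5,7,13\rangle=\{0,5,7,10,12,13,14,15,17,18,\dots\}$: the minimal failure is at $i=2$ (as $2\lambda_2-\lambda_1=9\notin\Lambda$), so $p=\lambda^{-1}(14)=6$ and $\nu_p=3$; but $\lambda_{p-1}=13$ is a generator, so $\nu_{p-1}=2<\nu_p$, i.e.\ there is an \emph{ascent} at $p$. The descent forced by non-Arfness sits earlier: $\nu_4=4>\nu_5=2$, between $\lambda_1+\lambda_2=12$ and the generator $13$. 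So no case split on $\delta$ can rescue the plan; $p$ is simply not where the descent must occur.

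The structural reason is that minimality only gives you the doubled condition $2\lambda_a-\lambda_b\in\Lambda$ for $a<i$, $b\leq a$, and this says nothing about the differences $\lambda_{p-1}-\lambda_a$ that govern $N_{p-1}$: these involve either $\lambda_i$ itself (when $\lambda_{p-1}=\lambda_{i-1}+\lambda_i$, as in the first counterexample, where $15-6=9\notin\Lambda$) or a fresh generator lying strictly between $\lambda_{i-1}+\lambda_i$ and $2\lambda_i$ (as in the second). This is precisely why the paper does not argue by contrapositive at all: assuming $\nu$ non-decreasing \emph{globally}, it proves by induction on $i$ that both $N_{\lambda^{-1}(2\lambda_i)}$ and $N_{\lambda^{-1}(\lambda_i+\lambda_{i+1})}$ are full, invoking the monotonicity hypothesis at every stage (first between $\lambda_{i-1}+\lambda_i$ and $2\lambda_i$, then between $2\lambda_i$ and $\lambda_i+\lambda_{i+1}$), and only then concludes via Lemma~\ref{lemma: charactarf 2j-i}. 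The hypothesis is used infinitely many times to propagate fullness upward; your plan uses it once, at a single minimal failure, and the counterexamples show that a single failure does not localize a descent. A repaired contrapositive would have to show the descent occurs \emph{somewhere} in the window between $\lambda^{-1}(\lambda_{i-1}+\lambda_i)$ and $p$, but establishing the required high value of $\nu$ at the start of that window unconditionally already amounts to the inductive argument the paper runs.
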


\begin{proof}
Let $\lambda$ be the enumeration of $\Lambda$.
Let us see by induction that, for any non-negative integer $i$,
\begin{description}
\item[{\bf (i)}]
$N_{\lambda^{-1}(2\lambda_i)}=
  \{j\in{\mathbb N}_0: j\leq i\}\sqcup\{\lambda^{-1}(2\lambda_i-\lambda_j): 0\leq j<i\}$,
where $\sqcup$ means the union
of disjoint sets.
\item[{\bf (ii)}]
$N_{\lambda^{-1}(\lambda_i+\lambda_{i+1})}=
\{j\in{\mathbb N}_0: j\leq i\}\sqcup\{\lambda^{-1}(\lambda_i+\lambda_{i+1}-\lambda_j): 0\leq j\leq i\}$.
\end{description}
Notice that if {\bf (i)}
is satisfied for all $i$,
then
$\{j\in{\mathbb N}_0: j\leq i\}\subseteq N_{\lambda^{-1}(2\lambda_i)}$
for all $i$, and hence
by
Lemma~\ref{lemma: charactarf 2j-i}
$\Lambda$ is Arf.

It is obvious
that both {\bf (i)} and {\bf (ii)}
are satisfied
for the case $i=0$.

Suppose $i>0$.
By the induction hypothesis,
$\nu_{\lambda^{-1}(\lambda_{i-1}+\lambda_i)}=2i$.
Now, since $(\nu_i)$ is not decreasing
and $2\lambda_i>\lambda_{i-1}+\lambda_i$,
we have
$\nu_{\lambda^{-1}(2\lambda_i)}\geq 2i$.
On the other hand,
if $j,k\in{\mathbb N}_0$ are such that $j\leq k$ and $\lambda_j+\lambda_k=2\lambda_i$
then $\lambda_j\leq\lambda_i$ and $\lambda_k\geq \lambda_i$.
So,
$\lambda(N_{\lambda^{-1}(2\lambda_i)})\subseteq
\{\lambda_j: 0\leq j\leq i\}\sqcup\{2\lambda_i-\lambda_j: 0\leq j<i\}$
and hence
$\nu_{\lambda^{-1}(2\lambda_i)}\geq 2i$ if and
only if
$N_{\lambda^{-1}(2\lambda_i)}=
\{j\in{\mathbb N}_0: j\leq i\}\sqcup\{\lambda^{-1}(2\lambda_i-\lambda_j): 0\leq j<i\}$.
This proves {\bf (i)}.

Finally, {\bf (i)} implies
$\nu_{\lambda^{-1}(2\lambda_i)}=2i+1$
and {\bf (ii)} follows by an analogous argumentation.
\end{proof}

\begin{theorem}
\label{theorem: non-decreasing nu implica ordinary}
The only numerical semigroups for which the $\nu$ sequence
is non-de\-crea\-sing are ordinary numerical semigroups.
\end{theorem}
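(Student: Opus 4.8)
The plan is to prove the implicit equivalence between being ordinary and having a non-decreasing $\nu$ sequence. One direction, namely that ordinary semigroups have non-decreasing $\nu$, has already been recorded as a consequence of Lemma~\ref{lemma: nui ordinary}, so the only content left is the implication ``$(\nu_i)$ non-decreasing $\Rightarrow$ $\Lambda$ ordinary''. I would therefore assume that $(\nu_i)$ is non-decreasing and argue by contradiction: if $\Lambda$ were not ordinary, the sequence would have to drop somewhere.

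First I would invoke Lemma~\ref{lemma: nu non-decreasing implica Arf} to deduce at once that $\Lambda$ is Arf. Then, by the third item of Lemma~\ref{lemma: exemples acute}, every Arf semigroup is acute, so $\Lambda$ is acute. These two reductions cost nothing beyond citing the earlier results; their purpose is simply to place $\Lambda$ inside the class of acute semigroups, for which the behaviour of the $\nu$ sequence is fully understood.

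Next I would suppose, for contradiction, that $\Lambda$ is non-ordinary. Then $\Lambda$ is a non-ordinary acute semigroup, and Theorem~\ref{theorem: ultim punt decreixement per acute semigroups} applies verbatim, with $m=\min\{\lambda^{-1}(c+c'-2),\lambda^{-1}(2d)\}$. Its first conclusion asserts $\nu_m>\nu_{m+1}$, a strict decrease, which flatly contradicts the assumption that $(\nu_i)$ is non-decreasing. Hence $\Lambda$ must be ordinary, which is exactly what we wanted.

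This argument carries no genuine computational burden, since all the heavy lifting is already packaged in the cited statements; the work is purely organisational. The one point that deserves care, and which I regard as the only real obstacle, is making sure the chain of reductions (non-decreasing $\Rightarrow$ Arf $\Rightarrow$ acute) leaves us strictly inside the hypotheses of Theorem~\ref{theorem: ultim punt decreixement per acute semigroups}, which is stated only for \emph{non-ordinary} acute semigroups and which exhibits the drop at the specific, well-defined index $m$. Once that is checked, I would close by combining this implication with the forward direction coming from Lemma~\ref{lemma: nui ordinary} to state the full characterization: $(\nu_i)$ is non-decreasing if and only if $\Lambda$ is ordinary.
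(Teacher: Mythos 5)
Your proposal is correct and follows exactly the same route as the paper's proof, which cites precisely the same chain: Lemma~\ref{lemma: nu non-decreasing implica Arf} (non-decreasing $\Rightarrow$ Arf), Lemma~\ref{lemma: exemples acute} (Arf $\Rightarrow$ acute), Theorem~\ref{theorem: ultim punt decreixement per acute semigroups} (a strict drop $\nu_m>\nu_{m+1}$ for non-ordinary acute semigroups, giving the contradiction), and Lemma~\ref{lemma: nui ordinary} for the converse direction. The only difference is that you spell out the contradiction argument that the paper leaves implicit.
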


\begin{proof}
It is a consequence of Lemma~\ref{lemma: nu non-decreasing implica Arf},
Lemma~\ref{lemma: exemples acute},
Theorem~\ref{theorem: ultim punt decreixement per acute semigroups}
and Lemma~\ref{lemma: nui ordinary}.
\end{proof}

\begin{corollary}\label{corollary: nu increasing implica semigrup trivial}
The only numerical semigroup for which
the $\nu$ sequence 
is strictly increasing is the trivial numerical semigroup.
\end{corollary}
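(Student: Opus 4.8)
The plan is to obtain this statement as an immediate consequence of Theorem~\ref{theorem: non-decreasing nu implica ordinary}. A strictly increasing sequence is in particular non-decreasing, so that theorem forces any semigroup with strictly increasing $\nu$ to be ordinary. This reduces the entire problem to a single question: among the ordinary numerical semigroups, which ones have a $\nu$ sequence that is not merely non-decreasing but strictly increasing?

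To settle this I would appeal to the explicit description of $\nu$ for ordinary semigroups given in Lemma~\ref{lemma: nui ordinary}. That formula exhibits a flat middle piece, $\nu_i=2$ for all $1\leq i\leq\lambda_1$. As soon as $\lambda_1\geq 2$, both indices $i=1$ and $i=2$ fall in this range, so $\nu_1=\nu_2=2$, and a repeated value destroys strict monotonicity. Hence strict increase forces $\lambda_1=1$; but $1\in\Lambda$ means $\Lambda={\mathbb N}_0$, the trivial semigroup. For the converse direction I would just recall Example~\ref{example:nutrivial}, where the $\nu$ sequence of ${\mathbb N}_0$ is $1,2,3,4,\dots$, which is visibly strictly increasing, so the trivial semigroup does satisfy the property.

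I do not expect any genuine obstacle here, since all the substantive work is carried by the earlier results. The only point requiring a moment of care is the boundary behaviour of the piecewise formula in Lemma~\ref{lemma: nui ordinary}: the constant block $\nu_i=2$ collapses to the single index $i=1$ precisely when $\lambda_1=1$, and this is exactly the unique case in which the sequence jumps $1,2,3,\dots$ without repetition. Making that observation explicit closes the argument.
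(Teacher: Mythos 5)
Your proof is correct and follows exactly the paper's route: the paper proves this corollary by citing Theorem~\ref{theorem: non-decreasing nu implica ordinary} together with Lemma~\ref{lemma: nui ordinary}, and your argument simply spells out how those two results combine (strict increase forces ordinariness, and the flat block $\nu_i=2$ for $1\leq i\leq\lambda_1$ then forces $\lambda_1=1$). Nothing is missing; you have merely made explicit what the paper leaves implicit.
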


\begin{proof}
It is a consequence of
Theorem~\ref{theorem: non-decreasing nu implica ordinary}
and Lemma~\ref{lemma: nui ordinary}.
\end{proof}

As a consequence of 
Theorem~\ref{theorem: non-decreasing nu implica ordinary}
we can show that the only numerical semigroups
for which the associated classical codes
are not improved 
by the Feng-Rao improved codes,
at least for one value of $t$, are
ordinary semigroups.

\begin{corollary}
Given a numerical semigroup $\Lambda$ define
$m(\delta)=\max\{i\in{\mathbb N}_0: \nu_i<\delta\}$.
There exists at least one value of $\delta$
for which $\{i\in{\mathbb N}_0:\nu_i<\delta\}\subsetneq\{i\in{\mathbb N}_0: i\leq m(\delta)\}$
if and only if $\Lambda$ is non-ordinary.
\end{corollary}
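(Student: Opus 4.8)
The plan is to reduce the statement to the monotonicity of the $\nu$ sequence and then invoke Theorem~\ref{theorem: non-decreasing nu implica ordinary}. First I would record the elementary observation that the inclusion $\{i\in{\mathbb N}_0:\nu_i<\delta\}\subseteq\{i\in{\mathbb N}_0: i\leq m(\delta)\}$ holds for \emph{every} $\delta$, simply because $m(\delta)$ is by definition the largest index in the left-hand set. Thus the content of the corollary is exactly to decide when this automatic inclusion can be made strict for some $\delta$, i.e.\ when there is an index $j\leq m(\delta)$ with $\nu_j\geq\delta$ while some larger index already has its $\nu$-value below $\delta$.

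Next I would prove the key equivalence: there exists a $\delta$ giving strict inclusion if and only if the sequence $(\nu_i)$ fails to be non-decreasing. For the direction assuming failure of monotonicity, suppose $\nu_k>\nu_{k+1}$ for some $k$ and set $\delta=\nu_k$. Since $\nu_{k+1}<\delta$ we have $m(\delta)\geq k+1>k$, yet $\nu_k=\delta\not<\delta$ shows $k\notin\{i:\nu_i<\delta\}$; as $k\leq m(\delta)$ this gives the strict inclusion $\{i:\nu_i<\delta\}\subsetneq\{i: i\leq m(\delta)\}$. For the converse (contrapositive), if $(\nu_i)$ is non-decreasing then for any admissible $\delta$ and any $i\leq m(\delta)$ one has $\nu_i\leq\nu_{m(\delta)}<\delta$, so $\{i: i\leq m(\delta)\}\subseteq\{i:\nu_i<\delta\}$; combined with the automatic reverse inclusion, equality holds for every $\delta$ and no value of $\delta$ improves.

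Finally I would combine this with Theorem~\ref{theorem: non-decreasing nu implica ordinary} (together with Lemma~\ref{lemma: nui ordinary} for the ordinary side), which says that $(\nu_i)$ is non-decreasing precisely when $\Lambda$ is ordinary. Hence $(\nu_i)$ fails to be non-decreasing exactly when $\Lambda$ is non-ordinary, and by the equivalence of the previous paragraph this is exactly the condition for a strictly improving $\delta$ to exist, proving the corollary. I do not expect any genuine obstacle: the only points needing care are that $m(\delta)$ is well defined — the set $\{i:\nu_i<\delta\}$ is finite since $\nu_{i+1}=\nu_i+1$ for all large $i$, and it is nonempty for the chosen $\delta=\nu_k\geq 2$ because $\nu_0=1$ — and the routine verification that the automatic inclusion lets us recast the set-equality question as a pure monotonicity question before handing it off to the theorem.
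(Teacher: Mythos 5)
Your proof is correct and takes essentially the same route the paper intends: the paper states this corollary without a written proof, as an immediate consequence of Theorem~\ref{theorem: non-decreasing nu implica ordinary}, and your argument supplies exactly the implicit routine step — translating the existence of a strictly improving $\delta$ into a failure of monotonicity of $(\nu_i)$ — before invoking that theorem (together with Lemma~\ref{lemma: nui ordinary} for the ordinary direction). Your care about well-definedness of $m(\delta)$ (finiteness and nonemptiness for $\delta=\nu_k\geq 2$) is a welcome detail the paper glosses over, but it is not a departure in approach.
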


\subsection{Generic errors and the $\tau$ sequence}

All the results in these sections are based on \cite{BrOS:2006_AAECC,BrOS:duality,Bras:2009_DCC}.
Correction of generic errors has already been considered in \cite{Pellikaan92,O'Sullivan:hermite-beyond,JensenNielsenHoholdt}.

\subsubsection{Generic errors}

\begin{definition}
The points $P_{i_1},\dots,P_{i_t}$ ($P_{i_j}\neq P$) 
are \alert{generically distributed} if no non-zero function
generated by $z_0,\dots,z_{t-1}$ vanishes in all of them.
In the context of one-point codes, \alert{generic errors} are those 
errors whose non-zero positions correspond to generically distributed 
points. Equivalently, $e$ is generic if and only if $\Delta_e=\Delta_t:=\{0,\dots,t-1\}.$
\end{definition}

Generic errors of weight $t$ can be a very large proportion of all possible errors of weight $t$ \cite{Hansen}.
Thus, by restricting the errors to be corrected to generic errors 
the decoding requirements become weaker and we are still able to correct almost all errors.
In some of these references generic errors are called \alert{independent errors}.

\begin{example}[Generic sets of points in ${\mathcal H}_q$]
Recall that the Hermitian curve 
from Example~\ref{example:hermite}.
It is defined over ${\mathbb F}_{q^2}$ and its 
affine equation is $x^{q+1}=y^q+y$.

The unique point at infinity is $P_\infty=(0:1:0)$.
If $b\in{\mathbb F}_q$ then $b^q+b=Tr(b)=0$ and the unique affine
point with $y=b$ is $(0,b)$. There are a total of $q$ such points.
If $b\in{\mathbb F}_{q^2}\setminus{\mathbb F}_q$ then $b^q+b=Tr(b)\in{\mathbb F}_q\setminus\{0\}$ and there are $q+1$ solutions of $x^{q+1}=b^q+b$, so, there are 
$q+1$ different affine
points with $y=b$. There are a total of $(q^2-q)(q+1)$ such points.
The total number of affine points is then 
$q+(q^2-q)(q+1)=q^3$.

If we distinguish the point $P_\infty$, 
we can take $z_0=1, z_1=x$, $z_2=y$,
$z_3=x^2$, $z_4=xy$,
$z_5=y^2$\dots

Non-generic sets of two points are pairs of points satisfying 
$x^{q+1}=y^q+y$ and simultaneously vanishing at $f=z_1+a z_0=x+a$ for some $a\in{\mathbb F}_{q^2}$.
The expression $x+a$ represents a line with $q$ points.
There are $q^2$ such lines.
There are a total of $q^2\binom{q}{2}$ 
pairs of colinear points over lines of the form $x+a$ and so 
$q^2\binom{q}{2}$ non-generic errors.

Consequently, the portion of non-generic errors of weight $2$ is $\frac{q^2\binom{q}{2}}{\binom{q^3}{2}}
=\frac{1}{q^2+q+1}$.

A set of three points is non-generic if the points satisfy  
$x^{q+1}=y^q+y$ and simultaneously vanish at 
$f=z_1+a z_0=x+a$ for some $a\in{\mathbb F}_{q^2}$ or at
$f=z_2+a z_1+b z_0=y+ax+b$ for some $a,b\in{\mathbb F}_{q^2}$.

The expression $x+a$ represents a line (which we call of type 1)
with $q$ points. There are $q^2$ lines of type 1.

The line $y+ax+b$ is called of type 2 if $a^{q+1}= b^q+b$
and of type 3 otherwise.
There are $q^3$ lines of type 2 and $q^4-q^3$ lines of type 3.

Lines of type 2 have only one point. Indeed,
a point on ${\mathcal H}_q$ 
and on the line
 $y+ax+b$ 
must satisfy $x^{q+1}=(-ax-b)^q+(-ax-b)=
-(ax)^q-ax-a^{q+1}$.
Notice that $(x+a^q)^{q+1}=(x+a^q)^q(x+a^q)=(x^q+a)(x+a^q)=x^{q+1}+x^qa^q+ax+a^{q+1}$.
So, $x=-a^q$ is the unique solution to $x^{q+1}=-(ax)^q-ax-a^{q+1}$
and so the unique point of 
${\mathcal H}_q$ on the line 
 $y+ax+b$ is $(-a^q,a^{q+1}-b)$.



Lines of type 3 have $q+1$ points.
This follows by a counting argument. 
On one hand, as seen before,
a point on ${\mathcal H}_q$ 
and on the line
 $y+ax+b$ 
must satisfy $x^{q+1}=-(ax)^q-ax-b^q-b$.
There are at most $q+1$ different values of $x$ satisfying this equation 
and so at most $q+1$ different points of 
${\mathcal H}_q$ on the line
$y+ax+b$. 
On the other hand
there are a total of $\binom{q^3}{2}$ 
pairs of affine points. Each pair meets only in one line.
The number of pairs sharing lines of type 1 is $q^2\binom{q}{2}$, 
the number of pairs sharing lines of type 2 is $0$ and
the number of pairs sharing lines of type 3 is at most $q^3(q-1)\binom{q+1}{2}$, with equality only if all lines of type 3 have $q+1$ points.
Since $q^2\binom{q}{2}+q^3(q-1)\binom{q+1}{2}=\binom{q^3}{2}$,
we deduce that all the lines of type 3 must have $q+1$ points.

In total there are $q^2\binom{q}{3}$ 
sets of three points 
sharing a line of type 1 and 
$(q^4-q^3)\binom{q+1}{3}$
sets of three points 
sharing a line of type 3.

The portion of non-generic errors of weight $3$ is then  
$\frac{q^2\binom{q}{3}+q^3(q-1)\binom{q+1}{3}}{\binom{q^3}{3}}=
\frac{1}{q^2+q+1}$.
\end{example}

\subsubsection{Conditions for correcting generic errors}

In the next lemma we find conditions guaranteeing the majority voting step 
for generic errors.
It is a reformulation of results that appeared in 
\cite{O'Sullivan:manuscript,BrOS:2006_AAECC,Bras:2009_DCC}

\begin{lemma}
\label{lemma:condicionspercalcularsindromes_generic}
Let $\Sigma_t={\mathbb N}_0\setminus\Delta_t=t+{\mathbb N}_0$.
The following conditions are equivalent.
\begin{enumerate}
\item $\nu_k>2\#(N_k\cap\Delta_t)$,
\item $k\in\Sigma_t\oplus\Sigma_t$,
\item $\tau_k\geq t$.
\end{enumerate}
\end{lemma}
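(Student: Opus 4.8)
The plan is to prove $(2)\Leftrightarrow(3)$ directly from the definitions, and then $(1)\Leftrightarrow(3)$ by exploiting a symmetry of the set $N_k$. Throughout I use that $\Delta_t=\{0,\dots,t-1\}$ and $\Sigma_t=\{t,t+1,\dots\}$ partition ${\mathbb N}_0$, and the observation following the definition of the $\tau$ sequence, namely that $\tau_k$ is the largest $j\in N_k$ with $\lambda_j\leq\lambda_k/2$.

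First I would dispose of $(2)\Leftrightarrow(3)$. If $k\in\Sigma_t\oplus\Sigma_t$, pick $p\leq q$ in $\Sigma_t$ with $p\oplus q=k$; then $\lambda_p\leq\lambda_k/2$ and $\lambda_k-\lambda_p=\lambda_q\in\Lambda$, so $p\in N_k$, whence $\tau_k\geq p\geq t$. Conversely, if $\tau_k\geq t$, set $j=\tau_k$ and $m=k\ominus j$; from $\lambda_j\leq\lambda_k/2$ we get $\lambda_m=\lambda_k-\lambda_j\geq\lambda_j$, so $m\geq j\geq t$, and $j\oplus m=k$ exhibits $k\in\Sigma_t\oplus\Sigma_t$. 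This part is routine.

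The core of the argument is $(1)\Leftrightarrow(3)$, for which I would study the map $\iota\colon N_k\to N_k$, $\iota(j)=k\ominus j$. It is well defined because $\lambda_k-\lambda_{k\ominus j}=\lambda_j\in\Lambda$, it is an involution, and since $\oplus$ is compatible with the order of ${\mathbb N}_0$ it is order-reversing. Writing $N_k=\{j_0<j_1<\dots<j_{\nu_k-1}\}$, the involution therefore acts by $j_s\mapsto j_{\nu_k-1-s}$. Now $\lambda_{j_s}\leq\lambda_k/2$ is equivalent to $j_s\leq\iota(j_s)=j_{\nu_k-1-s}$, i.e. to $s\leq(\nu_k-1)/2$, so the largest such index is $\lfloor(\nu_k-1)/2\rfloor$ and hence $\tau_k=j_{\lfloor(\nu_k-1)/2\rfloor}$ is the median element of $N_k$.

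To finish, let $a=\#(N_k\cap\Delta_t)$ be the number of $j\in N_k$ with $j<t$; these are exactly $j_0,\dots,j_{a-1}$, so $j_s\geq t$ if and only if $s\geq a$. Combining this with the median formula, $\tau_k\geq t$ holds precisely when $\lfloor(\nu_k-1)/2\rfloor\geq a$, and since $a$ is an integer this is equivalent to $(\nu_k-1)/2\geq a$, that is $\nu_k>2a=2\#(N_k\cap\Delta_t)$, which is $(1)$. I expect the only delicate point to be justifying that $\iota$ is an order-reversing involution and that the median formula for $\tau_k$ survives the possible fixed point (when $\nu_k$ is odd and $\lambda_k/2\in\Lambda$); both are immediate once the order-compatibility of $\oplus$ is invoked, and the counting identity $\nu_k=\#(N_k\cap\Delta_t)+\#(N_k\cap\Sigma_t)$ then makes the equivalence transparent.
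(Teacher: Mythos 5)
Your proof is correct, but it takes a genuinely different route from the paper's. The paper deduces $(1)\Leftrightarrow(2)$ by recycling the vote-counting decomposition from the proof of Lemma~\ref{lemma:condicionspercalcularsindromes}: with $A=\{i\in N_k: i,k\ominus i\in\Delta_t\}$ and $D=\{i\in N_k: i,k\ominus i\in\Sigma_t\}$, condition $(1)$ is equivalent to $\#D>\#A$, which gives $(2)$ since $\#D>0$; conversely $(2)$ forces $\#A=0$ because $\min(\Sigma_t\oplus\Sigma_t)=t\oplus t>(t-1)\oplus(t-1)=\max(\Delta_t\oplus\Delta_t)$, so the two sets $\Sigma_t\oplus\Sigma_t$ and $\Delta_t\oplus\Delta_t$ are disjoint. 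The equivalence $(2)\Leftrightarrow(3)$ is then dismissed as straightforward (it is the part you also handle routinely). You instead make $(1)\Leftrightarrow(3)$ the core step, via the observation that $j\mapsto k\ominus j$ is an order-reversing involution of $N_k$, so that writing $N_k=\{j_0<\dots<j_{\nu_k-1}\}$ one gets the median formula $\tau_k=j_{\lfloor(\nu_k-1)/2\rfloor}$, and $N_k\cap\Delta_t$ is an initial segment of $N_k$, which makes the inequality $\nu_k>2\#(N_k\cap\Delta_t)$ equivalent to $\tau_k\geq t$ by pure counting. What the paper's route buys is economy in context: it reuses machinery already set up for decoding and keeps the interpretation in terms of correct versus incorrect votes. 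What your route buys is a sharper structural fact: the identity $\tau_k=N_{k,\lfloor(\nu_k-1)/2\rfloor}$ is exactly the one the paper itself establishes and exploits later, in the proof of Theorem~\ref{t:nutau}, so your argument in effect unifies this lemma with that later result and is self-contained (it does not lean on the analogy with the proof of Lemma~\ref{lemma:condicionspercalcularsindromes}). Your handling of the potential fixed point of the involution (when $\nu_k$ is odd and $\lambda_k/2\in\Lambda$) is also sound, since the comparison $\lambda_{j_s}\leq\lambda_k/2\Longleftrightarrow j_s\leq j_{\nu_k-1-s}$ holds with equality on both sides in that case.
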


\begin{proof}
Let $A=\{i\in N_k: i, k\ominus i\in \Delta_t\}$, 
$D=\{i\in N_k: i, k\ominus i\in \Sigma_t\}$.
By an argument analogous to that in the proof of 
Lemma~\ref{lemma:condicionspercalcularsindromes},
$\nu_k>2\#(N_k\cap\Delta_e)$ is equivalent to 
$\#D>\#A$.
If this inequality is satisfied then $\#D>0$ and so 
$k\in\Sigma_t\oplus\Sigma_t$.
On the other hand, $\min \Sigma_t\oplus\Sigma_t = t\oplus t>(t-1)\oplus(t-1)=\max \Delta_t\oplus\Delta_t$. So, $\Sigma_t\oplus\Sigma_t\cap \Delta_t\oplus\Delta_t=\emptyset$ and, if $k\in \Sigma_t\oplus\Sigma_t$ then $k\not\in\Delta_t\oplus\Delta_t$ and so $\#A=0$ implying $\#D>\#A$.

The equivalence of $k\in\Sigma_t\oplus\Sigma_t$ and $\tau_k\geq t$ is straightforward.
\end{proof}

The one-point codes whose set $W$ of parity checks is selected 
so that the orders outside $W$
satisfy the hypothesis of Lemma~\ref{lemma:condicionspercalcularsindromes_generic}
and $W$ is minimal with this property are called improved codes correcting generic errors.
They were defined in \cite{Bras:2003_AAECC,BrOS:2006_AAECC}.

\begin{definition}
Given a rational point $P$ of an
algebraic smooth curve ${\mathcal X}_F$ defined over ${\mathbb F}_q$
with Weierstrass semigroup $\Lambda$ and sequence $\nu$
with associated basis $z_0,z_1,\dots$
and given $n$ other different points $P_1,\dots,P_n$ of ${\mathcal X}_F$,
the associated \alert{improved code}
guaranteeing correction of $t$ generic errors
is defined as $$C_{\tilde R^*(t)}=<(z_i(P_1),\dots,z_i(P_n)):i\in \tilde R^*(t)>^\perp,$$
where $$\tilde R^*(t)=\{i\in{\mathbb N}_0:\tau_i<t\}.$$
\end{definition}

\subsubsection{Comparison of improved codes and classical codes correcting generic errors}
\label{section:comparewithclassical}

Classical evaluation codes are those codes for which the set of
parity checks corresponds to all the elements up to a given order.
Thus, the classical evaluation code with maximum dimension correcting
$t$ generic errors is
defined by the set of checks
$R^*(t)=\{i\in{\mathbb N}_0:i\leq m(t)\}$
where $m(t)=\max\{i\in{\mathbb N}_0:\tau_i<t\}$.
Then, by studying the 
monotonicity
of the $\tau$ sequence we can
compare $\widetilde{R}^*(t)$ and $R^*(t)$ and the associated codes.


It is easy to check that for the trivial numerical semigroup
one has $\tau_{2i}=\tau_{2i+1}=i$
for all $i\in{\mathbb N}_0$. That is, the $\tau$ sequence is
$$0,0,1,1,2,2,3,3,4,4,5,5,\dots$$
The next lemma determines the
$\tau$ sequence of all non-trivial ordinary semigroups.

\begin{lemma}
\label{lemma:tauordinary}
The non-trivial ordinary numerical semigroup with conductor $c$
has $\tau$ sequence given by
$$\tau_i=\left\{\begin{array}{ll}0&\mbox{if }i\leq c\\
\left\lfloor\frac{i-c+1}{2}\right\rfloor&\mbox{if }i> c\\
\end{array}\right.$$
\end{lemma}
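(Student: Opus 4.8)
The plan is to argue directly from the explicit enumeration of $\Lambda$ together with the two observations recorded just after the definition of the $\tau$ sequence: that $\tau_i$ is the largest $j\in N_i$ with $\lambda_j\leq\lambda_i/2$, and that $\tau_i=0$ exactly when $\lambda_i$ is $0$ or a generator. Since the ordinary semigroup is $\Lambda=\{0\}\cup\{n\in{\mathbb N}_0: n\geq c\}$, its enumeration is $\lambda_0=0$ and $\lambda_i=c+i-1$ for every $i\geq 1$; I would record this at the outset and use it throughout.

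For the first line of the formula I would determine the generators of $\Lambda$. The smallest positive non-gap is $c$, so the smallest sum of two positive non-gaps is $2c$; hence the positive non-gaps that are \emph{not} such sums are exactly $c,c+1,\dots,2c-1$, that is $\lambda_1,\dots,\lambda_c$. Together with $\lambda_0=0$ this shows that $\lambda_i$ is $0$ or a generator precisely when $0\leq i\leq c$. By the cited characterization, $\tau_i=0$ for all $i\leq c$, which settles the case $i\leq c$.

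For the second line ($i>c$, hence $\lambda_i=c+i-1\geq 2c$) I would compute $N_i$ explicitly. Always $0\in N_i$, and for $j\geq 1$ one has $\lambda_i-\lambda_j=i-j$, so membership $j\in N_i$ with $j\geq 1$ amounts to $i-j=0$ or $i-j\geq c$, i.e. $j=i$ or $j\leq i-c$. The size requirement $\lambda_j\leq\lambda_i/2$ rewrites for $j\geq 1$ as $2(c+j-1)\leq c+i-1$, that is $j\leq (i-c+1)/2$. The largest integer meeting this bound is $\left\lfloor\frac{i-c+1}{2}\right\rfloor$, and it is at least $1$ because $i\geq c+1$.

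The only point needing care is to confirm that this candidate actually lies in $N_i$, i.e. that the size bound forces the membership bound $j\leq i-c$. This follows from $\left\lfloor\frac{i-c+1}{2}\right\rfloor\leq\frac{i-c+1}{2}\leq i-c$, where the last inequality is equivalent to $i-c\geq 1$ and therefore holds for every $i>c$. Consequently the largest $j\in N_i$ with $\lambda_j\leq\lambda_i/2$ is exactly $\left\lfloor\frac{i-c+1}{2}\right\rfloor$, giving $\tau_i=\left\lfloor\frac{i-c+1}{2}\right\rfloor$ and completing the proof. I expect this compatibility check to be the main (though mild) obstacle, since it is the one step where the two defining constraints on $\tau_i$ must be reconciled.
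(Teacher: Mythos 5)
Your proof is correct and follows essentially the same route as the paper's: $\tau_i=0$ for $i\leq c$ because $\lambda_1,\dots,\lambda_c$ are precisely the generators, and for $i>c$ a direct computation with the enumeration $\lambda_i=c+i-1$ and the characterization of $\tau_i$ as the largest $j\in N_i$ with $\lambda_j\leq\lambda_i/2$. The only cosmetic difference is that the paper splits into cases according to the parity of $\lambda_i$ and computes $\lambda^{-1}$ of the appropriate half, whereas you treat both parities at once via the floor together with an explicit check that the candidate index lies in $N_i$; the two verifications amount to the same arithmetic.
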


\begin{proof}
Suppose that the numerical semigroup has enumeration $\lambda$.
On one hand,
$\lambda_1,\dots,\lambda_{c}$ are all generators
and thus $\tau_i=0$ for $i\leq c$.
For $i>c$, $\lambda_{i}=c+i-1\geq 2c$. So,
if $\lambda_i$ is even
(which is equivalent to $c+i$ being
odd)
then
$\tau_i=\lambda^{-1}(\frac{\lambda_{i}}{2})=\frac{c+i-1}{2}-c+1=\frac{i-c+1}{2}=
\lfloor\frac{i-c+1}{2}\rfloor$.
If $\lambda_i$ is odd
(which is equivalent to either both $c$ and $i$ being
even or being odd)
then
$\tau_i=\lambda^{-1}(\frac{\lambda_{i}-1}{2})=\frac{c+i-2}{2}-c+1=\frac{i-c}{2}=
\lfloor\frac{i-c+1}{2}\rfloor$.
\end{proof}

\begin{remark}
\label{remark:curiousbehavior}
The formula in Lemma~\ref{lemma:tauordinary} 
can be reformulated as
$\tau_j=0$ for all $j\leq c$ and, for all $i\geq 0$,
$\tau_{c+2i+1}=\tau_{c+2i+2}=i+1$.
\end{remark}

The next lemma gives, for non-ordinary semigroups,
the smallest index $m$ for which
$\tau$ is non-decreasing from $\tau_m$ on.
We will use the notation $\lfloor a\rfloor_\Lambda$
to denote the semigroup floor of a non-negative integer
$a$, that is, the largest non-gap of $\Lambda$
which is at most $a$.

\begin{lemma}
\label{lemma:trobam}
Let $\Lambda$ be a non-ordinary semigroup with dominant $d$
and let $m=\lambda^{-1}(2d)$, then
\begin{enumerate}
\item $\tau_m=c-g-1>\tau_{m+1}$,
\item $\tau_i<c-g-1$ for all $i<m$,
\item $\tau_{i}\leq\tau_{i+1}$ for all $i>m.$
\end{enumerate}
\end{lemma}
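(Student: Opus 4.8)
The plan is to work directly from the description, recorded just after the definition of the $\tau$ sequence, that $\tau_i$ is the largest index $j\in N_i$ with $\lambda_j\leq\lambda_i/2$, together with elementary facts about the conductor and the dominant. First I would record two preliminaries. Since $\Lambda$ is non-ordinary, Remark~\ref{remark: c'+d geq c} gives $c'+d\geq c$, and as $c'\leq d$ this yields $2d\geq c$; hence $\lambda_m=2d\geq c$, so that $m=2d-g$ and $\lambda_{m+1}=2d+1$. Moreover $d=\lambda_{c-g-1}$ is the largest non-gap below $c$, so $d\leq c-2$ (because $c-1\notin\Lambda$), and every non-gap strictly larger than $d$ is automatically at least $c$, since the non-gaps are $\{0,\lambda_1,\dots,d\}\cup\{c,c+1,\dots\}$.

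For item~1, since $\lambda_m/2=d\in\Lambda$, the observation following the $\tau$-definition gives at once $\tau_m=\lambda^{-1}(d)=c-g-1$. For the strict inequality $\tau_{m+1}<c-g-1$, I would note that any candidate $j$ for $\tau_{m+1}$ must satisfy $\lambda_j\leq\lambda_{m+1}/2=(2d+1)/2$, i.e.\ $\lambda_j\leq d$; the largest such index is $c-g-1$, attained at $\lambda_j=d$. But $c-g-1\notin N_{m+1}$, because $\lambda_{m+1}-d=d+1\leq c-1$ is a gap and so $d+1\notin\Lambda$. Hence every admissible $j$ has $\lambda_j<d$, giving $\tau_{m+1}\leq c-g-2<c-g-1$.

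Item~2 is the same size estimate run below $m$: for $i<m$ we have $\lambda_i<2d$, so any candidate $j$ for $\tau_i$ obeys $\lambda_j\leq\lambda_i/2<d$, forcing $j<\lambda^{-1}(d)=c-g-1$, whence $\tau_i<c-g-1$.

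The heart of the argument is item~3, the monotonicity for $i>m$, and here my plan is to show that the index $j:=\tau_i$ stays an admissible candidate for $\tau_{i+1}$, which forces $\tau_{i+1}\geq\tau_i$. The size condition is immediate, since $\lambda_j\leq\lambda_i/2<\lambda_{i+1}/2$. For the membership $j\in N_{i+1}$ I would argue: from $\lambda_j\leq\lambda_i/2$ one gets $\lambda_i-\lambda_j\geq\lambda_i/2>d$, using $\lambda_i>2d$ for $i>m$; being a non-gap exceeding $d$, it must satisfy $\lambda_i-\lambda_j\geq c$. Since $\lambda_i\geq c$, consecutive integers are non-gaps, so $\lambda_{i+1}=\lambda_i+1$ and therefore $\lambda_{i+1}-\lambda_j=(\lambda_i-\lambda_j)+1\geq c+1\in\Lambda$. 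Thus $j\in N_{i+1}$, the index $j$ is admissible for $\tau_{i+1}$, and $\tau_{i+1}\geq\tau_i$. I expect controlling $\lambda_{i+1}-\lambda_j$ to be the only delicate point; the key making it work is precisely the threshold $\lambda_i>2d$, which guarantees the complementary summand $\lambda_i-\lambda_j$ has already passed the conductor, so that adding one keeps it inside $\Lambda$.
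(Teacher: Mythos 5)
Your proof is correct. Items 1 and 2 follow the same lines as the paper's own proof (the preliminary $2d\geq c$, the identity $\tau_m=\lambda^{-1}(\lambda_m/2)=\lambda^{-1}(d)$, and the fact that the gap $d+1$ prevents the index $c-g-1$ from being admissible at $m+1$), but your argument for item 3 takes a genuinely different route. The paper observes that for $i>m$ any decomposition $\lambda_j+\lambda_k=\lambda_i$ with $j\leq k\leq i$ forces $\lambda_k\geq c$, and from this it asserts the closed formula $\tau_i=\lambda^{-1}(\lfloor\lambda_i-c\rfloor_\Lambda)$; monotonicity then follows because $\lambda^{-1}$ and $\lfloor\cdot\rfloor_\Lambda$ are non-decreasing. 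You instead prove that the maximizer $j=\tau_i$ remains admissible at $i+1$: the size condition $\lambda_j\leq\lambda_{i+1}/2$ is immediate, and membership $j\in N_{i+1}$ holds because $\lambda_i-\lambda_j$ is a non-gap exceeding $d$, hence at least $c$, so $\lambda_{i+1}-\lambda_j=(\lambda_i-\lambda_j)+1\geq c+1\in\Lambda$. This is not only formula-free, it is also more robust: the paper's closed formula is in fact incorrect as stated, because it ignores the cap $\lambda_j\leq\lambda_i/2$ built into the definition of $\tau$. For instance, for $\Lambda=\langle 4,5\rangle$ (so $c=12$, $d=10$, $m=14$) and $i=19$, one has $\lambda_{19}=25$ and the formula gives $\lambda^{-1}(\lfloor 25-12\rfloor_\Lambda)=\lambda^{-1}(13)=7$, whereas the true value, as recorded in the paper's own Table~\ref{table}, is $\tau_{19}=6$. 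The paper's strategy is repairable---the corrected formula $\tau_i=\lambda^{-1}\left(\left\lfloor\min\{\lambda_i-c,\lambda_i/2\}\right\rfloor_\Lambda\right)$ is still a composition of non-decreasing maps, so the lemma itself is unharmed---but your maximizer-stability argument sidesteps the pitfall entirely and is the cleaner of the two.
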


\begin{proof}
For statement 1 notice that both $2d$ and $2d+1$ belong to $\Lambda$
because they must be larger than
the conductor. Furthermore,
$\tau_{\lambda^{-1}(2d)}=\lambda^{-1}(d)=c-g-1$
while $\tau_{\lambda^{-1}(2d+1)}=\tau_{\lambda^{-1}(2d)+1}<\lambda^{-1}(d)$
because $d+1\not\in\Lambda$.

Statement 2 follows from the fact that if $\lambda_i<2d$
then $\tau_i<\lambda^{-1}(d)=c-g-1$.

For statement 3 suppose that $i>m$.
Notice that $2d$ is the largest non-gap that can be written as a
sum of two non-gaps both of them smaller than the conductor $c$.
Then if
$j\leq k\leq i$ and $\lambda_j+\lambda_k=\lambda_i$
it must be $\lambda_k\geq c$ and so
$\tau_i=\lambda^{-1}(\lfloor\lambda_i-c\rfloor_\Lambda)$.
Since both $\lambda^{-1}$ and $\lfloor\cdot\rfloor_\Lambda$
are non-decreasing, so is $\tau_i$ for $i>m$.
\end{proof}

\begin{corollary}
\label{corollary:charordinary}
The only numerical semigroups for which the $\tau$ sequence is non-decreasing
are ordinary semigroups.
\end{corollary}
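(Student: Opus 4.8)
The plan is to dispatch this as an immediate corollary of the two preceding lemmas, treating the ordinary and non-ordinary cases separately. The content splits cleanly along the ordinary/non-ordinary divide, so I would prove each implication of the biconditional in turn.

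First I would verify that every ordinary semigroup has a non-decreasing $\tau$ sequence. For the trivial semigroup ${\mathbb N}_0$ the sequence is $0,0,1,1,2,2,\dots$, which is visibly non-decreasing. For a non-trivial ordinary semigroup with conductor $c$, I would simply invoke Lemma~\ref{lemma:tauordinary}: it gives $\tau_i=0$ for $i\leq c$ and $\tau_i=\lfloor (i-c+1)/2\rfloor$ for $i>c$. Since $\lfloor\cdot\rfloor$ applied to a non-decreasing function is non-decreasing, and since the value at $i=c$ is $0$ while the value just after $c$ is $\geq 0$, the full sequence is non-decreasing. This settles one direction: ordinary $\Longrightarrow$ $\tau$ non-decreasing.

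For the converse I would argue by contrapositive, and this is where the real work has already been done in Lemma~\ref{lemma:trobam}. If $\Lambda$ is non-ordinary with dominant $d$, set $m=\lambda^{-1}(2d)$; then statement 1 of that lemma asserts $\tau_m=c-g-1>\tau_{m+1}$, so the $\tau$ sequence strictly decreases at the index $m$ and is therefore not non-decreasing. Equivalently, a non-decreasing $\tau$ sequence forces $\Lambda$ to be ordinary, which is exactly the statement to be proved.

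I do not expect any genuine obstacle in assembling the corollary itself; it is a one-line combination of Lemma~\ref{lemma:tauordinary} (for the ordinary case) and Lemma~\ref{lemma:trobam}(1) (for the non-ordinary case). The substantive point, already established upstream, is the strict drop $\tau_{\lambda^{-1}(2d)}>\tau_{\lambda^{-1}(2d)+1}$, which rests on the observation that $2d$ is the largest non-gap expressible as a sum of two non-gaps both smaller than $c$: this pins $\tau_{\lambda^{-1}(2d)}=\lambda^{-1}(d)$, whereas $\tau_{\lambda^{-1}(2d)+1}$ must fall strictly below $\lambda^{-1}(d)$ because $d+1$ is a gap. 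With that hard fact in hand, the corollary requires only the bookkeeping of the two cases above.
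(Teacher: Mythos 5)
Your proposal is correct and matches the paper's approach: the paper states this as an immediate corollary of Lemma~\ref{lemma:tauordinary} (ordinary semigroups, including the trivial one, have non-decreasing $\tau$) and Lemma~\ref{lemma:trobam}(1) (non-ordinary semigroups have the strict drop $\tau_{\lambda^{-1}(2d)}>\tau_{\lambda^{-1}(2d)+1}$), giving no further argument. Your two-case assembly is exactly the intended proof.
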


A direct consequence of Corollary~\ref{corollary:charordinary} is that
the classical code determined by $R^*(t)$ is always worse
than the improved code determined by $\widetilde{R}^*(t)$
at least for one value of $t$ unless the corresponding
numerical semigroup is ordinary.
From Lemma~\ref{lemma:trobam} we can derive that $\widetilde{R}^*(t)$
and $R^*(t)$ coincide from a certain point and we can find this point.
We summarize the results of this section in the next Corollary.

\begin{corollary}
\begin{enumerate}
\item $\widetilde{R}^*(t)\subseteq R^*(t)$ for all $t\in{\mathbb N}_0$.
\item $\widetilde{R}^*(t)= R^*(t)$ for all $t\geq c-g$.
\item $\widetilde{R}^*(t)= R^*(t)$ for all $t\in{\mathbb N}_0$
if and only if the associated numerical semigroup is ordinary.
\end{enumerate}
\end{corollary}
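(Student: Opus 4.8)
The plan is to reduce all three parts to a single structural observation about the $\tau$ sequence and then read off the answer from the monotonicity results already established. First I would dispose of part~1, which is immediate from the definitions: if $i\in\widetilde{R}^*(t)$ then $\tau_i<t$, so $i\le m(t)=\max\{j:\tau_j<t\}$ and hence $i\in R^*(t)$. The reformulation I want to exploit afterwards is that, since $\widetilde{R}^*(t)\subseteq R^*(t)$ always holds and $R^*(t)=\{0,1,\dots,m(t)\}$ is by construction downward closed, one has $\widetilde{R}^*(t)=R^*(t)$ \emph{if and only if} $\widetilde{R}^*(t)=\{i:\tau_i<t\}$ is itself downward closed, i.e.\ contains every $i\le m(t)$. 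Thus everything comes down to deciding for which $t$ the sublevel set $\{i:\tau_i<t\}$ is an initial segment of ${\mathbb N}_0$, and this is exactly what the monotonicity of $\tau$ controls.

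For part~2 I would treat the ordinary and non-ordinary cases separately. If $\Lambda$ is ordinary then $\tau$ is non-decreasing by Corollary~\ref{corollary:charordinary}, so $\{i:\tau_i<t\}$ is an initial segment for every $t$ and equality holds. If $\Lambda$ is non-ordinary, let $m=\lambda^{-1}(2d)$ as in Lemma~\ref{lemma:trobam}. That lemma gives $\tau_i\le c-g-1$ for all $i\le m$ (strict for $i<m$, with equality at $m$) together with $\tau_i\le\tau_{i+1}$ for all $i>m$. Hence for $t\ge c-g$ we have $\tau_i\le c-g-1<t$ for every $i\le m$, so all of $0,\dots,m$ lie in $\widetilde{R}^*(t)$, while above $m$ the non-decreasing behaviour makes $\{i>m:\tau_i<t\}$ an initial segment of $\{m+1,m+2,\dots\}$. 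Concatenating, $\widetilde{R}^*(t)$ is downward closed, and the reformulation of the first paragraph yields $\widetilde{R}^*(t)=R^*(t)$.

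For part~3 the \emph{if} direction is the ordinary case just handled: non-decreasingness of $\tau$ (Corollary~\ref{corollary:charordinary}) makes $\{i:\tau_i<t\}$ downward closed for every $t$. For the \emph{only if} direction I would argue by contraposition, assuming $\Lambda$ non-ordinary and exhibiting a single $t$ violating equality. The natural choice is $t=c-g-1$, which is a non-negative integer (indeed at least $1$, since a non-ordinary semigroup has at least two non-gaps below $c$, so $c-g\ge 2$). By Lemma~\ref{lemma:trobam} we have $\tau_m=c-g-1=t$ while $\tau_{m+1}<c-g-1=t$; thus $m\notin\widetilde{R}^*(t)$, whereas $m+1\in\{i:\tau_i<t\}$ forces $m(t)\ge m+1$ and hence $m\in R^*(t)$. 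Therefore $\widetilde{R}^*(t)\subsetneq R^*(t)$, completing the contrapositive.

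The only genuine content lies in part~2 and in the non-ordinary half of part~3, and there all the difficulty has been packaged into Lemma~\ref{lemma:trobam}: once one knows that the unique descent of $\tau$ occurs at $m=\lambda^{-1}(2d)$ with peak value $c-g-1$, the sublevel sets become transparent. The point to be careful about is the boundary bookkeeping---the fact that $\tau_m=c-g-1$ holds with \emph{equality} (not strict) is precisely why $t=c-g-1$ is the threshold separating parts~2 and~3, and why the hypothesis $t\ge c-g$ (strictly above the peak) is what is needed to guarantee downward closure in part~2.
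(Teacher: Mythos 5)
Your proof is correct and follows essentially the same route as the paper: part 1 from the definitions, part 2 from Lemma~\ref{lemma:trobam} (peak value $c-g-1$ at $m=\lambda^{-1}(2d)$, strictly smaller values before it, non-decreasing values after it), and part 3 from the fact that non-decreasing $\tau$ characterizes ordinary semigroups --- your explicit witness $t=c-g-1$ merely unfolds Corollary~\ref{corollary:charordinary}, which the paper cites instead. The additional bookkeeping you supply (the downward-closure reformulation and the observation that $c-g\geq 2$ for non-ordinary semigroups) just makes explicit what the paper's terse proof leaves implicit.
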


\begin{proof}
Statement 1 is a consequence of the definition of $R^*(t)$.
Statement 2 is clear if the associated semigroup is ordinary.
Otherwise it follows from the fact proved in Lemma~\ref{lemma:trobam}
that the largest value of $\tau_i$
before it starts being non-decreasing is precisely $c-g-1$
and that before that all values of $\tau_i$ are smaller than $c-g-1$.
Statement 3 is a consequence of Corollary~\ref{corollary:charordinary}.
\end{proof}

\subsubsection{Comparison of improved codes correcting generic errors and Feng--Rao improved codes}
\label{section:comparewithFR}

In next theorem we compare $\tau_i$ with $\lfloor\frac{\nu_i-1}{2}\rfloor$
and this will give a new characterization of Arf semigroups.
Recall that $t\leq \lfloor\frac{\nu_i-1}{2}\rfloor$ 
guarantees the computation of syndromes of order $i$ when performing majority voting
(Lemma~\ref{lemma:condicionspercalcularsindromes}).

\begin{theorem}
\label{t:nutau}
Let $\Lambda$ be a numerical semigroup with conductor $c$, genus $g$,
and associated sequences $\tau$ and $\nu$. Then
\begin{enumerate}
\item
$\tau_i\geq\lfloor\frac{\nu_i-1}{2}\rfloor$ for all $i\in{\mathbb N}_0$,
\item
$\tau_i = \lfloor\frac{\nu_i-1}{2}\rfloor$ for all $i\geq 2c-g-1$,
\item
$\tau_i = \lfloor\frac{\nu_i-1}{2}\rfloor$ for all $i\in{\mathbb N}_0$
if and only if $\Lambda$ is Arf.
\end{enumerate}
\end{theorem}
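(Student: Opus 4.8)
The theorem compares the two quantities $\tau_i$ and $\lfloor\frac{\nu_i-1}{2}\rfloor$, which both arise as thresholds in the decoding analysis: $\tau_i\ge t$ is equivalent to $\nu_i>2\#(N_i\cap\Delta_t)$ for generic errors (Lemma \ref{lemma:condicionspercalcularsindromes_generic}), while $t\le\lfloor\frac{\nu_i-1}{2}\rfloor$ implies $\nu_i>2\#(N_i\cap\Delta_e)$ for arbitrary errors (Lemma \ref{lemma: condicio mes feble}). Since generic errors are the ``easiest'' configuration, one expects generic correction to be no harder, which is precisely the direction of inequality in statement~1. My plan is to exploit exactly this: the set $\Delta_t=\{0,\dots,t-1\}$ is the specific $\preceq$-closed footprint realizing $t$ errors most favorably.

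\emph{Statement 1.} I would fix $i$ and set $t=\lfloor\frac{\nu_i-1}{2}\rfloor$, so that $\nu_i\ge 2t+1>2t$. Taking the generic footprint $\Delta_t$, we have $\#(N_i\cap\Delta_t)\le\#\Delta_t=t$, hence $\nu_i>2t\ge 2\#(N_i\cap\Delta_t)$. By the equivalence (1)$\Leftrightarrow$(3) of Lemma \ref{lemma:condicionspercalcularsindromes_generic}, this gives $\tau_i\ge t=\lfloor\frac{\nu_i-1}{2}\rfloor$. The only subtlety to check is that $N_i\cap\Delta_t$ really has at most $t$ elements, which is immediate since $\Delta_t$ has exactly $t$ elements.

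\emph{Statement 2.} For $i\ge 2c-g-1$ Lemma \ref{lemma:nu} gives the closed form $\nu_i=i-g+1$, so $\lfloor\frac{\nu_i-1}{2}\rfloor=\lfloor\frac{i-g}{2}\rfloor$. On the other side, for such large $i$ one has $\lambda_i\ge 2c-1$, and I would compute $\tau_i$ directly from its definition as the largest $j$ with $\lambda_j\le\lambda_i/2$: since $\lambda_i=i+g\ge 2c-1$, the value $\lambda_i/2$ lies well above the conductor, so both $\lfloor\lambda_i/2\rfloor$ and everything near it are non-gaps, and $\tau_i=\lambda^{-1}(\lfloor\lambda_i/2\rfloor)=\lfloor\frac{i-g}{2}\rfloor$. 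Matching the two expressions (carefully tracking the floor according to the parity of $\lambda_i$, exactly as in the proof of Lemma \ref{lemma:tauordinary}) establishes equality.

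\emph{Statement 3.} The forward direction should follow by showing that equality for all $i$ forces the Arf condition; I would combine statement~1 with Lemma \ref{lemma: nu non-decreasing implica Arf}-style reasoning, arguing that pointwise equality pins down $\#(N_i\cap\Delta_t)=\#\Delta_t$ (so $\Delta_t\subseteq N_i$ at the critical indices $i=\lambda^{-1}(2\lambda_t)$), which is precisely the inclusion $\{j:j\le t\}\subseteq N_{\lambda^{-1}(2\lambda_t)}$ that Lemma \ref{lemma: charactarf 2j-i} recognizes as the Arf condition. For the converse, assuming $\Lambda$ is Arf I would show that the generic footprint is the worst case: the Arf property forces $\#(N_i\cap\Delta_e)$ to be maximized by $\Delta_e=\Delta_t$, so the threshold in Lemma \ref{lemma:condicionspercalcularsindromes_generic} coincides with that in Lemma \ref{lemma: condicio mes feble}, collapsing the inequality in statement~1 to an equality. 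The main obstacle is statement~3: carefully extracting the Arf condition from pointwise equality of the two thresholds, and conversely verifying that under Arf no $\preceq$-closed set of size $t$ can beat $\Delta_t$ inside $N_i$, will require the structural characterizations of Arf semigroups (Lemmas \ref{lemma: arf te els non-gaps separats} and \ref{lemma: charactarf 2j-i}) rather than a direct computation.
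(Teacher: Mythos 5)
Your proof of statement 1 is correct, and it takes a genuinely different route from the paper's: the paper lists $N_i$ increasingly as $N_{i,0}<\dots<N_{i,\nu_i-1}$, observes that $\tau_i=N_{i,\lfloor(\nu_i-1)/2\rfloor}$ (the ``middle element'' of $N_i$) and that $N_{i,j}\geq j$, whereas you feed the trivial bound $\#(N_i\cap\Delta_t)\leq t$ into the equivalence (1)$\Leftrightarrow$(3) of Lemma~\ref{lemma:condicionspercalcularsindromes_generic}. Both work; note, however, that the middle-element description is precisely the tool the paper reuses for statements 2 and 3, and by black-boxing statement 1 through the decoding lemma you deprive yourself of it where it is most needed.

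Statement 2 has a genuine flaw at the very first index the claim covers. For a non-trivial semigroup and $i=2c-g-1$ one has $\lambda_i=2c-1$, so $\lfloor\lambda_i/2\rfloor=c-1$, which is a gap by the definition of the conductor: $\lambda^{-1}(\lfloor\lambda_i/2\rfloor)$ is then undefined, and your assertion that $\lambda_i/2$ ``lies well above the conductor, so $\lfloor\lambda_i/2\rfloor$ and everything near it are non-gaps'' is false, since $\lambda_i/2=c-\frac12$. Parity tracking as in Lemma~\ref{lemma:tauordinary} does not rescue this case, because $(\lambda_i-1)/2=c-1$ as well. The equality does hold at this index, but by a different computation: $\tau_i=\lambda^{-1}(d)=c-g-1$, where $d$ is the dominant (one checks $2c-1-d\geq c$, so $d\in N_i$), while $\lfloor(\nu_i-1)/2\rfloor=\lfloor(2c-2g-1)/2\rfloor=c-g-1$. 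Your computation is valid for $i\geq 2c-g$, i.e.\ $\lambda_i\geq 2c$. The paper avoids the case split entirely by showing instead that every $j\leq\lfloor(\nu_i-1)/2\rfloor$ lies in $N_i$: by statement 1, $\lambda_j\leq\lambda_{\tau_i}\leq\lambda_i/2$, hence $\lambda_i-\lambda_j\geq c-\frac12$, hence $\geq c$.

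In statement 3 the direction ``equality $\Rightarrow$ Arf'' is workable as you outline it: equality at $i=\lambda^{-1}(2\lambda_t)$, where $\tau_i=t$, forces the $t$ elements of $N_i$ below its middle element to be exactly $\{0,\dots,t-1\}$, and Lemma~\ref{lemma: charactarf 2j-i} then gives Arf; this is essentially the paper's argument, up to the choice of characterization. The genuine gap is the converse, ``Arf $\Rightarrow$ equality'', which you never prove but only reduce to the slogan that under Arf no $\preceq$-closed set of size $t$ can beat $\Delta_t$ inside $N_i$. There are two problems. First, even granted, that maximality statement does not by itself yield $\tau_i\leq\lfloor(\nu_i-1)/2\rfloor$: you would additionally need that the initial segment $\{N_{i,0},\dots,N_{i,t-1}\}$ of $N_i$ is itself $\preceq$-closed (by transitivity of $\preceq$), so that maximality forces $\Delta_t\subseteq N_i$. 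Second, the slogan itself is left unproven. What is actually needed is a two-line computation that your plan never produces: by definition of $\tau_i$ there exists $k$ with $\tau_i\leq k\leq i$ and $\lambda_{\tau_i}+\lambda_k=\lambda_i$; then for every $j\leq\tau_i$ the Arf property gives $\lambda_i-\lambda_j=\lambda_k+\lambda_{\tau_i}-\lambda_j\in\Lambda$, so $\{0,\dots,\tau_i\}\subseteq N_i$, whence $\tau_i$ occupies position $\tau_i$ in $N_i$ and equality with $\lfloor(\nu_i-1)/2\rfloor$ follows from statement 1.
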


\begin{proof}
Let $\lambda$ be the enumeration of $\Lambda$.
\begin{enumerate}
\item 
Suppose that the elements in $N_i$ are ordered $N_{i,0}< N_{i,1}< N_{i,2}
<\dots< N_{i,\nu_i-1}$. On one hand $\tau_i=N_{i,\lfloor\frac{\nu_i-1}{2}\rfloor}$.
On the other hand $N_{i,j}\geq j$ and this finishes the proof of the first statement.

\item
The result is obvious for the trivial semigroup. Thus we can assume that
$c\geq g+1$.
Notice that
$\tau_i=N_{i,\lfloor\frac{\nu_i-1}{2}\rfloor}=\lfloor\frac{\nu_i-1}{2}\rfloor$
if and only if all integers less than or equal to $\lfloor\frac{\nu_i-1}{2}\rfloor$
belong to $N_i$.
Now let us prove that if $i\geq 2c-g-1$ then
all integers less than or equal to $\lfloor\frac{\nu_i-1}{2}\rfloor$
belong to $N_i$.
Indeed, if $j\leq\lfloor\frac{\nu_i-1}{2}\rfloor$ then
$\lambda_j\leq \lambda_i/2$
and $\lambda_i-\lambda_j\geq \lambda_i-\lambda_i/2=\lambda_i/2\geq c-1/2$.
Since $\lambda_i-\lambda_j\in{\mathbb N}_0$ this means that
$\lambda_i-\lambda_j\geq c$ and so $\lambda_i-\lambda_j\in\Lambda$.

\item
Suppose that $\Lambda$ is Arf.
We want to show that for any non-negative integer $i$, all non-negative integers
less than or equal to $\lfloor\frac{\nu_i-1}{2}\rfloor$
belong to $N_i$.
By definition of $\tau_i$ there exists $k$
with $\tau_i\leq k\leq i$ and $\lambda_{\tau_i}+\lambda_{k}=\lambda_i$.
Now, if $j$ is a non-negative integer with $j\leq \lfloor\frac{\nu_i-1}{2}\rfloor$,
by statement 1 it also satisfies $j\leq \tau_i$. Then
$\lambda_i-\lambda_j=\lambda_{\tau_i}+\lambda_k-\lambda_j\in\Lambda$ by the
Arf property, and so $j\in N_i$.

On the other hand, suppose that
$\tau_i=\lfloor\frac{\nu_i-1}{2}\rfloor$
for all non-negative integer $i$.
This means that all integers less than or equal to $\tau_r$
belong to $N_r$ for any non-negative integer $r$.
If $i\geq j\geq k$ then $\tau_{\lambda^{-1}(\lambda_i+\lambda_j)}\geq j\geq k$
and by hypothesis $k\in N_{\lambda^{-1}(\lambda_i+\lambda_j)}$, which means that
$\lambda_i+\lambda_j-\lambda_k\in\Lambda$. This implies that $\Lambda$ is Arf.
\end{enumerate}
\end{proof}

Statement 1) of Lemma~\ref{lemma:precharacterization}
for the case when $i>0$ is a direct consequence of
Theorem~\ref{t:nutau} and Lemma~\ref{lemma:nu}.

Finally, Theorem~\ref{t:nutau}
together with Lemma~\ref{lemma:precharacterization}
has the next corollary.
Different versions of this result appeared in
\cite{Bras:2003_AAECC,BrOS:2006_AAECC,Bras:2009_DCC}.
The importance of the result is that it shows that
the improved codes correcting generic errors 
do always require at most
as many checks as 
the Feng--Rao improved codes correcting any kind of errors.
It also
states conditions under which their redundancies are equal and
characterizes Arf semigroups as the unique semigroups
for which there is no improvement.

\begin{corollary}
\begin{enumerate}
\item $\widetilde{R}^*(t)\subseteq \widetilde{R}(t)$ for all $t\in{\mathbb N}_0$.
\item $\widetilde{R}^*(t)= \widetilde{R}(t)$ for all $t\geq c-g$.
\item $\widetilde{R}^*(t)= \widetilde{R}(t)$ for all $t\in{\mathbb N}_0$
if and only if the associated numerical semigroup is Arf.
\end{enumerate}
\end{corollary}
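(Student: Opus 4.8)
The plan is to reduce all three items to the comparison between $\tau_i$ and $\lfloor\frac{\nu_i-1}{2}\rfloor$ carried out in Theorem~\ref{t:nutau}. First I would record the elementary equivalence that, for non-negative integers $\nu_i$ and $t$, one has $\nu_i<2t+1$ if and only if $\lfloor\frac{\nu_i-1}{2}\rfloor<t$ (both conditions being equivalent to $\nu_i\leq 2t$). Consequently the two index sets rewrite as $\widetilde R(t)=\{i\in{\mathbb N}_0:\lfloor\frac{\nu_i-1}{2}\rfloor<t\}$ and $\widetilde R^*(t)=\{i\in{\mathbb N}_0:\tau_i<t\}$, so that membership in each is governed by comparing the threshold $t$ against $\lfloor\frac{\nu_i-1}{2}\rfloor$ and against $\tau_i$ respectively.

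With this reformulation, statement 1 is immediate: by part 1 of Theorem~\ref{t:nutau} we have $\tau_i\geq\lfloor\frac{\nu_i-1}{2}\rfloor$, so $\tau_i<t$ forces $\lfloor\frac{\nu_i-1}{2}\rfloor<t$, giving $\widetilde R^*(t)\subseteq\widetilde R(t)$. Statement 3 I would handle directly in both directions. If $\Lambda$ is Arf, part 3 of Theorem~\ref{t:nutau} gives $\tau_i=\lfloor\frac{\nu_i-1}{2}\rfloor$ for every $i$, whence the two membership conditions coincide for every $t$ and $\widetilde R^*(t)=\widetilde R(t)$. For the converse I would argue by contraposition: if $\Lambda$ is not Arf, Theorem~\ref{t:nutau} provides an index $i$ with $\tau_i>\lfloor\frac{\nu_i-1}{2}\rfloor$; setting $a=\lfloor\frac{\nu_i-1}{2}\rfloor$ and $t=a+1$, one checks that $i\in\widetilde R(t)$ while $i\notin\widetilde R^*(t)$, so the sets already differ for this single $t$.

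The real work is in statement 2, and this is the step I expect to be the main obstacle. Since statement 1 supplies one inclusion, it suffices to prove $\widetilde R(t)\subseteq\widetilde R^*(t)$ when $t\geq c-g$. The ordinary case is subsumed by statement 3 (ordinary semigroups are Arf), so I may assume $\Lambda$ is non-ordinary. For large indices the two sets agree exactly: part 2 of Theorem~\ref{t:nutau} gives $\tau_i=\lfloor\frac{\nu_i-1}{2}\rfloor$ for all $i\geq 2c-g-1$, so such an $i$ lies in $\widetilde R(t)$ precisely when it lies in $\widetilde R^*(t)$. It therefore remains to handle the small indices $i<2c-g-1$, and here the key claim is that $\tau_i\leq c-g-1$ for every such $i$; once this is known, $t\geq c-g$ yields $\tau_i\leq c-g-1<t$, so every small index automatically lies in $\widetilde R^*(t)$ (and hence in $\widetilde R(t)$), completing the equality.

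To establish the bound $\tau_i\leq c-g-1$ for $i<2c-g-1$ I would lean on the fine description of the $\tau$ sequence in Lemma~\ref{lemma:trobam} and Lemma~\ref{lemma:precharacterization}. Writing $m=\lambda^{-1}(2d)$, Lemma~\ref{lemma:trobam} shows $\tau_i<c-g-1$ for $i<m$, then $\tau_m=c-g-1>\tau_{m+1}$, and $\tau$ non-decreasing for $i>m$; since $d\leq c-2$ we have $2d<2c-1$ and hence $m<\lambda^{-1}(2c-1)=2c-g-1$. On the other hand Lemma~\ref{lemma:precharacterization} in the case $i=0$ gives $\tau_{2c-g-2}=c-g-1$. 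Combining these, on the range $m<i\leq 2c-g-2$ the sequence $\tau$ is non-decreasing and reaches $c-g-1$ only at its right endpoint, so $\tau_i\leq c-g-1$ throughout; together with the values for $i\leq m$ this yields $\tau_i\leq c-g-1$ for all $i\leq 2c-g-2$, that is, for all $i<2c-g-1$, as required.
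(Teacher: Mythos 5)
Your proof is correct and follows essentially the same route as the paper: statements 1 and 3 read off directly from Theorem~\ref{t:nutau} after rewriting $\widetilde R(t)=\{i:\lfloor\frac{\nu_i-1}{2}\rfloor<t\}$, and statement 2 splits at the index $2c-g-1$, using part 2 of Theorem~\ref{t:nutau} for the large indices and Lemmas~\ref{lemma:precharacterization} and~\ref{lemma:trobam} to bound $\tau_i\leq c-g-1$ on the small ones. The only difference is that you spell out details the paper leaves implicit (the threshold equivalence, the separate treatment of the ordinary case, and the monotonicity argument pinning $\tau_i\leq\tau_{2c-g-2}=c-g-1$), all of which are handled correctly.
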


\begin{proof}
Statement 1. and 3. follow immediately from
Theorem~\ref{t:nutau}
and the fact that
$\widetilde{R}(t)=\{i\in{\mathbb N}_0:\left\lfloor\frac{\nu_i-1}{2}\right\rfloor<t\}$
and $\widetilde{R}^*(t)=\{i\in{\mathbb N}_0:\tau_i<t\}$.
For statement 2.,
we can use that for $i\geq 2c-g-1$,
$\tau_i=\lfloor\frac{\nu_i-1}{2}\rfloor$
(Theorem~\ref{t:nutau})
and that for $i\geq 2c-g-1$,
$\tau_i\geq c-g-1$ (Lemma~\ref{lemma:precharacterization}),
being $c-g-1$
the largest value of $\tau_j$
before it starts being non-decreasing (Lemma~\ref{lemma:trobam}).
\end{proof}

\section*{Further reading}

We tried to cite the specific bibliography related to each section 
within the text. Next we mention some more general references:
The book \cite{RoGa:llibre} has many results on numerical semigroups, including some of the problems presented in the first section of this chapter but also many others. The book \cite{RamirezAlfonsin} is also devoted to numerical semigroups from the perspective of the Frobenius' coin exchange problem.
Algebraic geometry codes have been widely explained in different books such as \cite{vLintvdGeer,Stichtenoth:AFFaC,Pretzel}. For one-point codes and also their relation with Weierstrass semigroups an important reference is the chapter \cite{HoLiPe:agc}.

\section*{Acknowledgments}

The author would like to thank 
Michael E. O'Sullivan, 
Ruud Pellikaan
and Pedro A. Garc\'\i a-S\'anchez
for many helpful discussions.
She would also like to thank all the coauthors of the papers involved in this chapter. 
They are, by order of appearence of the differet papers:
Michael E. O'Sullivan, Pedro A. Garc\'\i a-S\'anchez,
Anna de Mier, and Stanislav Bulygin.

This work was partly supported by
the Spanish Government through projects TIN2009-11689
``RIPUP'' and CONSOLIDER INGENIO 2010 CSD2007-00004 ``ARES'',
 and by the Government of Catalonia under grant 2009 SGR 1135.


\end{document}